\colorlet{darkgreen}{green!50!black}
\newtheorem{thm}{Theorem}
\newtheorem{prop}[thm]{Proposition}
\newtheorem{cor}[thm]{Corollary}
\newtheorem{deff}[thm]{Definition}
\newtheorem{lem}[thm]{Lemma}
\theoremstyle{definition}
\newtheorem*{remarque*}{Remarque}
\newtheorem*{remark*}{Remark}
\newtheorem{remark}[thm]{Remark}
\renewcommand{\geq}{\geqslant}
\renewcommand{\leq}{\leqslant}
\let\phi=\varphi
\let\epsilon=\varepsilon
\newcommand{\rems}[1]{\textcolor{black}{#1}}
\newcommand{\pii}{\rems{\bm{\pi}}}
\begin{document}

\begin{frontmatter}

\title{Asymptotic behavior of the occupancy density for obliquely reflected Brownian motion in a half-plane and Martin boundary}
\runtitle{Asymptotic behavior of the occupancy density for RBM in a half-plane}

\begin{aug}

  \author[A]{\fnms{Philip A.} \snm{Ernst}\ead[label=e1]{philip.ernst@rice.edu}} \and
  \author[B]{\fnms{Sandro} \snm{Franceschi}\ead[label=e2]{sandro.franceschi@universite-paris-saclay.fr}}

\address[A]{Department of Statistics,
Rice University
}

\address[B]{Laboratoire de Math\'ematiques d’Orsay,
Universit\'e Paris-Saclay
}

\end{aug}

\renewcommand{\abstractname}{}

\begin{abstract} \
Let $\pii$ be the occupancy density 
of an obliquely reflected Brownian motion in the half plane and let $(\rho,\alpha)$ be the polar coordinates of a point in the upper half plane. This work determines the exact asymptotic behavior of $\pii (\rho,\alpha)$ as $\rho\to \infty$ with $\alpha\in(0,\pi)$. We find explicit functions $a,b,c$ such that
$$
\pii (\rho,\alpha) \underset{\rho\to\infty}{\sim} a(\alpha) \rho^{b(\alpha)} e^{-c(\alpha)\rho}.
$$
This closes an open problem first stated by Professor J. Michael Harrison in August 2013.
We also compute the exact asymptotics for the tail distribution of the boundary occupancy measure and we obtain an explicit integral expression for $\pii$. {We conclude by finding the Martin boundary of the process and giving} all of the corresponding harmonic functions satisfying an oblique Neumann boundary problem.
\end{abstract}

\begin{keyword}[class=MSC]
\kwd[Primary ]{60J60, 60K25}
\kwd[; secondary ]{30D05, 90B22}\\
\indent \textit{Keywords and phrases}: Occupancy density; Green's function; Obliquely reflected Brownian motion in a half-plane; Stationary distribution; Exact Asymptotics; Martin boundary; Laplace transform; Saddle-point method.
\end{keyword}

\end{frontmatter}

\tableofcontents

\section{Introduction}\label{sec:intro}

\noindent In 2013, Professor J. Michael Harrison raised a fundamental question regarding the asymptotic behavior of the occupancy density for reflected Brownian motion (RBM) in the half plane \cite{Web}. We shall state Harrison's problem on the following page after introducing the necessary background for the statement of the problem. The purpose of the present paper is to close this open problem. \\
\indent  Let 
$B(t)+\mu t $ be a two-dimensional Brownian motion with identity covariance matrix, drift vector $\mu=(\mu_1,\mu_2)$, and initial state $(0,0)$.\footnote{Appendix~\ref{appendix:generalization} generalize our results to any covariance matrix and to any starting point.} Let $R=(r,1)$ be \rems{a} reflection vector and, for all $t\geqslant 0$, let
$$
\ell(t):= - \underset{0 \leqslant s \leqslant t}{\inf} {(B_2 (s) +\mu_2 s)} \,\,\,\, \text{ and }\,\,\,\,
Z(t):=B(t)+\mu t+ R \ell(t) \in \mathbb{R}\times \mathbb{R}_+.
$$
It is said that $(Z, \ell)$ solves the Skorokhod problem for $B(t)+\mu t$ with respect to upper half-plane and to $R$. 
The process $Z$ is a reflected Brownian motion (RBM) in the upper half-plane and $\ell$ is the  local time of $Z$ on the abscissa.
We shall 
assume throughout that
\begin{equation}
\mu_1+r\mu_2^- <0,
\label{eq:conditiontrans}
\footnote{The symmetrical case $\mu_1+r\mu_2^- >0$ ensures that $Z_1(t)\to \infty$. It can be treated in the same way.}
\end{equation}
ensuring that $Z_1(t)\to -\infty$ as $t\to\infty$ (see Appendix \ref{AppendixB}, Lemma~\ref{lem:transient}). Throughout this work, our primary concern shall be the case where
\begin{equation} 
\mu_2< 0.
\label{eq:conddriftneg} \footnote{See Appendix~\ref{subsec:driftpos} for the case $\mu_2\geqslant 0$.}
\end{equation}
Under \eqref{eq:conddriftneg}, $\ell(t)\to \infty$, $\mu_2^-=-\mu_2$, and \eqref{eq:conditiontrans} is equivalent to $r\mu_2-\mu_1>0$.  Figure \ref{fig:rebondderive} below gives  two examples of parameters satisfying \eqref{eq:conditiontrans} and \eqref{eq:conddriftneg}.

\begin{figure}[hbtp]
\centering
\includegraphics[scale=0.5]{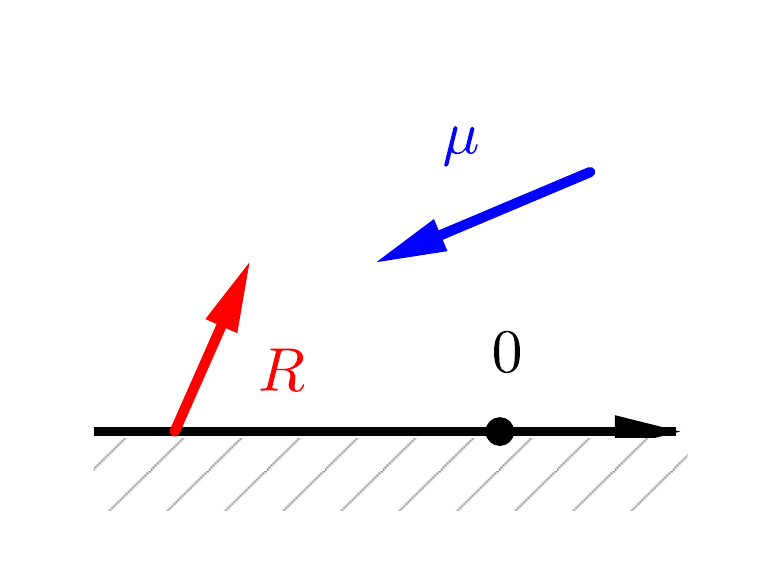}
\includegraphics[scale=0.5]{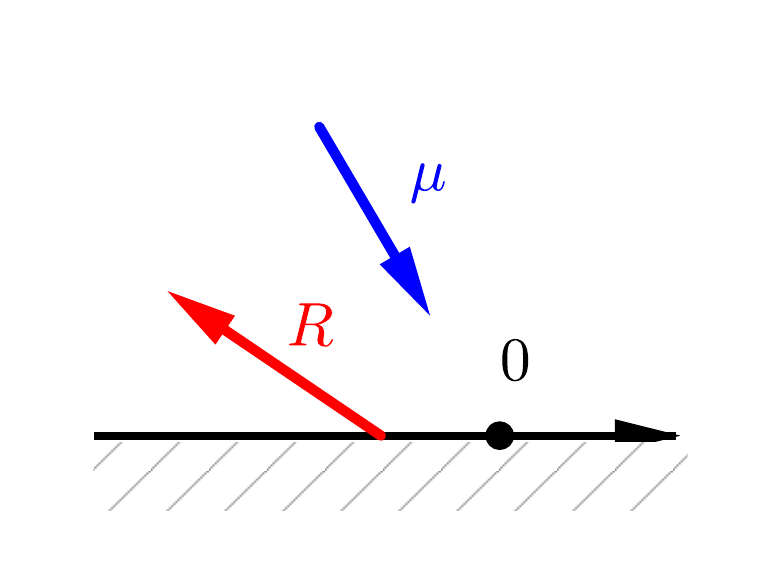}
\caption{Two examples of parameters satisfying the inequality in \eqref{eq:conditiontrans} and \eqref{eq:conddriftneg}. $\mu$ is the drift and $R$ is the reflection vector}
\label{fig:rebondderive}

\end{figure}

Let $p_t(z)$ denote the density function of the random vector $Z(t)$ at the point $z$ in the upper half-plane. For any bounded set $A$, define
\begin{equation}
\pii(z):=\int_0^\infty p_t (z) \mathrm{d} t,
\label{def:greenfunction}
\end{equation}
and
\begin{equation*}
\Pi(A):=\int_A \pii(z) \mathrm{d} z = \mathbb{E} \left[ \int_0^\infty \mathbf{1}_{A} (Z(t)) \mathrm{d}t \right].
\end{equation*}
We call $\Pi$ the \textit{Green's measure} of the process $Z$ and $\pii$ the \textit{occupancy density} (alternatively, the Green's function) of the process $Z$. 
Let $(\rho,\alpha)$ be the polar coordinate representation of a point $z$ in the upper half-plane.
\, The \textit{occupancy measure} on the boundary (alternatively, the ``pushing measure'' or the ``Green's measure'') is defined as
$$\nu(A):=
\mathbb{E} \left[ \int_0^\infty \mathbf{1}_{A} (Z(t)) \mathrm{d}\ell(t) \right].
$$
Notice that $\ell$ increases only when $Z_2(t)=0$, which corresponds to the support of $\nu$ lying on the abscissa. Indeed, $\nu$ is \rems{a Borel} measure and has density with respect to Lebesgue measure on the abscissa (see Harrison and Williams \cite[\S 8]{harrison_brownian_1987}). In particular, let $\nu_1$ be the density such that $\nu(\mathrm{d} z)=\nu_1(z_1)\mathrm{d}z_1 \times \delta_0(\mathrm{d}z_2)$. 
\\
\indent With the above preparations now in hand, we now state Harrison's \textit{open problem}.\\\\
\noindent \textbf{Harrison's Problem \cite{Web}:} 
Determine the exact asymptotic behavior of $\pii(\rho,\alpha)$ with $\rho \rightarrow \infty$ and $\alpha$ fixed.\\\\ 
\indent Theorem  \ref{thm:main} of this paper closes this problem. In the process of finding the exact asymptotic behavior of $\pii(\rho,\alpha)$ with $\rho \rightarrow \infty$ and $\alpha$ fixed, we also determine the exact tail asymptotic behavior of the boundary occupancy measure $\nu$ (Proposition \ref{prop:asymptnu}) and an explicit integral expression for the occupancy density $\pii$ (Proposition~\ref{lem:inverselaplace}). {These asymptotics} lead us to explicitly determine all harmonic functions of the Martin compactification and to obtain the Martin boundary of the process (Proposition \ref{prop:martin}).

The significance of Harrison's problem is directly related to the task of finding the exact asymptotic behavior of the stationary density of RBM in a quadrant. 
Referring to this task, 
Harrison remarks that ``given the `cones of boundary influence' discovered by Avram, Dai and Hasenbein
\cite{avram_explicit_2001}, one may plausibly hope to crack the problem by piecing together the asymptotic analyses of occupancy densities for three much simpler processes: a RBM in the upper half-plane that is obtained by removing the left-hand boundary of the quadrant; a RBM in the right half-plane that is obtained by removing the lower boundary of the quadrant; and the unrestricted Brownian motion that is obtained by removing \textit{both} of the quadrant's boundaries.'' (\cite{Web}). Harrison further emphasizes the importance of the problem at hand by writing that ``at the very least, the solution of the problem posed above may provide a deeper understanding or alternative interpretation of recent results on the asymptotic behavior of various quantities associated with the stationary distribution of RBM in a quadrant,'' as in \cite{dai_reflecting_2011,dai_stationary_2013,franceschi_asymptotic_2016}.
\rems{The exact asymptotics of the stationary distribution for RBM in a quadrant
were recently determined 
in \cite{franceschi_asymptotic_2016}. The present article provides progress towards understanding many of the missing pieces in both \cite{avram_explicit_2001} and \cite{franceschi_asymptotic_2016}. The present results may also be used to investigate the consistency of the asymptotics obtained in \cite{franceschi_asymptotic_2016} with the analysis of \cite{avram_explicit_2001}.}
%
%
\\
\indent The tools in this paper are, in part, inspired by methods introduced by the seminal work of Malyshev \cite{malyshev_asymptotic_1973}, which studies the asymptotic behavior of the stationary distribution for random walks in the quadrant. Subsequent works studying asymptotics in the spirit of Malshev's approach include \cite{kurkova_martin_1998}, which studies  the Martin boundary of random walks in the quadrant and in the half-plane; \cite{kurkova_malyshevs_2003}, which extends the methods of Malyshev to the join-the-shorter-queue paradigm; \cite{kourkova_random_2011}, which studies the asymptotics of the Green's functions of random walks in the quadrant with non-zero drift absorbed at the axes, and \cite{franceschi_asymptotic_2016}, which extends Malyshev's method to computing asymptotics in the continuous case. \rems{To the best of our knowledge, this is the first time that such a method has been employed in the continuous case (for Brownian motion) for computing a Martin boundary.}

A second group of literature closely relating to the present paper is that which concerns the asymptotics of the stationary distribution of semi-martingale reflecting Brownian motion (SRBM) in the quadrant \cite{dai_reflecting_2011,dai_stationary_2013} or in the orthant \cite{miyazawa_conjectures_2011}. 
\rems{Nonetheless, our techniques still differ from those in \cite{dai_reflecting_2011,dai_stationary_2013,miyazawa_conjectures_2011} because of our use of the saddle point method.}
These three papers develop a similar analytic method and contain similar asymptotic results to those for SRBM arising from a tandem queue \cite{lieshout_tandem_2007,lieshout_asymptotic_2008, miyazawa_tail_2009}.
\\
\indent The remainder of the paper is organized as follows. Proposition \ref{prop:functionaleq} of Section~\ref{sec:functionaleq} establishes a kernel functional equation linking 
the moment generating functions of the measures $\pii$ and $\nu$. 
Section~\ref{sec:boundarymeasure} is concerned with the boundary occupancy measure. An explicit expression for its moment generating function 
is established in Lemma~\ref{lem:MGFexplicit} and its singularities are studied. The exact tail asymptotics of $\nu$ are subsequently given in Proposition~\ref{prop:asymptnu}.
Proposition~\ref{lem:inverselaplace} of Section~\ref{sec:inverselaplace} expresses the occupancy density $\pii$ as a simple integral via Laplace transform inversion. 
Theorem~\ref{thm:main} in Section~\ref{sec:saddlepoint} provides the paper's key result on the exact asymptotic behavior of $\pii (\rho,\alpha)$ as $\rho\to \infty$ with $\alpha\in(0,\pi)$. Section~\ref{sec:martin} is devoted to the study of the Martin boundary and to the corresponding harmonic functions.







\section{A kernel functional equation}
\label{sec:functionaleq}
\noindent We begin by defining the moment generating function (MGF) (alternatively, bilateral Laplace transform) of the measures $\pii$ and $\nu$. For $\theta=(\theta_1,\theta_2)\in \mathbb{C}^2$, let
$$
f(\theta):=\hat{\pii}(\theta)= \int_{\mathbb{R}\times \mathbb{R}_+} e^{\theta \cdot z} \pii(z)\mathrm{d}z
=\mathbb{E} \left[ \int_0^{\infty}e^{\theta \cdot Z(s)} \mathrm{d} s \right],
$$
and
$$
g(\theta_1):=\hat{\nu}(\theta)=\hat{\nu_1}(\theta_1)= \int_{\mathbb{R}} e^{\theta_1 \cdot z_1} \nu_1(z_1)\mathrm{d}z_1=\mathbb{E} \left[ \int_0^{\infty}e^{\theta \cdot Z(s)} \mathrm{d}  \ell(s) \right].
$$
We note that $g$ depends only on $\theta_1$;  it does not depend on $\theta_2$ since the support of  $\nu$ lies on the abscissa. Further, $f$ is a two-dimensional Laplace transform which is bilateral for one dimension. We wish to establish a \textit{kernel functional equation} linking the moment generating functions $f$ and $g$ (Proposition \ref{prop:functionaleq}).\\
\indent Consider the kernel
\begin{equation}
Q(\theta):=\frac{1}{2} (\theta_1^2+\theta_2^2)+\mu_1\theta_1+\mu_2\theta_2=\frac{1}{2}(|\theta+\mu|^2-(\mu_1^2+\mu_2^2)).
\label{eq:def:Q}
\end{equation}
Note that $Q(\theta)t=\log \mathbb{E}[e^{\theta\cdot \rems{(B(t)+\mu t)}}]$ is the cumulant-generating function of $\rems{B(t)+\mu t}$. The kernel $Q$ is also called the ``characteristic exponent'' or the ``Lévy exponent'' of $\rems{B(t)+\mu t}$.
Let $\Theta_2^\pm (\theta_1)$ denote the functions which ``cancel'' the kernel, i.e. the functions $Q(\theta_1,\Theta_2^\pm (\theta_1))=0$. This yields
\begin{equation}\label{analytic}
\Theta_2^\pm (\theta_1):=-\mu_2\pm\sqrt{(\mu_1^2+\mu_2^2)-(\theta_1+\mu_1)^2}\,\,,
\end{equation}
where 
\begin{equation}
\theta_1^\pm:=-\mu_1\pm \sqrt{\mu_1^2+\mu_2^2}\,,
\end{equation}
denotes the points which cancel the quantity under the square root. It is evident that \eqref{analytic} is analytic on $\mathbb{C}\setminus ((-\infty,\theta_1^-]\cup[\theta_1^+\infty))$. 
\rems{Let us define $$\theta^+:=(\theta_1^+,\Theta_2^+(\theta_1^+)))=(\theta_1^+,-\mu_2).$$}
Note also that $\theta_1^+>0$ and that $\theta_1^-<0$.
Let $$\theta^p_1 :=\frac{2(r\mu_2-\mu_1) }{r^2+1} \in (0,\theta_1^+), $$
be the first coordinate of the point of intersection between the circle $\rems{Q (\theta)}=0$ and the line $R\cdot \theta=0$ (see Figure~\ref{fig:circleline} below).
\begin{figure}[hbtp]
\centering
\includegraphics[trim={0.5cm 0.8cm 0.5cm 0.5cm}, clip,scale=3]{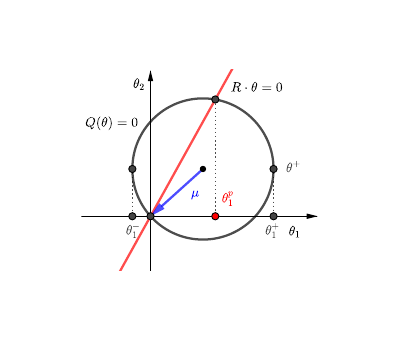}
\caption{Circle $Q(\theta)=0$, line $R\cdot \theta =0$ and points $\theta_1^\pm$ and $\theta_1^p$.}
\label{fig:circleline}
\end{figure}

\noindent
Define the sets
$$
E:= \{
\theta\in\mathbb{C}^2 :
\exists\,\widetilde\theta \in \mathbb{R}^2 \text{ such that } \widetilde\theta_1=\Re\theta_1, \ \Re\theta_2\leqslant \widetilde\theta_2, \ \widetilde\theta \cdot R<0, \text{ and } Q(\widetilde\theta)<0
\}.
$$
and
$$
F:=\{\theta\in\mathbb{C}^2 : 0<\Re \theta_1<\theta_1^p \text{ and } \Re\theta_2\leqslant 0 \}.
$$ Figure~\ref{fig:E} below provides a visual representation of $E \cap \mathbb{R}^2$ and $F \cap \mathbb{R}^2$.

\begin{figure}[H]
\centering
\includegraphics[scale=0.2]{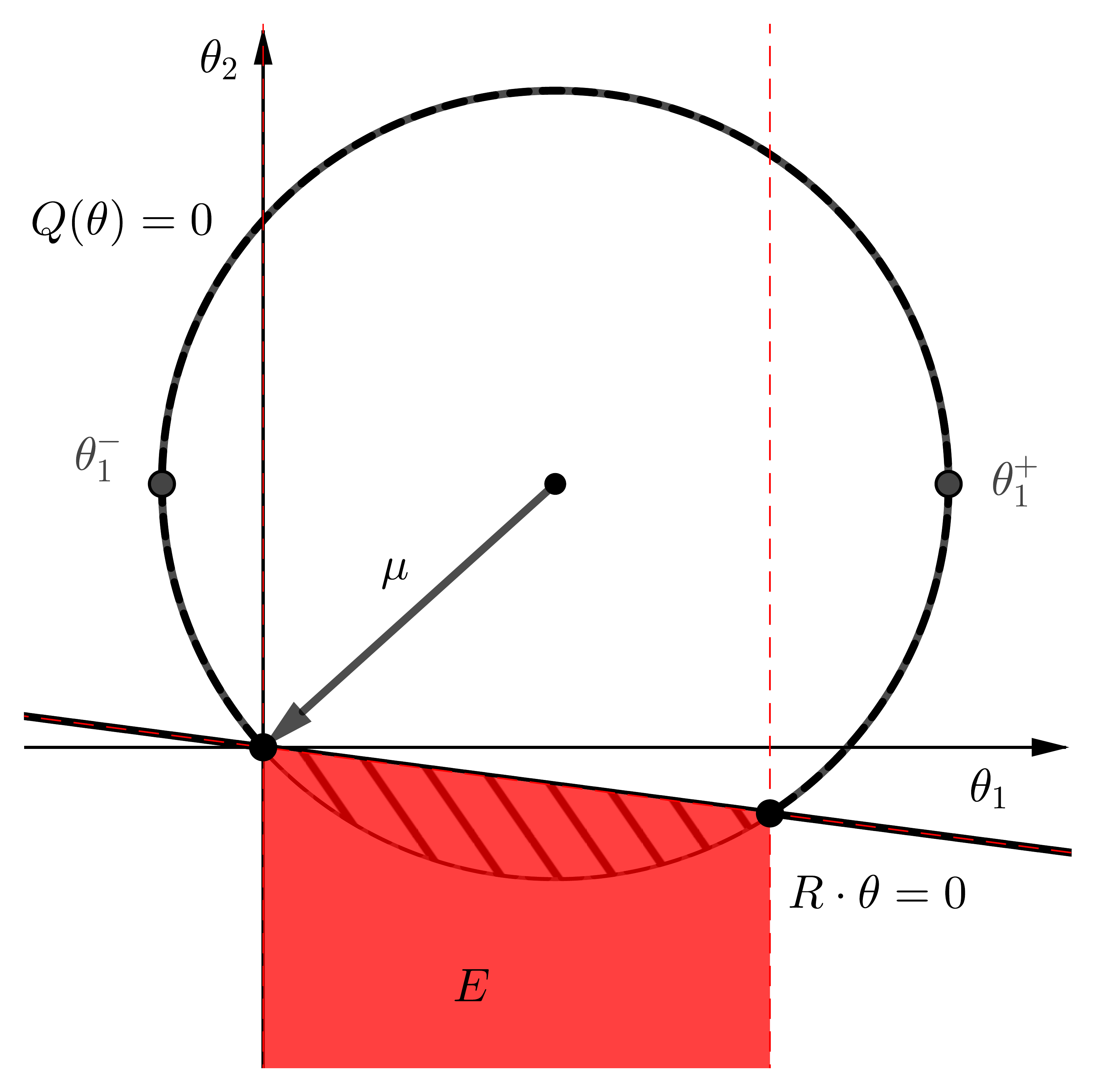}
\includegraphics[scale=0.2]{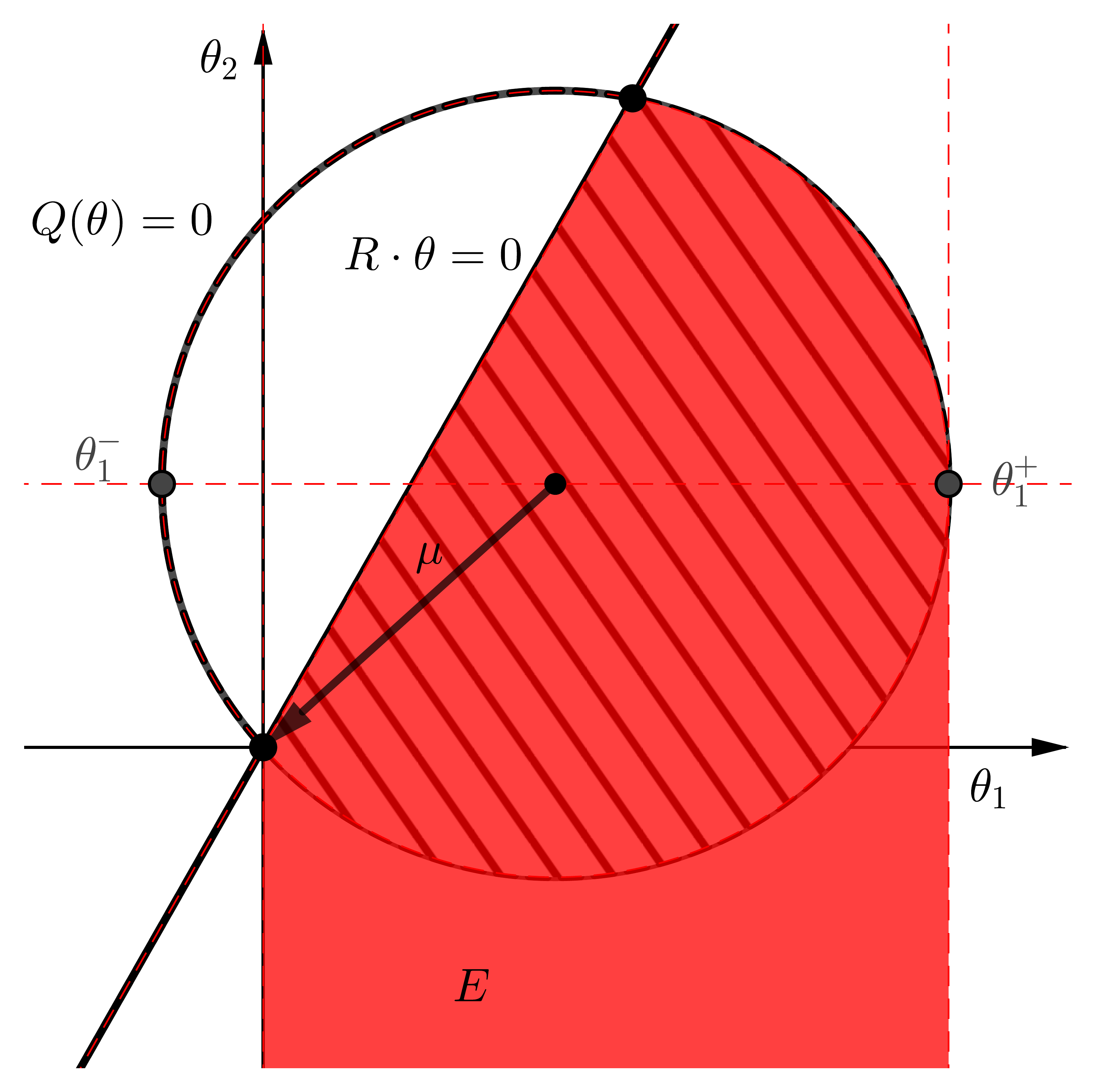}
\includegraphics[scale=0.2]{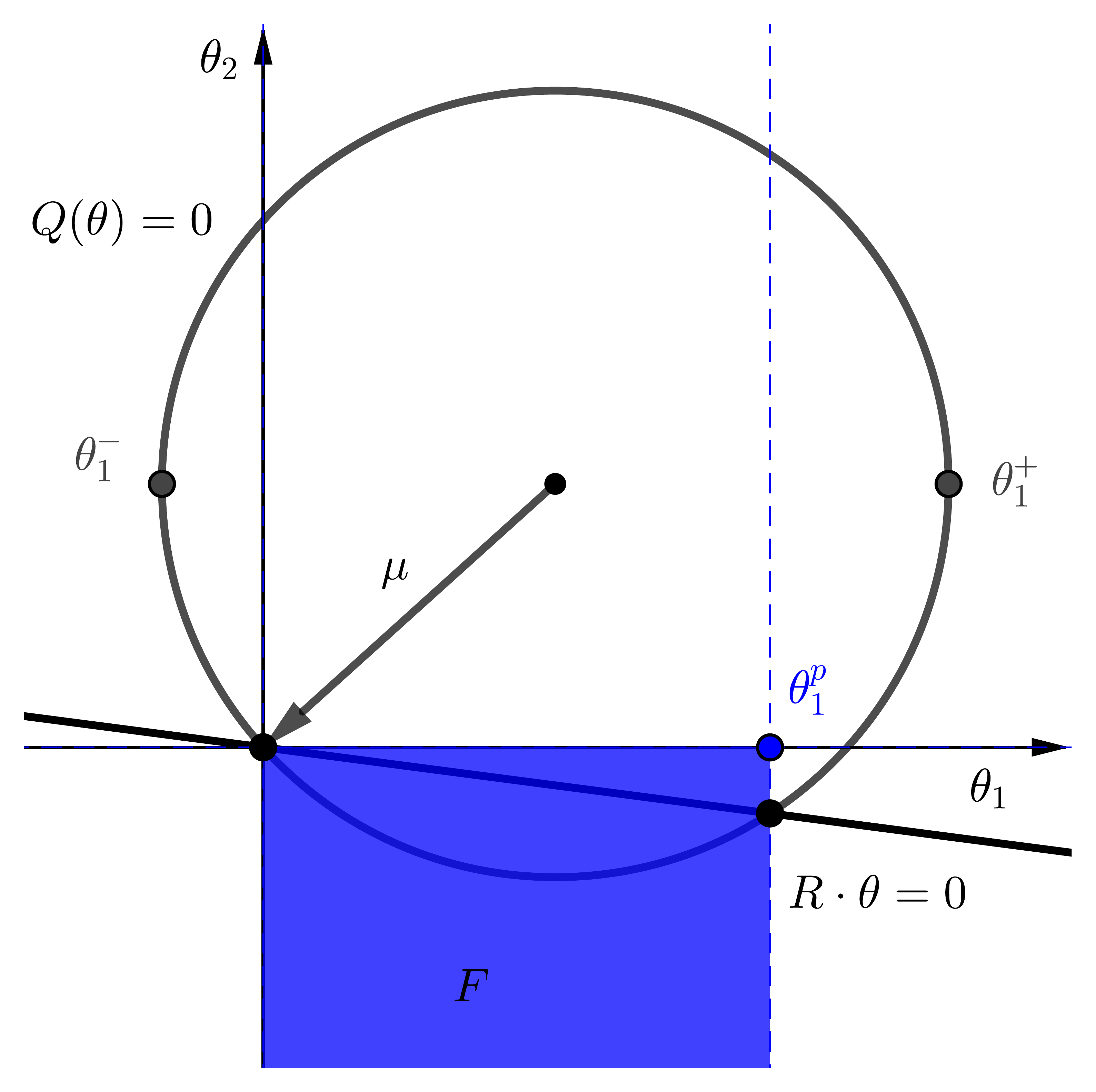}
\includegraphics[scale=0.2]{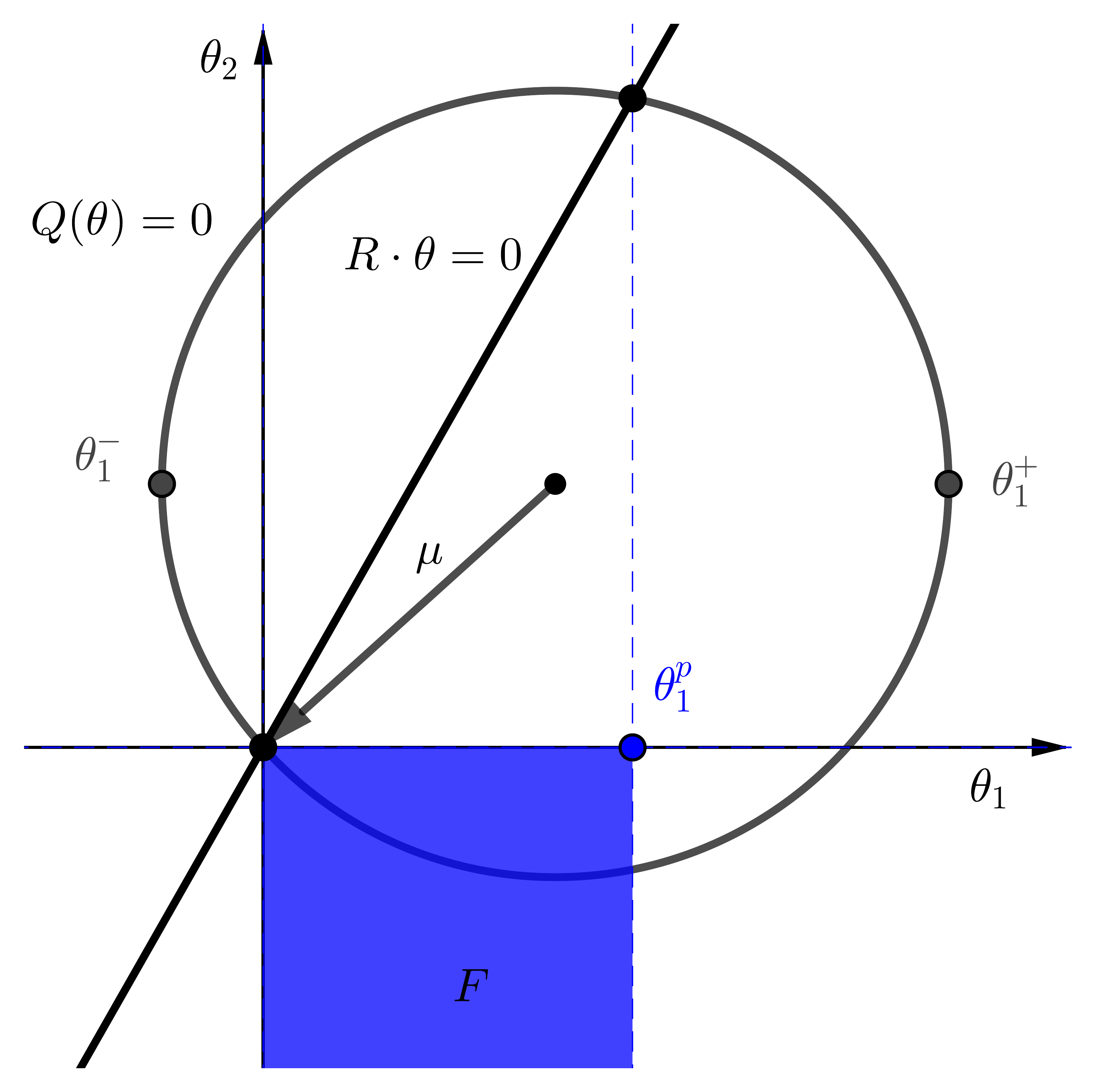}
\caption{In the first two pictures, the domain $E\cap\mathbb{R}^2$ is colored in red and the hatched subset of $E\cap\mathbb{R}^2$ is the set $\{\theta\in\mathbb{R}^2 :R \cdot\theta <0 \text{ and } Q(\theta)<0 \}$. In the last two pictures, the domain $F\cap\mathbb{R}^2$ is colored in blue. In the two pictures on the left, $r>0$ and $E\subset F$. In the two pictures on the right, $r<0$ and $F\subset E$.}
\label{fig:E}
\end{figure}
\noindent We now turn to studying the domains of convergence for $f$ and $g$.
\begin{lem}
For $\theta\in E\cup F$ we have that
\begin{equation}
 \lim_{t \rightarrow \infty} \mathbb{E} [e^{\theta \cdot Z(t)} ]=0. \label{eq17}
\end{equation}
Further,
\begin{equation}
f(\theta)=\mathbb{E} \left[ \int_0^{\infty}e^{\theta \cdot Z(s)} \mathrm{d} s \right] < \infty
\quad \text{and} \quad
g(\theta_1)=\mathbb{E} \left[ \int_0^{\infty}e^{\theta \cdot Z(s)} \mathrm{d}  \ell(s) \right]<\infty.
 \label{eq18}
\end{equation}
\label{lem:limitfinite}
\end{lem}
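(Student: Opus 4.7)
The plan is to reduce to real exponents and then apply It\^o's formula. Since $Z_2(t) \geq 0$, for any $\widetilde\theta \in \mathbb{R}^2$ with $\widetilde\theta_1 = \Re\theta_1$ and $\widetilde\theta_2 \geq \Re\theta_2$ one has $|e^{\theta \cdot Z(t)}| = e^{\Re\theta \cdot Z(t)} \leq e^{\widetilde\theta \cdot Z(t)}$. It therefore suffices to establish \eqref{eq17} and \eqref{eq18} for a suitable real $\widetilde\theta$; the complex statements will then follow by domination.

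For $\theta \in E$, the definition of $E$ supplies such a $\widetilde\theta$ with the additional properties $Q(\widetilde\theta) < 0$ and $R\cdot\widetilde\theta < 0$. I would apply It\^o's formula to $e^{\widetilde\theta\cdot Z(t)}$ using the semimartingale decomposition $\mathrm{d}Z = \mathrm{d}B + \mu\,\mathrm{d}t + R\,\mathrm{d}\ell$, obtaining
\begin{equation*}
e^{\widetilde\theta\cdot Z(t)} - 1 = M(t) + Q(\widetilde\theta)\int_0^t e^{\widetilde\theta\cdot Z(s)}\mathrm{d}s + (R\cdot\widetilde\theta)\int_0^t e^{\widetilde\theta\cdot Z(s)}\mathrm{d}\ell(s),
\end{equation*}
where $M$ is a local martingale. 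After localizing by stopping times $\tau_n \uparrow \infty$ and taking expectations, non-negativity of $e^{\widetilde\theta\cdot Z}$ together with the sign conditions yields the key bound
\begin{equation*}
|Q(\widetilde\theta)|\,\mathbb{E}\!\left[\int_0^{t\wedge\tau_n}\! e^{\widetilde\theta\cdot Z(s)}\mathrm{d}s\right] + |R\cdot\widetilde\theta|\,\mathbb{E}\!\left[\int_0^{t\wedge\tau_n}\! e^{\widetilde\theta\cdot Z(s)}\mathrm{d}\ell(s)\right] \leq 1.
\end{equation*}
Monotone convergence in $n$ and $t$ then bounds $f(\widetilde\theta) \leq 1/|Q(\widetilde\theta)|$ and $g(\widetilde\theta_1) \leq 1/|R\cdot\widetilde\theta|$, which proves \eqref{eq18}. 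For \eqref{eq17}, the same identity rewrites as $\mathbb{E}[e^{\widetilde\theta\cdot Z(t)}] = 1 - |Q(\widetilde\theta)|F(t) - |R\cdot\widetilde\theta|G(t)$, with $F, G$ the above monotone integrals; hence $\mathbb{E}[e^{\widetilde\theta\cdot Z(t)}]$ converges monotonically to some $L \geq 0$, and the integrability $\int_0^\infty \mathbb{E}[e^{\widetilde\theta\cdot Z(s)}]\mathrm{d}s = F(\infty) < \infty$ forces $L = 0$.

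For $\theta \in F$, the previous step applies directly when $r \leq 0$, since $F \subset E$ in that regime (cf.\ Figure~\ref{fig:E}). When $r > 0$, one may have $F \not\subset E$, and the strategy is different. Using $\Re\theta_2 \leq 0$ and $Z_2 \geq 0$, dominate $|\mathbb{E}[e^{\theta\cdot Z(t)}]| \leq \mathbb{E}[e^{a Z_1(t)}]$ with $a := \Re\theta_1 \in (0, \theta_1^p)$. By independence of $B_1$ from $(B_2, \ell)$,
\begin{equation*}
\mathbb{E}[e^{a Z_1(t)}] = e^{(a\mu_1 + a^2/2)t}\,\mathbb{E}[e^{r a \ell(t)}].
\end{equation*}
Since $\ell(t) = \sup_{s \leq t}(-B_2(s) - \mu_2 s)$ is the running supremum of a Brownian motion with positive drift $-\mu_2$, its moment generating function admits a classical closed form and yields $\mathbb{E}[e^{r a \ell(t)}] \leq C\,e^{(-r a\mu_2 + (r a)^2/2)t}$ (the regime $ra + |\mu_2| > 0$ is automatic here as $ra > 0$). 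Collecting exponents gives $\mathbb{E}[e^{a Z_1(t)}] \leq C\,e^{\kappa(a)t}$ with
\begin{equation*}
\kappa(a) := a(\mu_1 - r\mu_2) + \tfrac{a^2(1+r^2)}{2}.
\end{equation*}
The transience condition $\mu_1 - r\mu_2 < 0$ together with $\kappa(\theta_1^p) = 0$ implies $\kappa(a) < 0$ throughout $(0, \theta_1^p)$, delivering exponential decay of $\mathbb{E}[e^{a Z_1(t)}]$ and hence both \eqref{eq17} and \eqref{eq18}.

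The main obstacle is the $F$-case with $r > 0$: pure It\^o calculus does not close the argument because no admissible $\widetilde\theta$ simultaneously satisfies $Q(\widetilde\theta) < 0$ and $R\cdot\widetilde\theta < 0$ on the required vertical slice $\widetilde\theta_1 = \Re\theta_1$, so one must invoke the explicit Laplace transform of the running supremum of drifted Brownian motion. The appearance of the threshold $\theta_1^p$ is natural in this approach: it is exactly the nonzero root of $\kappa$, i.e., the second intersection of the line $R\cdot\theta = 0$ with the circle $Q(\theta) = 0$ that bounds the natural convergence domain.
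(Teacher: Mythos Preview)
Your proof is correct, but the route differs from the paper's in both cases.

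For $\theta\in E$, the paper avoids It\^o's formula entirely: it simply writes $Z(t)=B(t)+\mu t+R\ell(t)$ and, since $\widetilde\theta\cdot R<0$ and $\ell(t)\geq 0$, bounds $e^{\widetilde\theta\cdot Z(t)}\leq e^{\widetilde\theta\cdot(B(t)+\mu t)}$; the right-hand side is Gaussian with mean $e^{Q(\widetilde\theta)t}$, which decays. Your localized It\^o argument is more structural and delivers the explicit bounds $f(\widetilde\theta)\leq 1/|Q(\widetilde\theta)|$, $g(\widetilde\theta_1)\leq 1/|R\cdot\widetilde\theta|$ in one stroke, at the cost of a slightly heavier setup.

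For $\theta\in F$ with $r>0$, the paper does not invoke the closed-form MGF of the running supremum. Instead it uses the cruder pathwise bound $Z_1(t)\leq(\mu_1+r\mu_2^-)t+B_1(t)+|r|\sup_{s\leq t}|B_2(s)|$ together with Doob's $L^2$ maximal inequality applied to the submartingale $e^{a|r||B_2|}$, arriving at the same exponent $\kappa(a)=a(\mu_1-r\mu_2)+\tfrac{a^2(1+r^2)}{2}$. Your approach via $\mathbb{E}[e^{ra\ell(t)}]$ is cleaner and explains more transparently why $\theta_1^p$ is the threshold (it is exactly the nonzero root of $\kappa$), but it relies on knowing the distribution of the drifted running supremum; the paper's argument is more elementary.

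One small point: in the $F$-case you assert that exponential decay of $\mathbb{E}[e^{aZ_1(t)}]$ yields both parts of \eqref{eq18}, but the finiteness of $g(\theta_1)$ is not immediate from this bound alone. As in the paper, you should close the loop by feeding the already-established limit and $f<\infty$ back into the It\^o identity (with $\widetilde\theta=(a,0)$, say) to extract $g(a)<\infty$; since $ra>0$ here, the local-time integral is monotone and its limit is read off from the identity.
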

\begin{proof}
We consider the two cases $\theta\in E$ and $\theta\in F$ separately below.
\begin{enumerate}[label=(\roman*)]
\item
\underline{Let $\theta\in E$.} Consider $\widetilde\theta$ satisfying the conditions stated in the definition of the set $E$, that is $ \widetilde\theta_1=\Re\theta_1, \ \Re\theta_2\leqslant \widetilde\theta_2, \ \widetilde\theta \cdot R<0, \text{ and } Q(\widetilde\theta)<0$. We have
\begin{align*}
\mathbb{E} [|e^{\theta \cdot Z(t)}| ]=\mathbb{E} [e^{\Re\theta \cdot Z(t)} ] &\leqslant \mathbb{E} [e^{\widetilde\theta \cdot Z(t)}], \,\,(\text{since } \Re\theta_2 \leqslant \widetilde{\theta}_2 \text{ and } Z_2(t)\geqslant 0),
\\ &\leqslant \mathbb{E} [e^{\widetilde\theta \cdot (B(t)+\mu t)+(\widetilde\theta \cdot R)\ell(t)}],
\\ &\leqslant \mathbb{E} [e^{\widetilde\theta \cdot (B(t)+\mu t)}], \,\, (\text{since} \,\,\widetilde\theta \cdot R <0 \text{ and } \ell(t)\geqslant 0),
\\ &\leqslant e^{Q(\widetilde\theta )t},
\text{ (the MGF of a Gaussian)}
\\ \text{and then }\mathbb{E} [e^{\theta \cdot Z(t)} ] & \underset{t\to\infty}{\longrightarrow} 0 \text{ for } Q(\widetilde\theta )<0.
\end{align*}
From the inequality $\mathbb{E} [e^{\theta \cdot Z(t)} ]\leqslant e^{Q(\widetilde\theta )t}$ and by Fubini's theorem,
$\mathbb{E} \left[ \int_0^{\infty}e^{\theta \cdot Z(s)} \mathrm{d} s \right] < \infty$. Letting $t$ tend to infinity in equation~\eqref{eq:democonvergence}, we easily obtain that $\mathbb{E} \left[ \int_0^{\infty}e^{\theta \cdot Z(s)} \mathrm{d}  \ell(s) \right]<\infty$.\\
\item \underline{Let $\theta\in F$.} Let $a:= \Re\theta_1$. Noting that $Z_{2} (t)$ is non-negative for every $t \geq 0$ and $\Re\theta_2 \leq 0$, we have
\begin{eqnarray*}
\left|e^{{\theta}\cdot {Z}(t)} \right| = \left| e^{\theta_1 Z_1(t) + \theta_2 Z_{2}(t)} \right|
\leq e^{\Re\theta_1 Z_1(t) + \Re\theta_2 Z_{2}(t)}
\leq e^{a Z_1(t)}.
\end{eqnarray*}
Noting that $B_1(t)$ and $B_2(t)$ are assumed independent, and employing the inequality in \eqref{eq2} of the Appendix, we have that
\begin{eqnarray*}
&& \Big|\mathbb{E}\left[ e^{{\theta}\cdot {Z}(t)} \right] \Big|
\leq \mathbb{E}\left[ e^{a Z_1(t)} \right] \\
&\leq& \mathbb{E}\left[ e^{a \left((\mu_{1} + r \mu_{2}^{-})t + B_{1}(t) + |r| \sup_{0 \leq s \leq t} |B_{2}(s)|\right)  } \right] \\
&=& e^{a (\mu_{1} + r \mu_{2}^{-})t} \cdot \mathbb{E}\left[ e^{a B_{1}(t)} \right] \cdot
\mathbb{E}\left[e^{a |r| \sup_{0 \leq s \leq t} |B_{2}(s)|}\right] \\
&=& e^{a (\mu_{1} + r \mu_{2}^{-})t} \cdot e^{\frac{1}{2}\,a^2 t} \cdot \mathbb{E}\left[\sup_{0 \leq s \leq t} e^{a |r|  |B_{2}(s)|}\right].
\end{eqnarray*}
Since $ x \mapsto \exp(a|r||x|)$ is a convex function, $\exp(a|r| |B_{2}(t)|)$ is a submartingale. By Doob's $L^2$ Maximal Inequality, we have
\begin{eqnarray*}
&&\left( \mathbb{E} \left( \sup_{0\leq s \leq t} e^{a|r| |B_{2}(s)|} \right) \right)^{\frac{1}{2}} \leq 2 \left( \mathbb{E} e^{a|r| |B_{2}(t)|} \right)^{\frac{1}{2}}  \\
&\leq& 2 \left( \rems{ \mathbb{E} e^{a|r| B_{2}(t)}  + \mathbb{E} e^{- a|r| B_{2}(t)} } \right)^{\frac{1}{2}} = 2\sqrt{2} \, e^{\frac{1}{4} a^2 r^2 t}.
\end{eqnarray*}
Thus
\begin{equation*}
 \mathbb{E}\left( \sup_{0 \leq s \leq t} e^{a|r| |B_{2}(s)|} \right)
 \leq 8 \, e^{\frac{1}{2} a^2 r^2 t},   \label{eq12}
\end{equation*}
and
\begin{equation}
\Big|\mathbb{E}\left[ e^{{\theta}\cdot {Z}(t)} \right] \Big|
\leq 8 \, e^{\left(a \left(\mu_1 + r \mu_2^{-} \right) + \frac{1}{2}\,a^2 + \frac{1}{2}\,a^2 r^2\right) t}. \label{eq19}
\end{equation}
Since $\theta\in F$, we have $0<a<\theta_1^p=\frac{2(r\mu_2-\mu_1) }{r^2+1}$ and 
\begin{equation*}
a \left(\mu_1 + r \mu_2^{-} \right) + \frac{1}{2}\,a^2 + \frac{1}{2}\,a^2 r^2 <0.
\end{equation*}
Equation \eqref{eq17} now follows immediately from the inequality in \eqref{eq19}. The first statement of convergence in \eqref{eq18} follows from the inequality in \eqref{eq19} and by Fubini's theorem. As in the case $\theta\in E$, we conclude the proof letting $t$ go to infinity in equation~\eqref{eq:democonvergence}. The second statement of convergence in \eqref{eq18} then immediately follows.
\\
\noindent

\end{enumerate}
\end{proof}
We now turn to Proposition \ref{prop:functionaleq}, which provides a kernel functional equation linking the functions $f$ and $g$.

\begin{prop}
For all $\theta=(\theta_1,\theta_2)$ in the set $E\cup F$,
the integrals $f(\theta)$ and $g(\theta_1)$ are finite and the following functional equation holds
\begin{equation}
0=1+ Q(\theta)f(\theta) + (R \cdot \theta) g(\theta_1),
\label{eq:functional_eq}
\end{equation}
where $Q$ is the kernel defined in \eqref{eq:def:Q}.
\label{prop:functionaleq}
\end{prop}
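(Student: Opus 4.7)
My plan is to derive the functional equation by applying It\^o's formula to the process $e^{\theta \cdot Z(t)}$ and then passing to the limit $t\to\infty$ using Lemma~\ref{lem:limitfinite}.

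First, since $Z(t) = B(t) + \mu t + R\,\ell(t)$ where $\ell$ is continuous of bounded variation and $B$ is a standard Brownian motion with identity covariance, It\^o's formula applied to the real and imaginary parts of $e^{\theta \cdot Z(t)}$ (which I'll extend to complex $\theta$ after first handling real $\theta$) yields
\begin{equation*}
e^{\theta \cdot Z(t)} - 1 = \int_0^t e^{\theta \cdot Z(s)} \left(\tfrac{1}{2}(\theta_1^2+\theta_2^2) + \mu\cdot\theta\right) \dd s + \int_0^t e^{\theta \cdot Z(s)}\,\theta\cdot \dd B(s) + \int_0^t e^{\theta \cdot Z(s)}(R\cdot\theta)\, \dd \ell(s).
\end{equation*}
The drift term is exactly $Q(\theta)\int_0^t e^{\theta \cdot Z(s)} \dd s$ by the definition of $Q$ in \eqref{eq:def:Q}.

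Next I would take expectations. The stochastic integral $\int_0^t e^{\theta \cdot Z(s)}\,\theta\cdot \dd B(s)$ is a priori only a local martingale because $e^{\theta \cdot Z(s)}$ is unbounded; I would use a localization argument with stopping times $\tau_n=\inf\{s\geq 0 : |Z(s)|\geq n\}\wedge t$, producing a genuine martingale under the stopped integral, and then pass $n\to\infty$ using dominated convergence. The $L^1$ domination needed for this step is precisely the bound $\mathbb{E}[e^{\Re\theta \cdot Z(s)}] \leqslant e^{Q(\tilde\theta)s}$ or \eqref{eq19} from the proof of Lemma~\ref{lem:limitfinite}, which guarantees integrability of $e^{\theta\cdot Z(s)}$ uniformly in $s\in[0,t]$ on $E\cup F$. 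This produces the identity (presumably the equation~\eqref{eq:democonvergence} referenced in Lemma~\ref{lem:limitfinite})
\begin{equation*}
\mathbb{E}[e^{\theta \cdot Z(t)}] - 1 = Q(\theta)\,\mathbb{E}\!\left[\int_0^t e^{\theta \cdot Z(s)}\dd s\right] + (R\cdot\theta)\,\mathbb{E}\!\left[\int_0^t e^{\theta \cdot Z(s)}\dd \ell(s)\right].
\end{equation*}

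Finally, I would let $t\to\infty$. By Lemma~\ref{lem:limitfinite}, the left-hand side tends to $-1$ (as $\mathbb{E}[e^{\theta \cdot Z(t)}]\to 0$), and by the finiteness statements in \eqref{eq18} together with monotone/dominated convergence (applied to real and imaginary parts), the expectations on the right converge to $f(\theta)$ and $g(\theta_1)$ respectively. Rearranging yields $0 = 1 + Q(\theta)f(\theta) + (R\cdot\theta)g(\theta_1)$, as required.

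The main obstacle is the technical justification that the stochastic integral truly has vanishing expectation despite the unboundedness of $e^{\theta \cdot Z(s)}$ and the fact that $\theta$ is allowed to be complex; this is handled by localization plus the exponential moment bounds already used to prove Lemma~\ref{lem:limitfinite}. Once that is done, everything else is a clean limiting argument.
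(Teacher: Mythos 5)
Your proposal follows essentially the same route as the paper: apply It\^o's formula to $e^{\theta\cdot Z(t)}$, take expectations to obtain the finite-$t$ identity (the paper's equation~\eqref{eq:democonvergence}), and then let $t\to\infty$ using Lemma~\ref{lem:limitfinite}. Your version is, if anything, a bit more careful: the paper simply asserts that $\int_0^t \nabla f(Z(s))\cdot \mathrm{d}B(s)$ is a martingale with zero expectation, whereas you correctly flag that with $f(z)=e^{\theta\cdot z}$ this is a priori only a local martingale (the integrand $e^{\theta\cdot Z(s)}\theta$ is unbounded) and propose a localization with $\tau_n = \inf\{s:|Z(s)|\geq n\}\wedge t$ followed by a passage to the limit in $n$, dominated by the exponential moment bounds used in the proof of Lemma~\ref{lem:limitfinite}. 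You also correctly note the need to handle complex $\theta$ via real and imaginary parts. These are genuine technical refinements, not a different proof strategy.

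One small point worth being more explicit about in the localization step: to pass $n\to\infty$ in the term $\mathbb{E}\big[\int_0^{t\wedge\tau_n} e^{\theta\cdot Z(s)}\,\mathrm{d}\ell(s)\big]$ you need the dominating quantity $\mathbb{E}\big[\int_0^{t} e^{\Re\theta\cdot Z(s)}\,\mathrm{d}\ell(s)\big]$ to be finite for each finite $t$, and this does not follow directly from the pointwise bound $\mathbb{E}[e^{\Re\theta\cdot Z(s)}]\leqslant e^{Q(\widetilde\theta)s}$ alone (that only handles the $\mathrm{d}s$ integral via Fubini). One clean resolution for real $\theta$ in $E\cup F$: after taking expectations of the stopped identity, solve for $\mathbb{E}\big[\int_0^{t\wedge\tau_n} e^{\theta\cdot Z(s)}\,\mathrm{d}\ell(s)\big]$ in terms of the other three terms, all of which are bounded uniformly in $n$, and then invoke monotone convergence (the integrand is nonnegative once $\theta$ is real and one works with $e^{\Re\theta\cdot Z}$). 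This makes the domination available before passing to complex $\theta$. With that supplement, your argument is complete and matches the paper's.
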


\begin{proof}
For 
 $f\in \mathcal{C}^2 (\mathbb{R}\times\mathbb{R}_+)$, we have by It\^o's Lemma that
\begin{equation}\label{Ito}
 f(Z(t))-f(Z(0))=\int_0^t \nabla f(Z(s)).\mathrm{d} B(s) + \int_0^t \mathcal{L}f(Z(s)) \mathrm{d} s +  \int_0^t  R \cdot \nabla f(Z(s)) \mathrm{d} \ell(t),
\end{equation}
where $\mathcal{L}$ is the generator
$$
\mathcal{L}=\frac{1}{2} \triangle + \mu.\nabla
.
$$
For $z\in\mathbb{R}\times\mathbb{R}_+$, we shall let $f(z)=e^{\theta \cdot z}$. We proceed to take expectations of the equality in \eqref{Ito}. The integral 
$\int_0^t \nabla f(Z(s)).\mathrm{d} B_s $ is a martingale 
and thus its expectation is zero. This yields
\begin{equation}
\mathbb{E} [ e^{\theta \cdot Z(t)} ] - 1
=
0 
+ 
Q (\theta)   \mathbb{E} \left[ \int_0^t  e^{\theta\cdot Z(s)} \mathrm{d} s \right]
+ 
(R \cdot \theta)  \mathbb{E} \left[  \int_0^t e^{\theta\cdot Z(s)} \mathrm{d} \ell(s) \right].
\label{eq:democonvergence}
\end{equation} 
We now invoke Lemma~\ref{lem:limitfinite}. For $\theta\in E\cup F$, $\mathbb{E} [e^{\theta \cdot Z(t)} ] \underset{t\to\infty}{\longrightarrow} 0 $. Further, by Lemma~\ref{lem:limitfinite}, the integrals 
$\mathbb{E} \left[ \int_0^{\infty}e^{\theta \cdot Z(s)} \mathrm{d}  \ell(s) \right]$ and $\mathbb{E}\left[ \int_0^{\infty}e^{\theta \cdot Z(s)} \mathrm{d} s \right]$ are finite. Letting $t$ tend to infinity in equation \eqref{eq:democonvergence}, we obtain 
\begin{equation*}
0 - 1
=
Q (\theta)   \mathbb{E} \left[ \int_0^{\infty}e^{\theta \cdot Z(s)} \mathrm{d} s \right]
+ 
(R\cdot \theta)  \mathbb{E} \left[ \int_0^{\infty}e^{\theta \cdot Z(s)} \mathrm{d}  \ell(s) \right],
\end{equation*}
which indeed is equation \eqref{eq:functional_eq}. This concludes the proof.
\end{proof}

We shall use the convergence of $f$ and $g$ on the set $E$ in the proof of Lemma~\ref{lem:MGFexplicit}. The convergence on the set $F$ will be employed in the proof of Proposition~\ref{lem:inverselaplace}.

\section{Boundary occupancy measure}
\label{sec:boundarymeasure}
\noindent This section concerns the study of the boundary occupancy measure. We shall find an explicit expression for its MGF in Lemma~\ref{lem:MGFexplicit} and Proposition~\ref{prop:asymptnu} provides its exact asymptotics. Throughout,  denote $\sqrt{\cdot}$ to be the principal square root function which is analytic on $\mathbb{C}\setminus (-\infty, 0]$ and such that $\sqrt{1}=1$.
\begin{lem}
The moment generating function of the boundary occupancy measure can be meromorphically continued to the set $\mathbb{C}\setminus ((-\infty,\theta_1^-] \cup [\theta_1^+,\infty))$ and is equal to
\begin{equation}
g(\theta_1)
=\frac{-1}{r\theta_1+\Theta_2^- (\theta_1)}
=\frac{1}{-r\theta_1+\mu_2+\sqrt{(\mu_1^2+\mu_2^2)-(\theta_1+\mu_1)^2}},
\label{eq:value:g}
\end{equation}
for all $\theta_1\in\mathbb{C}\setminus ((-\infty,\theta_1^-] \cup [\theta_1^+,\infty))$. The function $g$ then has a simple pole at $0$ and has another pole in $\mathbb{C}\setminus ((-\infty,\theta_1^-] \cup [\theta_1^+,\infty))$ if and only if 
\begin{equation}
\rems{R\cdot \theta^+}
=r \theta_1^+ -\mu_2
>0.
\label{eq:CNSpole}
\end{equation}
When it exists, the other (simple) pole is $$\theta^p_1 :=\frac{2(r\mu_2-\mu_1) }{r^2+1} \in (0,\theta_1^+) .$$
Finally, in the neighborhood of $\theta_1^+$,
\begin{equation*}
g (\theta_1) 
\rems{=}
\begin{cases}
\frac{1}{-r\theta_1^+ +\mu_2}- \frac{1}{(-r\theta_1^+ +\mu_2)^2} {\sqrt{(\theta_1^+ -\theta_1)(\theta_1^+-\theta_1^-)}}  
+ O(\theta_1-\theta_1^+) & \text{if } r \theta_1^+ -\mu_2 \neq 0,
\\
\frac{1}{\sqrt{(\theta_1^+ -\theta_1)(\theta_1^+-\theta_1^-)}} + O(1) & \text{if } r \theta_1^+ -\mu_2 = 0\,.
\end{cases}
\end{equation*}
\label{lem:MGFexplicit}
\end{lem}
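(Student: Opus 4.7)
The plan is to eliminate $f$ from the kernel functional equation \eqref{eq:functional_eq} by specializing $\theta_2$ to a root of the kernel. Substituting $\theta_2=\Theta_2^-(\theta_1)$ makes $Q(\theta)=0$, so the equation collapses to $0=1+(r\theta_1+\Theta_2^-(\theta_1))\,g(\theta_1)$, which yields the first form of \eqref{eq:value:g} once one justifies that the substitution lies in the domain $E\cup F$ where Proposition~\ref{prop:functionaleq} applies.

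To this end I would verify that for small real $\theta_1>0$ the point $(\theta_1,\Theta_2^-(\theta_1))$ lies in $E$. Setting $h(\theta_1):=r\theta_1+\Theta_2^-(\theta_1)$, a direct computation gives $h(0)=0$ and $h'(0)=r-\mu_1/\mu_2$; combined with $\mu_2<0$, the transience condition $r\mu_2-\mu_1>0$ forces $h'(0)<0$. Geometrically, this says that near the origin the lower arc $\{\theta_2=\Theta_2^-(\theta_1)\}$ of $\{Q=0\}$ lies strictly below the line $\{R\cdot\theta=0\}$, so choosing $\widetilde\theta=(\theta_1,\widetilde\theta_2)$ with $\widetilde\theta_2\in(\Theta_2^-(\theta_1),-r\theta_1)$ places $\widetilde\theta$ strictly inside the disk $\{Q<0\}$ with $R\cdot\widetilde\theta<0$, witnessing $(\theta_1,\Theta_2^-(\theta_1))\in E$. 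The functional equation then yields $g(\theta_1)=-1/(r\theta_1+\Theta_2^-(\theta_1))$ on this open set. Since $g$ is analytic on its vertical strip of convergence and $\Theta_2^-$ is analytic on $\mathbb{C}\setminus((-\infty,\theta_1^-]\cup[\theta_1^+,\infty))$, the formula extends by meromorphic continuation to the entire slit plane, and multiplying numerator and denominator by $-1$ converts the first form into the second.

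The pole analysis reduces to locating zeros of $h$. Isolating and squaring the equation $\sqrt{(\mu_1^2+\mu_2^2)-(\theta_1+\mu_1)^2}=r\theta_1-\mu_2$ produces $(r^2+1)\theta_1(\theta_1-\theta_1^p)=0$, whose roots are $0$ and $\theta_1^p$. The root $0$ always gives a pole of $g$ since $\mu_2<0$ makes $\sqrt{\mu_2^2}=-\mu_2$ and the denominator vanishes; this pole is simple because $h'(0)\neq0$ from the computation above. The main obstacle, and the heart of the lemma, is to decide when $\theta_1^p$ is a genuine zero of $h$ rather than a spurious root of the squaring (the latter meaning $r\theta_1^p+\Theta_2^+(\theta_1^p)=0$ on the upper arc). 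I would exploit the convexity of $\Theta_2^-$ on $[\theta_1^-,\theta_1^+]$: it makes $h$ convex with $h(0)=0$ and $h'(0)<0$, so $h$ dips strictly below zero on $(0,\theta_1^+]$ and returns to zero inside $(0,\theta_1^+)$ if and only if $h(\theta_1^+)=r\theta_1^+-\mu_2=R\cdot\theta^+$ is strictly positive, which is condition \eqref{eq:CNSpole}. In the borderline case $R\cdot\theta^+=0$ one verifies algebraically, using $(r^2-1)\theta_1^+=2\mu_1$, that $\theta_1^p=\theta_1^+$, so the candidate pole collides with the branch point of the slit plane and is not a pole of the meromorphic continuation. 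The pole at $\theta_1^p$ is simple because $h$ crosses zero transversally at a convex function's second zero, giving $h'(\theta_1^p)>0$.

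Finally, for the behavior near $\theta_1^+$ I would use the factorization $(\mu_1^2+\mu_2^2)-(\theta_1+\mu_1)^2=(\theta_1^+-\theta_1)(\theta_1-\theta_1^-)$ together with continuity of $\sqrt{\theta_1-\theta_1^-}$ at $\theta_1^+$ to obtain $\sqrt{(\mu_1^2+\mu_2^2)-(\theta_1+\mu_1)^2}=\sqrt{(\theta_1^+-\theta_1)(\theta_1^+-\theta_1^-)}+O(|\theta_1^+-\theta_1|^{3/2})$. If $-r\theta_1^++\mu_2\neq0$, the denominator of $g$ has nonzero limit $A:=-r\theta_1^++\mu_2$, and the expansion $1/(A+\varepsilon)=1/A-\varepsilon/A^2+O(\varepsilon^2)$ with $\varepsilon=\sqrt{\cdots}-r(\theta_1-\theta_1^+)$ yields the first stated asymptotic (the linear term $-r(\theta_1-\theta_1^+)/A^2$ is absorbed into the $O(\theta_1-\theta_1^+)$ remainder). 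If $-r\theta_1^++\mu_2=0$ the denominator reduces to $\sqrt{\cdots}-r(\theta_1-\theta_1^+)$; factoring out the dominant square root and using that $r(\theta_1-\theta_1^+)/\sqrt{\cdots}=O(|\theta_1^+-\theta_1|^{1/2})$ gives the second asymptotic with the claimed $O(1)$ remainder.
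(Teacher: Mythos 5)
Your proposal is correct and follows the paper's overall route: specialize the kernel functional equation at $\theta_2=\Theta_2^-(\theta_1)$ to kill $Q$, check that $(\theta_1,\Theta_2^-(\theta_1))\in E$ for small $\theta_1>0$ so that Proposition~\ref{prop:functionaleq} applies, then meromorphically continue, locate poles by squaring, and Taylor-expand at $\theta_1^+$.

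The one step where you genuinely diverge from the paper is in deciding which root of the squared equation $(r\theta_1-\mu_2)^2=(\mu_1^2+\mu_2^2)-(\theta_1+\mu_1)^2$ is a true zero of $h(\theta_1)=r\theta_1+\Theta_2^-(\theta_1)$. The paper filters the spurious root using the branch choice of the principal square root: the right-hand side of $r\theta_1-\mu_2=\sqrt{\cdots}$ must have positive real part, so one simply checks $\Re(r\theta_1-\mu_2)>0$ at each candidate root. You instead exploit convexity of $\Theta_2^-$ on $[\theta_1^-,\theta_1^+]$ together with $h(0)=0$, $h'(0)<0$, to conclude $h$ has a second real zero in $(0,\theta_1^+)$ iff $h(\theta_1^+)=R\cdot\theta^+>0$, and you make the borderline case $R\cdot\theta^+=0$ explicit by showing $\theta_1^p=\theta_1^+$. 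Both arguments are elementary and reach the same conclusion; the paper's is a one-line sign check tied to the branch convention, while yours is a self-contained real-analytic argument that also explains geometrically why the pole, when it exists, lands in $(0,\theta_1^+)$ and why it merges with the branch point at the boundary case. One small wording fix: you cannot simultaneously say $h$ ``dips strictly below zero on $(0,\theta_1^+]$'' and that $h(\theta_1^+)>0$; what you mean is that $h<0$ just to the right of $0$, and by convexity it crosses back up somewhere in $(0,\theta_1^+)$ precisely when $h(\theta_1^+)>0$.
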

\begin{proof}
For $\epsilon>0$, let us denote $\widetilde{\theta} = (\theta_1,\Theta_2^- (\theta_1)+\epsilon)$. One may easily verify that for both $\theta_1>0$ sufficiently small and $\epsilon>0$ sufficiently small we have that $\rems{R\cdot \widetilde{\theta}}<0 $ and $Q(\widetilde{\theta})<0$. Together, these inequalities imply that $(\theta_1,\Theta_2^- (\theta_1))\in E$. $E$ is an open set and by continuity we have that $(\theta_1,\Theta_2^- (\theta_1))\in E$ for $\theta_1$ in some open non-empty set. \\
\indent We now evaluate the functional equation~\eqref{eq:functional_eq} at the points $(\theta_1,\Theta_2^- (\theta_1))\in E$. Since 
\begin{equation*}
Q(\theta_1,\Theta_2^- (\theta_1))=0,
\end{equation*}
equation~\eqref{eq:value:g} is satisfied for $\theta_1$ in some open non-empty set. By the principle of analytic continuation, we may continue $g$ on the set $\mathbb{C}\setminus ((-\infty,\theta_1^-] \cup [\theta_1^+,\infty))$, the latter being the domain of the function in equation \eqref{eq:value:g}. The square root at the denominator of this function can be written as 
\begin{equation*}
\sqrt{(\theta_1^+-\theta_1)(\theta_1-\theta_1^-)}.
\end{equation*}
We emphasize have taken the principal square root function with a cut on $(-\infty, 0]$ and such that $\sqrt{1}=1$. \\
\indent The remainder of the proof proceeds in a straightforward manner.
Finding the poles of the function 
\begin{equation*}
\frac{1}{-r \theta_1 + \mu_2 + \sqrt{(\mu_1^2 + \mu_{2}^{2}) - (\theta_1 + \mu_1)^2}},
\end{equation*}
in $\mathbb{C}\setminus ((-\infty,\theta_1^-] \cup [\theta_1^+,\infty))$ is equivalent to finding the zeros of the function $-r \theta_1 + \mu_2 + \sqrt{(\mu_1^2 + \mu_{2}^{2}) - (\theta_1 + \mu_1)^2}$, i.e. solving for $\theta_1$ in the equation
\begin{equation*}
r\theta_1 - \mu_2 = \sqrt{(\mu_1^2 + \mu_{2}^{2}) - (\theta_1 + \mu_1)^2}\,.
\end{equation*}
The above equation is equivalent to the following equations
\begin{eqnarray}
&& (r\theta_1 - \mu_2)^2 = \left( \mu_1^2 + \mu_2^2 \right) - (\theta_1 + \mu_1)^2, \label{eq:root:1}\\
&& \Re (r \theta_1 - \mu_2) > 0. \label{eq:root:2}
\end{eqnarray}
The inequality in \eqref{eq:root:2} follows because the branch we select for $\sqrt{(\mu_1^2 + \mu_{2}^{2}) - (\theta_1 + \mu_1)^2}$ will ensure that the real part of  $r \theta_1 - \mu_2$ is positive. The roots of \eqref{eq:root:1} are $\theta_1 = 0$ and
\begin{equation*}
\theta_1=2(r\mu_2 - \mu_1)/(r^2 +1).
\end{equation*}
Together with \eqref{eq:root:2}, we see that $\theta_1 = 0$ is a pole of $g$ because we assumed that $\mu_2 <0$. Further,
$
\theta_1 = 2(r\mu_2 - \mu_1)/(r^2 +1)
$
is a pole of $g$ if and only if 
\begin{equation}
\left(r^2-1\right)\mu_2 - 2r \mu_1 >0.
\label{eq:CNSpole1}
\end{equation}
Under conditions \eqref{eq:conditiontrans} and \eqref{eq:conddriftneg}, it is straightforward to see that \eqref{eq:CNSpole1} is equivalent to \eqref{eq:CNSpole}. The behavior of $g$ in the neighborhood of $\theta_1^+$ is then easily obtained as desired in the statement of the Lemma.
\end{proof}

Figure~\ref{fig:polecondition} provides a geometric interpretation of the condition in \eqref{eq:CNSpole}, namely the condition for $g$ to have a pole other than $0$. The figure also illustrates the different asymptotic cases in Proposition~\ref{prop:asymptnu}. The following proposition establishes the exact asymptotics for the tail distribution of~$\nu$. 
\begin{figure}[hbtp]
\centering
\includegraphics[scale=0.14]{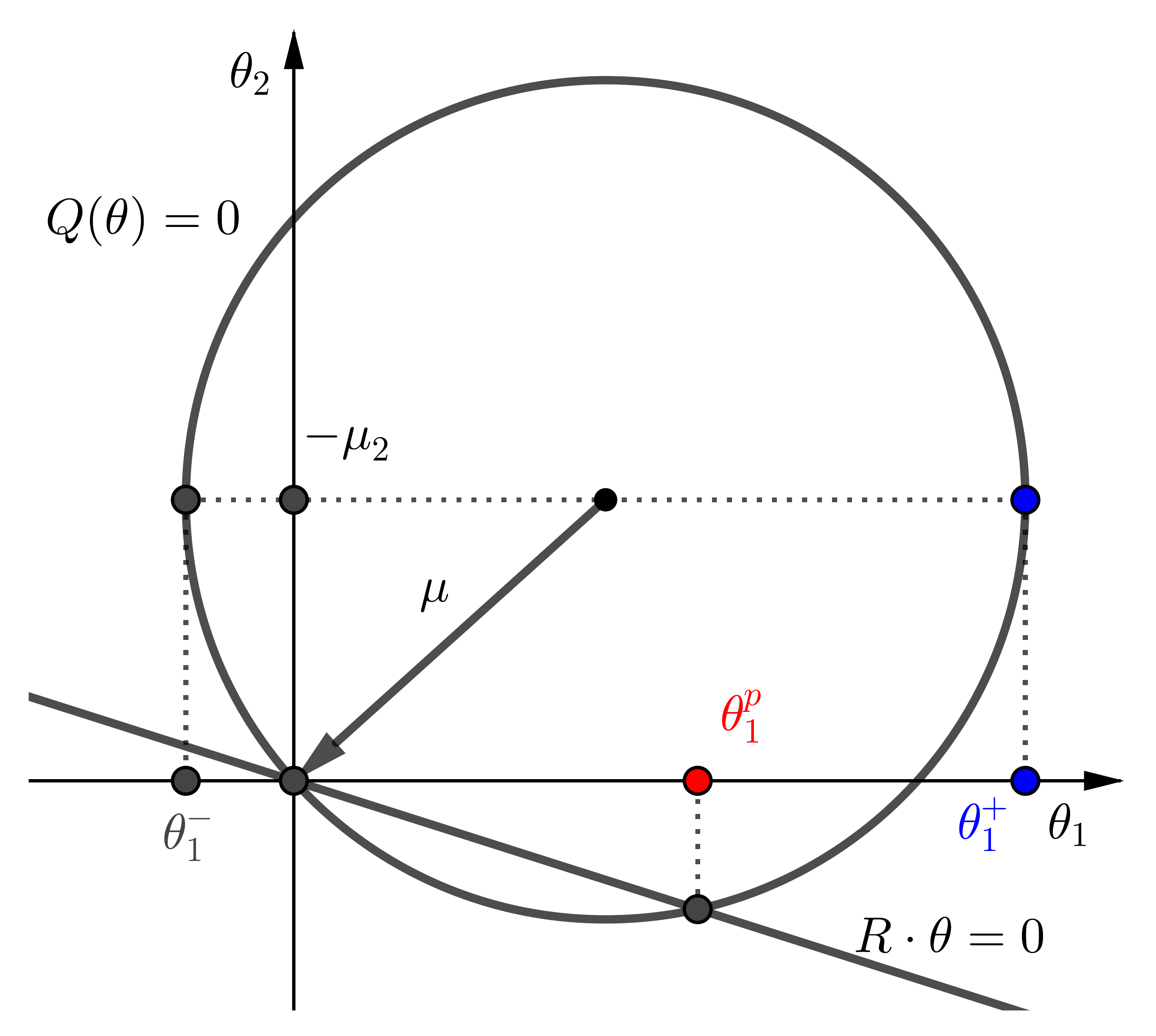}
\includegraphics[scale=0.14]{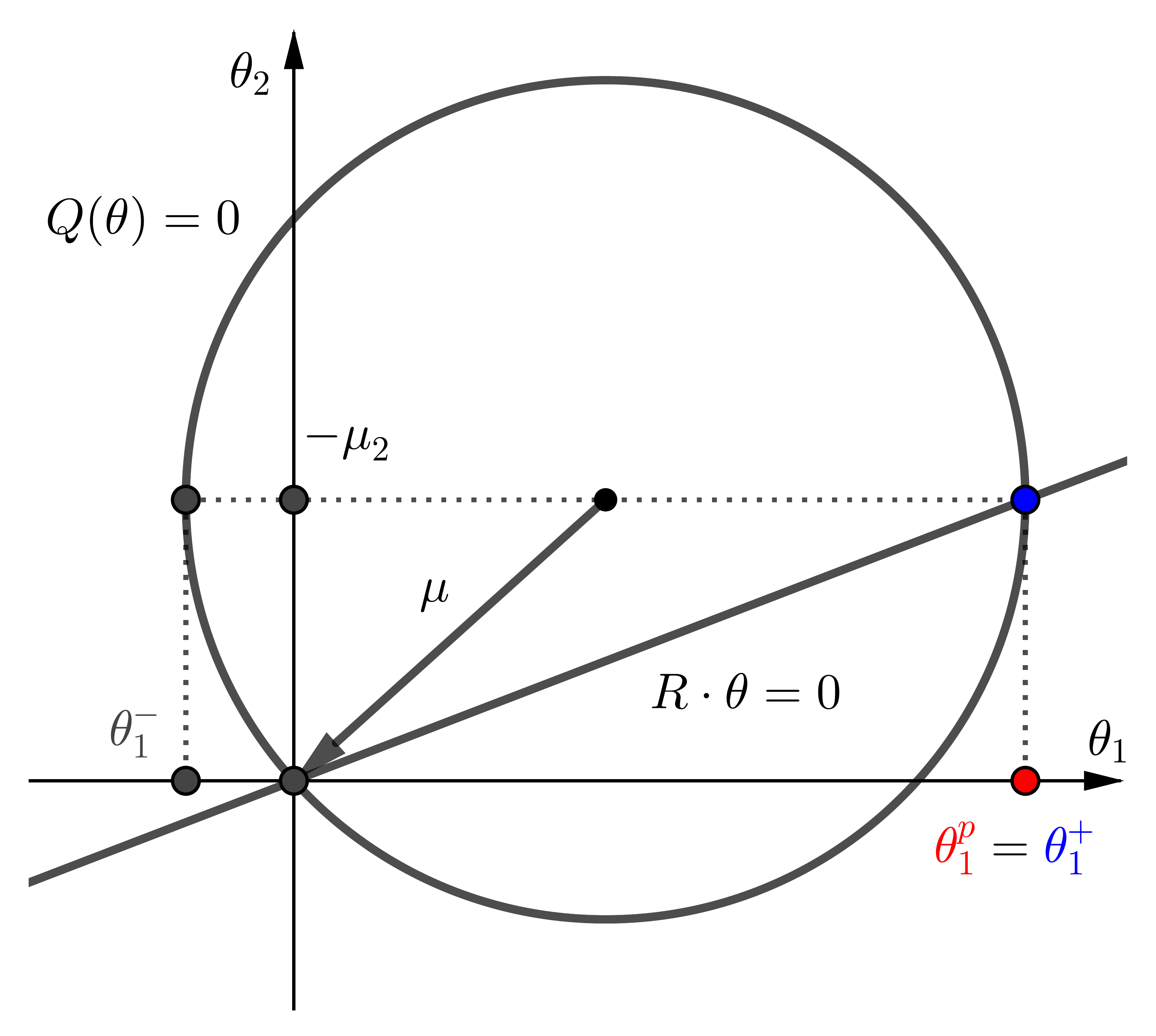}
\includegraphics[scale=0.14]{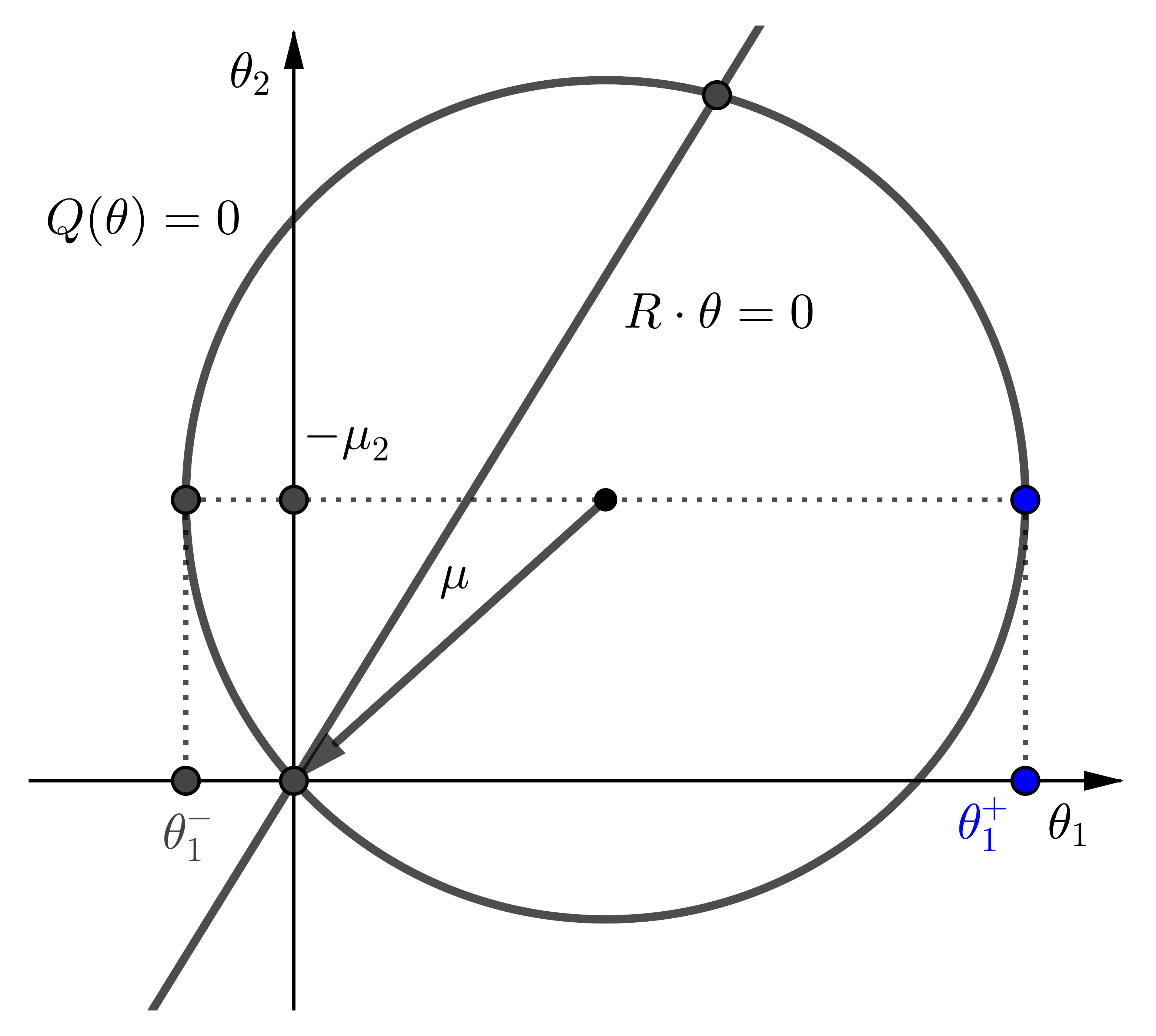}
\caption{From the left to the right: $\rems{R\cdot \theta^+}>0$, $\rems{R\cdot \theta^+}=0$, and $\rems{R\cdot \theta^+}<0$. \rems{Recall that ${R\cdot \theta^+}=r \theta_1^+ -\mu_2$.}} 
\label{fig:polecondition}
\end{figure}

\begin{prop}
The asymptotics of $\nu_1$ are given by
\begin{equation}
\nu_1 (z_1) 
\underset{z_1\to +\infty}{\sim}
\begin{cases}
A e^{-\theta_1^p z_1} & \text{if } \rems{R\cdot \theta^+}>0,
\\
B z_1^{-\frac{1}{2}} e^{-\theta_1^+ z_1} & \text{if } \rems{R\cdot \theta^+}=0,
\\
C z_1^{-\frac{3}{2}} e^{-\theta_1^+ z_1} & \text{if } \rems{R\cdot \theta^+}<0,
\end{cases}
\label{eq:asymptboudary}
\end{equation}
and by 
$$
\nu_1 (z_1) \underset{z_1\to -\infty}{\sim} D,
$$
where
\begin{equation*}
A 
=\frac{1}{r^2+1} \frac{(r^2-1)\mu_2 - 2r\mu_1}{ r\mu_2-\mu_1}, \quad
B=\frac{1}{\sqrt{\pi(\theta_1^+-\theta_1^-)}}, \quad
C=\frac{{\sqrt{(\theta_1^+-\theta_1^-)}}}{2\sqrt{\pi}(-r\theta_1^+ +\mu_2)^2},
\end{equation*}
and
\begin{equation*} D 
=\frac{\mu_2}{\mu_1 -r\mu_2}. 
\end{equation*}
The exact tail asymptotics of $\nu$, that is the asymptotics of $\nu ((z_1,\infty))$, are also given by equation~\eqref{eq:asymptboudary}, but with different constants: $A'=A/\theta_1^p$, $B'=B/\theta_1^+$ and $C'=C/\theta_1^+$.
\label{prop:asymptnu}
\end{prop}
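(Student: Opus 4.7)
The plan is to invert the explicit formula for $g$ from Lemma~\ref{lem:MGFexplicit} by the Bromwich contour-deformation method. Since $g$ is the two-sided Laplace transform of $\nu_1$, and since $\nu_1$ is expected to decay exponentially at $+\infty$ and to approach a constant at $-\infty$, its strip of convergence has the form $\{0 < \Re\theta_1 < \theta^*\}$, where $\theta^* = \theta_1^p$ in Case~1 and $\theta^* = \theta_1^+$ in Cases~2 and~3. For any $c$ in this strip,
\begin{equation*}
\nu_1(z_1) = \frac{1}{2\pi i}\int_{c-i\infty}^{c+i\infty} g(\theta_1)\,e^{-\theta_1 z_1}\,d\theta_1,
\end{equation*}
so the asymptotic behaviour is driven by the rightmost (for $z_1\to+\infty$) or leftmost (for $z_1\to-\infty$) singularity of the meromorphic/branched continuation of $g$ provided by Lemma~\ref{lem:MGFexplicit}.

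For $z_1\to+\infty$, I would shift the contour to the right. In Case~1 one crosses the simple pole at $\theta_1^p$, and the residue, obtained by differentiating the denominator $-r\theta_1+\mu_2+\sqrt{(\theta_1^+-\theta_1)(\theta_1-\theta_1^-)}$ at $\theta_1^p$ and simplifying using $\theta_1^p = 2(r\mu_2-\mu_1)/(r^2+1)$, produces the claimed constant $A$; the leftover integral wrapped around the branch cut $[\theta_1^+,\infty)$ contributes only $O(z_1^{-3/2}e^{-\theta_1^+ z_1})$ and is exponentially smaller. In Cases~2 and~3 there is no pole before $\theta_1^+$, so the deformed contour hugs the branch cut and the integral reduces, after parametrising $\theta_1 = \theta_1^+ + t$, to $e^{-\theta_1^+ z_1}\int_0^\infty \psi(t)\,e^{-tz_1}\,dt$, where $\psi(t)$ is the jump of $g$ across the cut. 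The local expansion at the end of Lemma~\ref{lem:MGFexplicit} gives $\psi(t) \sim \text{const}\cdot t^{-1/2}$ in Case~2 and $\psi(t) \sim \text{const}\cdot t^{1/2}$ in Case~3; Watson's lemma together with $\Gamma(1/2)=\sqrt{\pi}$ and $\Gamma(3/2)=\sqrt{\pi}/2$ then delivers the $z_1^{-1/2}$ and $z_1^{-3/2}$ prefactors and matches the stated constants $B$ and $C$.

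For $z_1\to-\infty$, I would shift the contour to the left and cross the simple pole of $g$ at $0$. A direct residue computation (differentiate the denominator of $g$ at $\theta_1 = 0$ and use $\mu_2<0$ so that $\sqrt{\mu_2^2}=-\mu_2$) gives exactly $D = \mu_2/(\mu_1-r\mu_2)$; since $e^{-0\cdot z_1}=1$, this is the constant limit. Further deformation past the branch cut $(-\infty,\theta_1^-]$ adds only corrections of order $e^{-\theta_1^- z_1}$, which vanish as $z_1\to-\infty$ because $\theta_1^-<0$. Finally, the tail asymptotics of $\nu((z_1,+\infty))$ follow by integrating the pointwise asymptotics once; in each regime the exponential rate is preserved and the leading constant is divided by the rate, yielding $A' = A/\theta_1^p$, $B' = B/\theta_1^+$, $C' = C/\theta_1^+$.

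The main technical obstacle is the rigorous justification of these contour shifts: one must bound $g$ along the horizontal segments at large $|\Im\theta_1|$ and show that the branch-cut integrals away from the dominant singularity are genuinely of lower order than the extracted residues or local expansions. Both bounds can be extracted from the closed-form expression~\eqref{eq:value:g}, whose denominator grows linearly in $|\theta_1|$ away from $\theta_1^\pm$, but some care is needed to make the estimates uniform in a small neighbourhood of the branch points, and to ensure that the contribution of the branch cut on the far side ($(-\infty,\theta_1^-]$ for $z_1\to+\infty$, and $[\theta_1^+,\infty)$ for $z_1\to-\infty$) is indeed negligible relative to the claimed leading term.
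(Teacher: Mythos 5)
Your proposal is correct and is essentially the same approach as the paper's. The paper invokes classical transfer theorems (Doetsch; Dai--Miyazawa) that encapsulate precisely the Bromwich contour-deformation argument you spell out: the rightmost singularity of $g$ past the convergence strip drives $\nu_1(z_1)$ as $z_1\to+\infty$ (the pole at $\theta_1^p$ in Case~1, the algebraic branch point at $\theta_1^+$ with exponent $\pm\tfrac12$ in Cases~2 and~3), and the leftmost singularity at $0$ drives the behaviour as $z_1\to-\infty$; the residues computed from $h'(0)$ and $h'(\theta_1^p)$ give $D$ and $A$, while the local expansion of $g$ at $\theta_1^+$ combined with $\Gamma(\tfrac12)$ and $\Gamma(-\tfrac12)$ gives $B$ and $C$. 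You re-derive the transfer theorems inline via Watson's lemma rather than citing them, which is a fine, self-contained alternative; you also correctly flag the one genuine technical burden — controlling the contour tails and the far branch-cut contribution — which the paper delegates to its references.
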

\begin{proof}
The above results are a direct consequence of Lemma~\ref{lem:MGFexplicit} and of classical transfer theorems which link the asymptotics of a function to the singularities of its Laplace transform. These theorems rely on the complex inversion formula of a Laplace transform. For a precise statement of these theorems,
we refer the reader to \cite[Theorem 37.1]{doetsch_introduction_1974}, 
\cite[Lemma C.2]{dai_reflecting_2011} and, most importantly, to \cite[Lemmas 6.2 and 6.3]{dai_stationary_2013}, as the latter directly works with the tail distribution. The methods we shall employ to obtain the exact asymptotics for the tail distribution of boundary measures are similar in each step to those in \cite[Section~6]{dai_stationary_2013}.  \\
\indent 
Let $a$ and $b$ be the singularities which define the strip of convergence of the bilateral Laplace transform $g(\theta_1)=\int_{\mathbb{R}} e^{\theta_1 z_1} \nu(\mathrm{d}z_1)$, i.e.
\,the integral converges for $a< \Re \theta_1<b$. Note that $g$ remains defined outside this strip thanks to its analytic continuation. 
For some constants $c,c_0$, and $k$, and for $\Gamma$ the gamma function, the classical transfer theorems imply as follows:
\begin{enumerate}[label=(\roman*)]
\item If 
\begin{equation*}
g(\theta_1)-c \underset{b}{\sim} \frac{c_0}{(b-\theta_1)^k},
\end{equation*}
then
\begin{equation*}
\nu_1 (z_1)
\underset{+ \infty}{\sim} b \nu ((z_1,\infty))
 \underset{+ \infty}{\sim} \frac{c_0 }{\Gamma(k)}z_1^{k-1}e^{-b z_1}.
 \end{equation*}
\item If 
\begin{equation*}
g(\theta_1)-c \underset{a}{\sim} \frac{c_0}{(\theta_1-a)^k},
\end{equation*}
then
\begin{equation*}
\nu_1 (z_1) \underset{- \infty}{\sim}  \frac{c_0 }{\Gamma(k)}(-z_1)^{k-1}e^{-a z_1}.
\end{equation*}
\end{enumerate}
\indent \indent We now apply the consequences in (i) and (ii) above to the {study} of the singularities of $g$ in Lemma~\ref{lem:MGFexplicit}. For $r \theta_1^+ -\mu_2\leqslant 0$, the convergence strip of the integral which defines the Laplace transform has its extremities at $a=0$ and at $b=\theta_1^+$. For $r \theta_1^+ -\mu_2> 0$, the convergence strip of the integral has extremities at $a=0$ and $b=\theta_1^p$. Lemma~\ref{lem:MGFexplicit} gives
$$
g(\theta_1)\underset{0}{\sim} \frac{Res_0(g)}{\theta_1},
$$
and so $a=0$, $c_0=Res_0(g)$, $k=1$, $\Gamma(1)=1$. The transfer theorems then imply that $$\nu (z_1) \underset{z_1\to -\infty}{\sim} Res_0(g).$$
We now apply Lemma~\ref{lem:MGFexplicit} to obtain the following asymptotics in $+\infty$ for the three distinct cases given below. 
\begin{enumerate}[label=(\arabic*)]
\item If $\rems{R\cdot \theta^+}=r \theta_1^+ -\mu_2< 0$, then
$$
g(\theta_1)- \frac{1}{-r\theta_1^+ +\mu_2}
\underset{\theta_1^+}{\sim}
- \frac{1}{(-r\theta_1^+ +\mu_2)^2} {\sqrt{(\theta_1^+ -\theta_1)(\theta_1^+-\theta_1^-)}}\,,
$$
and so $b=\theta_1^+$, $c_0=- \frac{\sqrt{(\theta_1^+-\theta_1^-)}}{(-r\theta_1^+ +\mu_2)^2}  $, $k=-\frac{1}{2}$, $\Gamma(-\frac{1}{2})=-2\sqrt{\pi}$. By the transfer theorems, 
\begin{equation*}
\nu (z_1) \underset{z_1\to +\infty}{\sim} Cz_1^{-\frac{3}{2}}e^{-\theta_1^+ z_1}.
\end{equation*}
\item
If $\rems{R\cdot \theta^+}=r \theta_1^+ -\mu_2=0$, then
$$
g(\theta_1)\underset{\theta_1^+}{\sim} \frac{1}{\sqrt{(\theta_1^+ -\theta_1)(\theta_1^+-\theta_1^-)}}\,,
$$
and so  $b=\theta_1^+$, $c_0=\frac{1}{\sqrt{(\theta_1^+-\theta_1^-)}}  $, $k=\frac{1}{2}$, $\Gamma(\frac{1}{2})=\sqrt{\pi}$. By the transfer theorems, 
\begin{equation*}
\nu (z_1) \underset{z_1\to +\infty}{\sim} Bz_1^{-\frac{-1}{2}}e^{-\theta_1^+ z_1}.
\end{equation*}
\item If $\rems{R\cdot \theta^+}=r \theta_1^+ -\mu_2> 0$, then
$$
g(\theta_1)\underset{\theta_1^p}{\sim} \frac{Res_{\theta_1^p}(g)}{\theta_1-\theta_1^p}\,,
$$
and so $b=\theta_1^p$, $c_0=Res_{\theta_1^p} (g)$, $k=1$, $\Gamma(1)=1$. By the transfer theorems, 
\begin{equation*}
\nu (z_1) \underset{z_1\to +\infty}{\sim} -Res_{\theta_1^p} (g) e^{-\theta_1^+ z_1}.
\end{equation*}
\end{enumerate}
\indent  \indent \indent We proceed to compute the residues to obtain explicit expressions for the constants. Let
\begin{equation*}
h(\theta_1)  := -r\theta_1 + \mu_2 + \sqrt{\left(\mu_1^2 + \mu_2^2\right) - (\theta_1 + \mu_1)^2}.
\end{equation*}
The first derivative of $h(\theta_1)$ is
\begin{equation*}
h'(\theta_1) = -r - \frac{\theta_1 + \mu_1}{\sqrt{\left(\mu_1^2 + \mu_2^2\right) - (\theta_1 + \mu_1)^2}}.
\end{equation*}
Since $\theta_1 =0$ and $\theta_1 = \theta_1^{p}$ are simple zeros of $h(\theta_1)$, we have that
\begin{equation}
\frac{1}{Res_{0}(g)} = {h'(0)} = -r - \frac{\mu_1}{|\mu_2|} = \frac{\mu_1 -r\mu_2}{\mu_2}.
\label{eq:res0}
\end{equation}
Then 
\begin{equation}
\frac{1}{Res_{\theta_1^p}(g)} = {h'\left(\theta_1^p\right)}
= (1+r^2) \,\frac{\mu_1 - r\mu_2}{(r^2-1)\mu_2 - 2r\mu_1},
\label{eq:resp}
\end{equation}
provided that $\theta_1^p$ is a zero of $h(\theta_1)$. 
Equations \eqref{eq:res0} and \eqref{eq:resp} give the values of $A$ and $D$, thereby completing the proof.
\end{proof}

\section{Inverse Laplace transform}
\label{sec:inverselaplace}
\noindent The transfer lemmas in the previous section only apply to univariate functions, and hence cannot be applied to the function $f$. In order to obtain the asymptotics of the occupancy density $\pii$, we first invert the two dimensional Laplace transform $f$. We then proceed to reduce its inverse 
to a single valued integral which gives an explicit expression of $\pii$. All of the above tasks are accomplished by Proposition~\ref{lem:inverselaplace} below.

\begin{prop}
For any $(z_1,z_2)\in \mathbb{R}\times \mathbb{R}_+$ and $\epsilon>0$ sufficiently small, 
the density occupancy measure can be written as
$$
\pii(z_1,z_2)=
\frac{-1}{i\pi} \int_{\epsilon-i\infty}^{\epsilon+i\infty}  \frac{ e^{-z_1\theta_1-z_2\Theta_2^+(\theta_1)}}{r\theta_1+\Theta_2^-(\theta_1)}    \mathrm{d}\theta_1
=\frac{1}{i\pi} \int_{\epsilon-i\infty}^{\epsilon+i\infty}  e^{-z_1\theta_1-z_2\Theta_2^+(\theta_1)}g(\theta_1)    \mathrm{d}\theta_1.
$$
\label{lem:inverselaplace}
\end{prop}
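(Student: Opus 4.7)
The approach is to perform a two-stage Laplace inversion of the kernel functional equation of Proposition~\ref{prop:functionaleq}. Since $Q(\theta)\neq 0$ in the interior of $F$, equation~\eqref{eq:functional_eq} rearranges to
$$
f(\theta) \;=\; -\,\frac{1 + (R\cdot\theta)\, g(\theta_1)}{Q(\theta)}.
$$
Because $\pii$ is supported on $\mathbb{R}\times\mathbb{R}_+$, $f$ is a unilateral Laplace transform in $\theta_2$ and a bilateral one in $\theta_1$. The plan is to first fix $\Re\theta_1 = \epsilon \in (0, \theta_1^p)$ and to recover $F_{\theta_1}(z_2) := \int_{\mathbb{R}} e^{\theta_1 z_1}\,\pii(z_1, z_2)\,\mathrm{d}z_1$ via a Bromwich inversion in $\theta_2$ along a vertical line $\Re\theta_2 = c$ with $c < 0$ close to $0$; by Lemma~\ref{lem:limitfinite} this line lies inside the strip of convergence, since it keeps $\theta$ inside $F$.

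For $z_2 > 0$ I would then close this contour to the right, where $e^{-\theta_2 z_2}$ decays. The factorization $Q(\theta_1,\cdot) = \tfrac{1}{2}(\theta_2 - \Theta_2^+(\theta_1))(\theta_2 - \Theta_2^-(\theta_1))$ shows that the only enclosed singularities of $f$ are simple poles at $\Theta_2^\pm(\theta_1)$. The decisive observation is that the residue at $\Theta_2^-(\theta_1)$ vanishes: by Lemma~\ref{lem:MGFexplicit}, $g(\theta_1) = -1/(r\theta_1 + \Theta_2^-(\theta_1))$, so the numerator $1 + (r\theta_1 + \theta_2)\,g(\theta_1)$ of $-f$ is identically $0$ at $\theta_2 = \Theta_2^-$; this cancellation is precisely the feature of the functional equation that makes a closed-form inversion possible. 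Only the pole at $\Theta_2^+$ contributes, and, accounting for the clockwise orientation, a short residue computation yields
$$
F_{\theta_1}(z_2) \;=\; \frac{-2\,e^{-\Theta_2^+(\theta_1) z_2}}{r\theta_1 + \Theta_2^-(\theta_1)} \;=\; 2\,g(\theta_1)\,e^{-\Theta_2^+(\theta_1) z_2}.
$$
A bilateral Laplace inversion in $\theta_1$ along $\Re\theta_1 = \epsilon$ then immediately produces the two equivalent integral representations of $\pii(z_1, z_2)$ in the statement.

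The main technical obstacle is the justification of closing the $\theta_2$-contour, i.e.\ showing that the integral over the right semicircular arc of radius $R$ vanishes as $R \to \infty$. Here $|Q(\theta_1,\theta_2)|$ grows like $|\theta_2|^2$ while $|R\cdot\theta|$ grows only linearly in $|\theta_2|$, so the integrand is $O(1/R)$ uniformly; combined with the decaying factor $e^{-z_2\Re\theta_2}$ for $z_2 > 0$ this enables a standard Jordan-type estimate, while the boundary case $z_2=0$ is obtained by continuity. A secondary point is to check that $\Theta_2^\pm(\theta_1)$ remain single-valued along the vertical line $\Re\theta_1 = \epsilon$; this is automatic because the cuts of the square root in \eqref{analytic} lie on $(-\infty,\theta_1^-]\cup[\theta_1^+,+\infty)$, and the chosen $\epsilon \in (0,\theta_1^p)\subset(0,\theta_1^+)$ is safely separated from them.
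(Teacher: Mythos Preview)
Your proposal is correct and follows essentially the same route as the paper: a double Laplace inversion, with the inner $\theta_2$-integral evaluated by closing the contour to the right half-plane and picking up the sole contributing pole at $\Theta_2^+(\theta_1)$. The only cosmetic difference is that the paper first simplifies $f$ algebraically to $f(\theta)=\dfrac{2}{(\theta_2-\Theta_2^+(\theta_1))(r\theta_1+\Theta_2^-(\theta_1))}$, which is precisely your residue-cancellation observation at $\Theta_2^-$ performed in advance.
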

\begin{proof}
By Proposition~\ref{prop:functionaleq}, the Laplace transform $f(\theta_1,\theta_2)$ converges in the set $F$  which, for $\epsilon>0$ sufficiently small, contains $(\epsilon+i\mathbb{R})\times (i\mathbb{R})$. Then, Laplace transform inversion (\cite[Theorem 24.3 and 24.4]{doetsch_introduction_1974} and \cite{brychkov_multidimensional_1992})
gives
\begin{align*}
\pii(z_1,z_2) &=\frac{1}{(2\pi i)^2} \int_{\epsilon-i\infty}^{\epsilon+i\infty} \int_{-i\infty}^{i\infty} e^{-z_1\theta_1-z_2\theta_2} f(\theta_1,\theta_2) \mathrm{d}\theta_1 \mathrm{d}\theta_2.
\end{align*}
Recall from Section \ref{sec:functionaleq} the kernel 
\begin{equation*}
Q(\theta)=\frac{1}{2}(\theta_2-\Theta_2^+(\theta_1))(\theta_2-\Theta_2^-(\theta_1)).
\end{equation*}
Equations \eqref{eq:functional_eq} and \eqref{eq:value:g} yield that
$$
f(\theta_1,\theta_2)=\frac{-1-(R \cdot \theta) g(\theta_1)}{Q(\theta)} =
\frac{2}{(\theta_2-\Theta_2^+(\theta_1))(r\theta_1+\Theta_2^-(\theta_1))},
$$
and 
$$
\pii(z_1,z_2) =
\frac{1}{2\pi i} \int_{\epsilon-i\infty}^{\epsilon+i\infty}  \frac{2 e^{-z_1\theta_1}}{r\theta_1+\Theta_2^-(\theta_1)}   \left(  \frac{1}{2 \pi i} \int_{-i\infty}^{+i\infty} e^{-z_2\theta_2} 
\frac{1}{\theta_2-\Theta_2^+(\theta_1)}
\mathrm{d}\theta_2 \right) \mathrm{d}\theta_1.
$$
We now need show that
\begin{equation}
\frac{1}{2\pi i} \int_{-i\infty}^{+i\infty} 
\frac{e^{-z_2\theta_2} }{\theta_2-\Theta_2^+(\theta_1)}
\mathrm{d}\theta_2
= - e^{-z_2\Theta_2^+ (\theta_1)}  .
\label{eq:intcauchy}
\end{equation}
For some $A>0$, denote the half circle 
\begin{equation*}
\mathcal{C}_A = \{ \theta_2\in\mathbb{C} : |\theta_2|=A \text{ and } \Re \theta_2>0 \}. 
\end{equation*}
We now employ Cauchy's integral formula, integrating on the closed contour of Figure \ref{fig:contour}. Paying close attention to the direction of orientation, we obtain
$$
\frac{1}{2\pi i} \left( \int_{iA}^{-iA}+\int_{\mathcal{C}_A} \right)  
\frac{e^{-z_2\theta_2} }{\theta_2-\Theta_2^+(\theta_1)}
\mathrm{d}\theta_2
= e^{-z_2\Theta_2^+ (\theta_1)}.
$$
Note that since we have assumed throughout that $\mu_2< 0$, we have $\Re \Theta_2^+(\theta_1) >0$. It now remains to take the limit of the integrals when $A\to\infty$ and to show that the limit of $\int_{\mathcal{C}_A}$ is zero. Indeed, 
\begin{equation*}
\int_{\mathcal{C}_A} \frac{e^{-z_2\theta_2}}{\theta_2-\Theta_2^+(\theta_1)} 
\mathrm{d}\theta_2 = \int_{-\frac{\pi}{2}}^{\frac{\pi}{2}} \frac{e^{-z_2Ae^{it}}}{Ae^{it}-\Theta_2^+(\theta_1)}iAe^{it} \mathrm{d}t,
\end{equation*} 
which, by dominated convergence, converges to $0$ when $A\to\infty$. 
We thus obtain \eqref{eq:intcauchy}, completing the proof.

\begin{figure}[hbtp]
\centering
\includegraphics[scale=0.7]{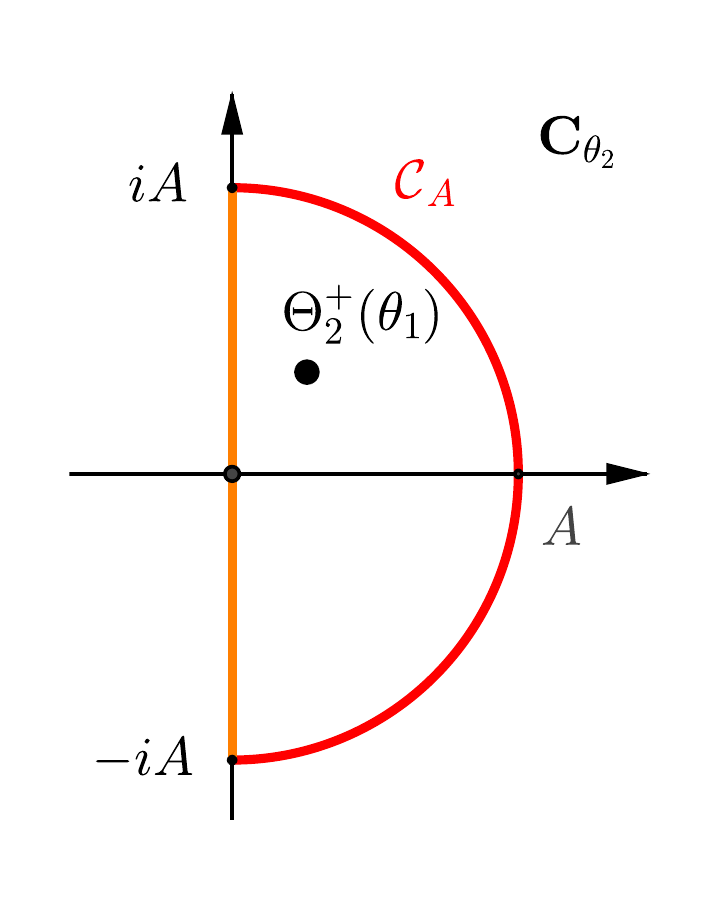}
\caption{Integration contour.}
\label{fig:contour}
\end{figure}

\end{proof}

\section{Saddle-point method and asymptotics} 
\label{sec:saddlepoint}
\noindent This goal of this section is to determine the exact asymptotic behavior of $\pii (\rho,\alpha)$ as $\rho\to \infty$ with $\alpha\in(0,\pi)$. Let $(\rho,\alpha)$ the polar coordinates of $z$, that is $\rho>0$, $\alpha\in(0,\pi)$ and $z=\rho e_\alpha$, where $e_\alpha=(\cos \alpha, \sin \alpha)$.
Let the saddle point be defined by
\begin{equation*}
\theta^\alpha :=(\theta^\alpha_1,\Theta_2^+(\theta^\alpha_1))=(-\mu_1+\cos\alpha \sqrt{\mu_1^2+\mu_2^2},-\mu_2+\sin\alpha \sqrt{\mu_1^2+\mu_2^2}),
\end{equation*}
and 
\begin{equation*}
\widetilde{\theta}^\alpha :=(\theta^\alpha_1,\Theta_2^-(\theta^\alpha_1))=(-\mu_1+\cos\alpha \sqrt{\mu_1^2+\mu_2^2},-\mu_2-\sin\alpha \sqrt{\mu_1^2+\mu_2^2}).
\end{equation*}
The poles are defined by
\begin{equation*}
\theta^p:=(\theta^p_1,\theta^p_2 )
=\left( \frac{2(r\mu_2-\mu_1) }{r^2+1},
 \Theta_2^+(\theta^p_1) \right),
\end{equation*}
and
$$
\theta^0 := (0, -2\mu_2).
$$
Recall that by Lemma~\ref{lem:MGFexplicit}, $\theta_1^0=0$ is a simple pole of $g(\theta_1)$. Further, if $\rems{R\cdot \theta^+}>0$, then $\theta_1^p$ is also a simple pole of $g$.
See Figure \ref{fig:cercle} below for a geometric interpretation of $\theta^\alpha$, $\theta^p$, $\theta^0$. We now proceed with the main theorem of the present paper.

\begin{figure}[hbtp]
\centering
\includegraphics[scale=0.2]{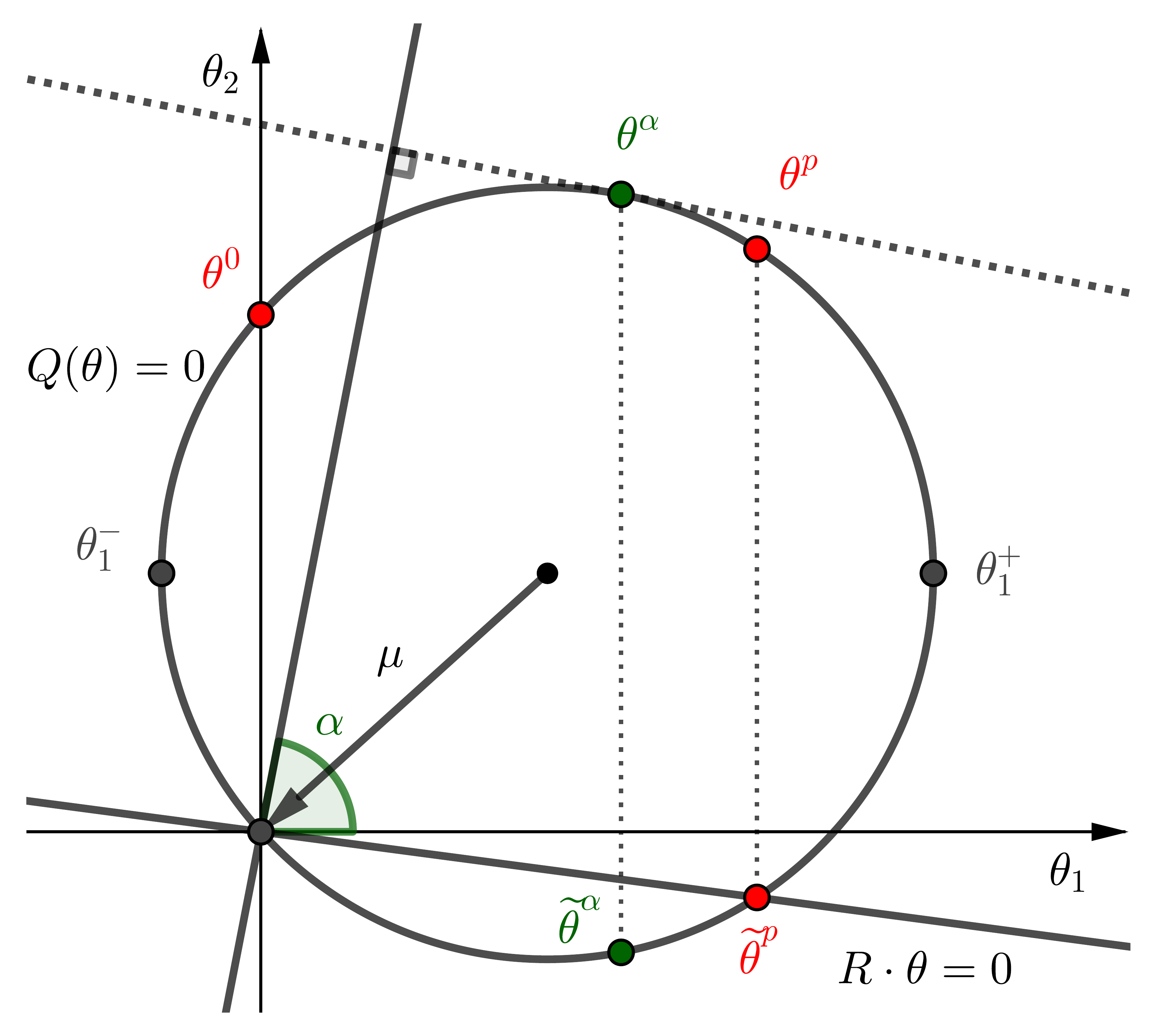}
\caption{The circle corresponds to $Q(\theta)=0$; the straight line corresponds to $R\cdot \theta =0$. The poles $\theta^p$ and $\theta^0$ are displayed in red and the saddle point $\theta^\alpha$ of $S$ is displayed in green.}
\label{fig:cercle}
\end{figure}

\begin{thm}
The asymptotic behavior of the occupancy density is given by
$$
\pii(\rho ,\alpha) \underset{\rho\to\infty}{\sim} 
\begin{cases}
 C_1{\rho^{-\frac{1}{2}}} e^{-\rho \theta^\alpha \cdot e_\alpha} & \text{if } 0<\theta^\alpha_1 < \theta^p_1 \text{ or } \rems{R\cdot \theta^+}\leqslant 0 ,
 \\
 C_2 e^{-\rho \theta^p \cdot e_\alpha} & \text{if } 0<\theta^p_1 
\leq \theta^\alpha_1  \text{ and } \rems{R\cdot \theta^+}>0 ,
 \\
  C_3 e^{-\rho \theta^0 \cdot e_\alpha} & \text{if } \theta^\alpha_1 
  \leq 0  ,
\end{cases}
$$
where 
$$
\theta^\alpha \cdot e_\alpha = -\mu \cdot e_\alpha + \Vert \mu \Vert,
\quad
\theta^p \cdot e_\alpha = \theta_1^p \cos \alpha +\theta_2^p\sin\alpha,
\quad
\theta^0 \cdot e_\alpha = -2\mu_2 \sin \alpha,
$$
and the constants satisfy
\begin{equation}
C_1= \sqrt{\frac{-2}{\pi  S''(\theta^\alpha_1)}} 
\frac{-1}{R\cdot \widetilde{\theta}^\alpha}
\quad
C_2= 2 (1+r^2) \,\frac{(r^2-1)\mu_2 - 2r\mu_1}{ r\mu_2-\mu_1 }, 
\quad
C_3=  \frac{2\mu_2}{\mu_1 -r\mu_2}
,
\label{eq:C2C3}
\end{equation}
when $\theta^\alpha_1\neq \theta^p_1  $ and $\theta^\alpha_1 \neq 0 $.
Furthermore, when a pole coincides with the saddle point, i.e. when $\theta^\alpha_1 =\theta^p_1 $ or $\theta^\alpha_1 = 0 $, the value of the constants $C_2$ and $C_3$ is half the value established in \eqref{eq:C2C3}. 
\label{thm:main}
\end{thm}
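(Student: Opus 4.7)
Combining the polar substitution $z=(\rho\cos\alpha,\rho\sin\alpha)$ with the integral representation established in Proposition~\ref{lem:inverselaplace}, the plan is to write
$$
\pii(\rho,\alpha) \;=\; \frac{1}{i\pi}\int_{\epsilon-i\infty}^{\epsilon+i\infty} e^{-\rho\,S(\theta_1)}\,g(\theta_1)\,\mathrm{d}\theta_1,\qquad S(\theta_1):=\cos\alpha\,\theta_1+\sin\alpha\,\Theta_2^+(\theta_1),
$$
and evaluate this integral by the saddle-point method. A direct computation from \eqref{analytic} yields the unique real saddle of $S$: solving $S'(\theta_1)=0$ produces $\theta_1^\alpha=-\mu_1+\|\mu\|\cos\alpha$, at which $S(\theta_1^\alpha)=\theta^\alpha\cdot e_\alpha=-\mu\cdot e_\alpha+\|\mu\|$ and $S''(\theta_1^\alpha)=-1/(\|\mu\|\sin^2\alpha)<0$, so the vertical line $\Re\theta_1=\theta_1^\alpha$ is the steepest-descent contour through this saddle.

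The next step is to deform the original vertical contour from $\Re\theta_1=\epsilon$ onto $\Re\theta_1=\theta_1^\alpha$. Cauchy's theorem, together with a standard decay estimate on the horizontal pieces at $\Im\theta_1=\pm R$ as $R\to\infty$ (using the boundedness of $g$ and the growth of $\Re\Theta_2^+$ on those segments), justifies the deformation at the cost of the residues picked up inside the strip between the two lines. By Lemma~\ref{lem:MGFexplicit}, the only candidate poles of $g$ are $\theta_1=0$ (always) and $\theta_1=\theta_1^p$ (present exactly when $R\cdot\theta^+>0$), and the three cases of the theorem correspond precisely to whether no pole, the pole $\theta_1^p$, or the pole $0$ separates $\epsilon$ from $\theta_1^\alpha$.

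On the steepest-descent contour, parametrizing $\theta_1=\theta_1^\alpha+it$ gives $S(\theta_1^\alpha+it)=S(\theta_1^\alpha)-\tfrac{1}{2}S''(\theta_1^\alpha)t^2+O(t^3)$ with $-S''(\theta_1^\alpha)>0$, producing a Gaussian-type integrand. The classical saddle-point formula then yields
$$
\frac{1}{i\pi}\int_{\theta_1^\alpha-i\infty}^{\theta_1^\alpha+i\infty} e^{-\rho S(\theta_1)}g(\theta_1)\,\mathrm{d}\theta_1\;\underset{\rho\to\infty}{\sim}\; g(\theta_1^\alpha)\sqrt{\frac{-2}{\pi\rho\,S''(\theta_1^\alpha)}}\,e^{-\rho\,\theta^\alpha\cdot e_\alpha},
$$
and the identity $g(\theta_1^\alpha)=-1/(R\cdot\widetilde{\theta}^\alpha)$ from Lemma~\ref{lem:MGFexplicit} recovers the $C_1\rho^{-1/2}$ leading term of Case~1. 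In Cases~2 and~3, the crossed residue contributes $\pm 2\,\mathrm{Res}_{\theta_1^p}(g)\,e^{-\rho\,\theta^p\cdot e_\alpha}$ or $\pm 2\,\mathrm{Res}_{0}(g)\,e^{-\rho\,\theta^0\cdot e_\alpha}$, with the sign determined by the direction of the shift. Because $\theta_1^\alpha$ is a local maximum of the real function $\theta_1\mapsto S(\theta_1)$, we have $S(\theta_1^p)\le S(\theta_1^\alpha)$ and $S(0)\le S(\theta_1^\alpha)$, so the residue contribution dominates the saddle one, and substituting the explicit residues of $g$ already computed in the proof of Proposition~\ref{prop:asymptnu} yields $C_2$ and $C_3$.

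The main difficulty will be the coalescent regime where $\theta_1^\alpha=\theta_1^p$ or $\theta_1^\alpha=0$: the pole sits exactly on the steepest-descent line and the naive contour deformation is obstructed. The plan is to indent the contour with a small semicircle around the pole, which contributes $\pi i\cdot\mathrm{Res}$ (half of a full residue loop) and yields exactly the halving of $C_2$ or $C_3$ stated in the theorem. Rigorously justifying this uniform asymptotic—together with the decay of the horizontal pieces used when closing the contour at infinity—will be the most technical step, but the structure of the answer is forced in each regime by the preceding structural results, and no new phenomenon beyond the standard saddle-versus-pole interplay should arise.
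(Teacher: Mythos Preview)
Your proposal is correct and follows essentially the same route as the paper: write $\pii(\rho,\alpha)$ via Proposition~\ref{lem:inverselaplace} as $\tfrac{1}{i\pi}\int e^{-\rho S(\theta_1)}g(\theta_1)\,\mathrm d\theta_1$, locate the real saddle $\theta_1^\alpha$ of $S$, shift the vertical contour through it, pick up residues of $g$ at $0$ or $\theta_1^p$ according to Lemma~\ref{lem:MGFexplicit}, and compare the saddle contribution with the residue contribution using $S(\theta_1^p),S(0)<S(\theta_1^\alpha)$; the coalescent case is handled by a half-residue indentation exactly as in the paper's Lemma~\ref{polesaddle}.

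One small inaccuracy worth correcting: the vertical line $\Re\theta_1=\theta_1^\alpha$ is \emph{not} the steepest-descent contour of $S$ (that curve, called $\gamma_\alpha$ in the paper, is where $\Im S$ is constant and is only tangent to the vertical at the saddle). Your argument still goes through on the vertical line because $b\mapsto\Re S(\theta_1^\alpha+ib)$ is monotone in $|b|$ (this is the content of Lemma~\ref{lem:technique}), so the Laplace-type estimate is valid there; but you should either prove that monotonicity directly or switch to the genuine steepest-descent curve as the paper does. The paper is also more explicit about the ``horizontal pieces at infinity'' estimate you defer (its Lemma~\ref{lem:intnegligeable}), which is where the growth of $\Re\Theta_2^+$ along those segments is actually used.
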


\begin{proof}
Let $S$ denote the function
$$
S(\theta_1)=\theta_1 \cos\alpha + \Theta_2^+(\theta_1) \sin \alpha.
$$
It is then straightforward to verify that
$$
\theta^\alpha_1=-\mu_1+\cos\alpha \sqrt{\mu_1^2+\mu_2^2}\,,
$$
is the saddle point of $S$, which means that $S'(\theta^\alpha_1)=0$ and $S''(\theta^\alpha_1)< 0$. 
By Proposition~\ref{lem:inverselaplace}, we have
$$
\pii(\rho ,\alpha)= \frac{-1}{i\pi} \int_{\epsilon-i\infty}^{\epsilon+i\infty}  \frac{ e^{-\rho S(\theta_1)}}{r\theta_1+\Theta_2^-(\theta_1)}    \mathrm{d}\theta_1
=\frac{1}{i\pi} \int_{\epsilon-i\infty}^{\epsilon+i\infty} e^{-\rho S(\theta_1)} g(\theta_1) 
\rems{ \mathrm{d}\theta_1}
.
$$
We now shift the contour of integration up to the saddle point (see Figure \ref{fig:contourcol} below). The curves of steepest descent are orthogonal. Let $\gamma_\alpha$ denote the steepest-descent contour near $\theta_1^\alpha$, that is $\Im S(\theta_1)=0$, which is orthogonal to the abscissa (for further details, see the orange curve on Figure~\ref{fig:contourcol} as well as the proof of Lemma~\ref{lem:saddlepoint} in the Appendix). We now proceed by analyzing two separate cases: $\theta_1^\alpha >0$ and $\theta_1^\alpha < 0$.
\begin{figure}[hbtp]
\centering
\includegraphics[scale=0.7]{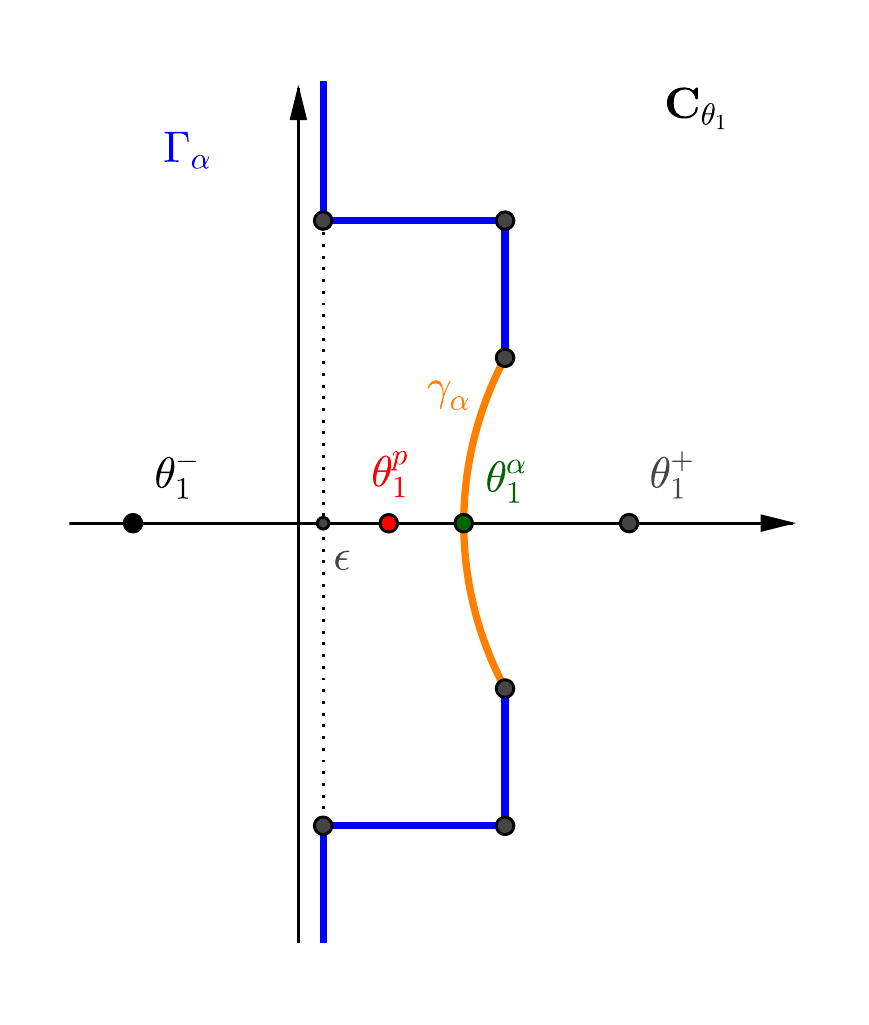}
\includegraphics[scale=0.7]{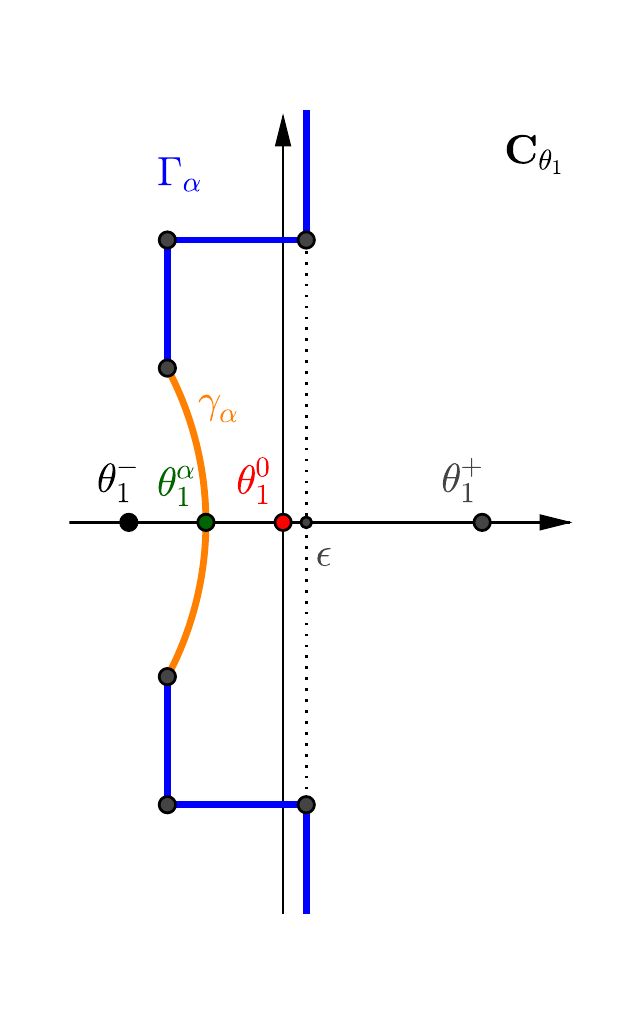}
\vspace{-0.7cm}
\caption{Shifting the contour. The left figure concerns the case $\theta_1^\alpha>0$. The right figure concerns the case $\theta_1^\alpha<0$.}
\label{fig:contourcol}
\end{figure}

\noindent \underline{Case I: $\theta_1^\alpha >0$.} Shifting the integration contour, it is possible to cross a simple pole $\theta_1^p$ coming from the zero $r\theta_1+\Theta_2^-(\theta_1)$, which itself is a pole of $g$. 
By Lemma~\ref{lem:MGFexplicit}, the function $g$ has a pole in $\theta^p_1$ if and only if $\rems{R\cdot \theta^+}>0$. Shifting the integration contour, a pole is then crossed if and only if $\theta_1^p<\theta_1^\alpha$ and $\rems{R\cdot \theta^+}>0$. Cauchy's formula gives
$$
\left( -\int_{\epsilon-i\infty}^{\epsilon+i\infty}+\int_{\gamma_\alpha}
+\int_{\Gamma_\alpha} \right) e^{-\rho S(\theta_1)} g(\theta_1) \mathrm{d}\theta_1=
\begin{cases}
0 & \text{if } 0< \theta^\alpha_1 < \theta^p_1 \text{ or }\rems{R\cdot \theta^+}\leqslant 0 ,
\\
2\pi i Res_{\theta_1^p} (g) e^{-\rho S(\theta_1^p)} & \text{if } 0<\theta^p_1 <\theta^\alpha_1  \text{ and } \rems{R\cdot \theta^+}>0  
 .
\end{cases}
$$
By the method of steepest descent (see \cite[\S 4 (1.53)]{fedoryuk_asymptotic_1989} as well as Lemma~\ref{lem:saddlepoint} in the Appendix),
$$
\int_{\gamma_\alpha}  e^{-\rho S(\theta_1)}g(\theta_1) \mathrm{d}\theta_1 
\underset{\rho\to\infty}{\sim} 
i \sqrt{\frac{-2\pi}{\rho S''(\theta^\alpha_1)}}  e^{-\rho \theta^\alpha \cdot e_\alpha}g(\theta_1^\alpha).
$$
Lemma~\ref{lem:intnegligeable} in the Appendix shows that the integral on the contour $\Gamma_\alpha$ is negligible compared to the integral on $\gamma_\alpha$. 
The asymptotics of $\pii$ are then given by the pole when $\theta^p_1 <\theta^\alpha_1$  (as $S(\theta_1^p)<S(\theta_1^\alpha)$), 
and by the saddle point otherwise.
We thus have that
$$
\pii(\rho ,\alpha) \underset{\rho\to\infty}{\sim} 
\begin{cases}
 \frac{C_1}{\sqrt{\rho}} e^{-\rho \theta^\alpha \cdot e_\alpha} & \text{if } \theta^\alpha_1 < \theta^p_1 \text{ or } \rems{R\cdot \theta^+}\leqslant 0 ,
 \\
 C_2 e^{-\rho \theta^p \cdot e_\alpha} & \text{if }\theta^p_1 < \theta^\alpha_1 \text{ and } \rems{R\cdot \theta^+}> 0 ,
\end{cases}
$$
where
\begin{equation*}
C_1= \sqrt{\frac{-2}{\pi  S''(\theta^\alpha_1)}} 
g(\theta_1^\alpha)
\quad \text{and} \quad
C_2= -2\text{Res}_{\theta_1^p} (g) = 2 (1+r^2) \,\frac{(r^2-1)\mu_2 - 2r\mu_1}{ r\mu_2-\mu_1 }  
.
\end{equation*}
The last equality above follows from \eqref{eq:resp}. Furthermore, from \eqref{eq:value:g} we have $g(\theta_1^\alpha)={-1} /{(R \cdot \widetilde{\theta}_\alpha)}$.
Lemma~\ref{polesaddle} of the Appendix deals with the final case in which $\theta_1^p=\theta_1^\alpha$. In this case the pole ``prevails'' and the asymptotics are given by $-\text{Res}_{\theta_1^p} (g)  e^{-\rho \theta^p \cdot e_\alpha} $.\\ 
 
\noindent \underline{Case II: $\theta_1^\alpha < 0$.} Shifting the integration contour, we cross the simple pole $\theta_1^0$ coming from the zero of $r\theta_1+\Theta_2^-(\theta_1)$, which itself is a pole of $g$.
Cauchy's formula then implies that, for $\theta^\alpha_1 < 0$,
$$
\left(\int_{\epsilon-i\infty}^{\epsilon+i\infty}+\int_{\gamma_\alpha}
+\int_{\Gamma_\alpha} \right) e^{-\rho S(\theta_1)} g(\theta_1) \mathrm{d}\theta_1=
2\pi i Res_0 (g) e^{-\rho S(0)}.
$$
The method of steepest descent (\cite[\S 4 (1.53)]{fedoryuk_asymptotic_1989}) yields
$$
\int_{\gamma_\alpha}  \frac{ e^{-\rho S(\theta_1)}}{r\theta_1+\Theta_2^-(\theta_1)}    \mathrm{d}\theta_1 
\underset{\rho\to\infty}{\sim} 
-i \sqrt{\frac{2\pi}{-\rho S''(\theta^\alpha_1)}}  \frac{e^{-\rho \theta^\alpha \cdot e_\alpha}}{(r\theta_1^\alpha+\Theta_2^-(\theta_1^\alpha)) }.
$$
Lemma~\ref{lem:intnegligeable} shows that the integral on the contour $\Gamma_\alpha$ is negligible in comparison to the integral on $\gamma_\alpha$. 
The asymptotics of $\pii$ are thus given by the pole since the contribution of the saddle point is negligible compared to that of the pole for $S(\theta_1^0)<S(\theta_1^\alpha)$. 
We thus have that
$$
\pii(\rho ,\alpha) \underset{\rho\to\infty}{\sim} 
  C_3 e^{-\rho \theta^0 \cdot e_\alpha}
   \text{ if }\theta^\alpha_1 < 0,
$$
where
\begin{equation*}
C_3=2\text{Res}_{0} (g)  = \frac{2\mu_2}{\mu_1 -r\mu_2}.
\end{equation*}
Note that the last equality above follows from  \eqref{eq:res0}.
The case in which $\theta_1^\alpha=0$ is relegated to Lemma~\ref{polesaddle} of the Appendix. In this final case, the asymptotics are given by $ \text{Res}_{0} (g) 
e^{-\rho \theta^0 \cdot e_\alpha}$. This concludes the proof {\textit{and closes Harrison's open problem}}.
\end{proof}


\begin{remark}
One can also use the saddle point method to determine asymptotics for all orders (see \cite[(1.22)]{fedoryuk_asymptotic_1989}). For all $n\in\mathbb{N}$, we have for some constants $c_k$ (where $c_0=C_1$) that
$$
\pii(\rho ,\alpha )=  C_2 e^{-\rho \theta^p \cdot e_\alpha}
\mathbf{1}_{\{ 0<\theta^p_1 \leq\theta^\alpha_1  \text{ and } r \theta_1^+ -\mu_2>0\}}
+
C_3 e^{-\rho \theta^0 \cdot e_\alpha}
\mathbf{1}_{\{ \theta^\alpha_1 \leqslant 0\}}
+
e^{-\rho \theta^\alpha \cdot e_\alpha} 
 \sum_{k=0}^n c_k {\rho^{-k-\frac{1}{2}}} 
 + o(e^{-\rho \theta^\alpha \cdot e_\alpha} \rho^{-n-\frac{1}{2}}).
$$

\end{remark}

\begin{remark}
The asymptotic behavior of the occupancy density for a non-reflected Brownian motion $B(t)+\mu t$ is given by $ \rho^{-\frac{1}{2}}e^{-\rho (\Vert \mu \Vert -\mu \cdot e_\alpha)}$.  Harrison explains this simpler case in his note \cite{Web}. Our results show that, 
when $0<\theta^\alpha_1 < \theta^p_1 \text{ or } \rems{R\cdot \theta^+}\leqslant 0$, the asymptotics are the same for both reflecting Brownian motion and for non-reflecting Brownian motion.
\end{remark}

Let $\alpha_\mu$ denote the angle between the $x$-axis and $-\mu$ (the opposite of the drift), and let $\alpha_R$ be the angle between the x-axis and $R$ (the reflection vector), as illustrated in Figure~\ref{fig:angles} below. 
\rems{We have $\alpha_R$ and $\alpha_\mu \in (0,\pi)$ and
$$
\tan \alpha_R = 1/r
\quad \text{and} \quad
\tan \alpha_\mu = \mu_1/\mu_2.
$$
Conditions \eqref{eq:conditiontrans} and \eqref{eq:conddriftneg} imply that $$\alpha_\mu < \alpha_R .$$}
As we have seen above, Theorem~\ref{thm:main} gives for a fixed angle $\alpha$ the asymptotic behavior of $\pii (\rho,\alpha)$ when $\rho\to\infty$ according to the value of the parameters $\mu$ and $R$. 
{It is also useful to state  the asymptotics  for fixed $\mu$ and $R$ and varying $\alpha$.} 
We do so in Corollary \ref{cor:asympt} below. See Figure~\ref{fig:angleasympt} for an illustration.

\begin{figure}[hbtp]
\centering
\includegraphics[trim = 0cm 1cm 0cm 1cm, scale=0.6]{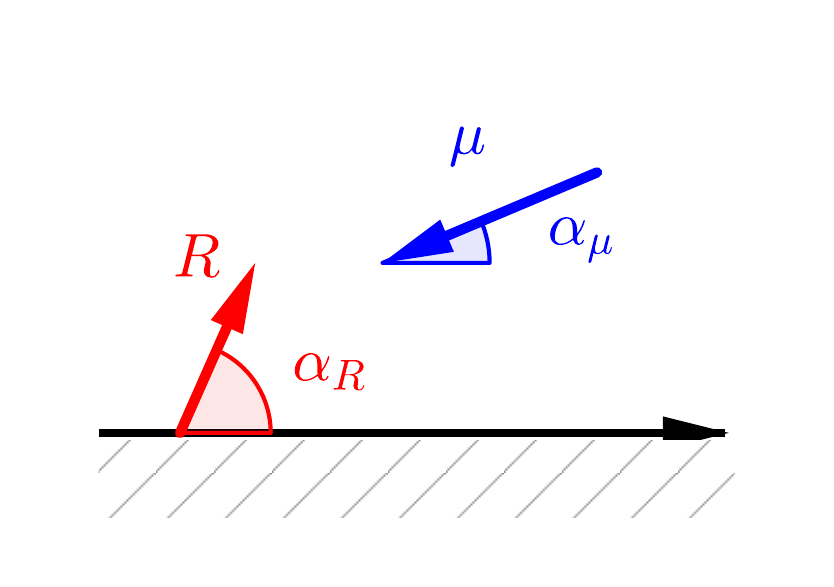}
\caption{Angles $\alpha_R$ and $\alpha_\mu$.}
\label{fig:angles}
\end{figure}

\begin{cor}
\label{cor:asympt}
Let us define
$$\alpha_0
:=\pi-\alpha_\mu \in(0,\pi)
\quad \text{and} \quad 
\alpha_1
:=\pi+\alpha_\mu-2\alpha_R\in (-\pi,\pi).$$ 
If \rems{$ \alpha_1\leqslant 0$}, then
$$
\pii(\rho ,\alpha) \underset{\rho\to\infty}{\sim} 
\begin{cases}
 C_1{\rho^{-\frac{1}{2}}} e^{-\rho \theta^\alpha \cdot e_\alpha} & \text{if } 0< \alpha < \alpha_0,
 \\
  C_3 e^{-\rho \theta^0 \cdot e_\alpha} & \text{if } \alpha_0 \leqslant \alpha <\pi  ,
\end{cases}
$$
and if \rems{$\alpha_1 > 0$}, then
$$
\pii(\rho ,\alpha) \underset{\rho\to\infty}{\sim} 
\begin{cases}
 C_1{\rho^{-\frac{1}{2}}} e^{-\rho \theta^\alpha \cdot e_\alpha} & \text{if }  \alpha_1 <\alpha < \alpha_0,
 \\
   C_2 e^{-\rho \theta^p \cdot e_\alpha} & \text{if }  0<\alpha \leqslant \alpha_1,
 \\
  C_3 e^{-\rho \theta^0 \cdot e_\alpha} & \text{if } \alpha_0 \leqslant \alpha <\pi .
\end{cases}
$$
\end{cor}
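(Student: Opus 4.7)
The plan is to deduce the corollary directly from Theorem~\ref{thm:main} by translating the theorem's hypotheses, which are stated as inequalities on $\theta_1^\alpha$ and on $R\cdot\theta^+$, into equivalent inequalities on the polar angle $\alpha$ while keeping $\mu$ and $R$ fixed. The three asymptotic regimes of the theorem correspond to at most three subintervals of $(0,\pi)$ separated by two critical angles $\alpha_0$ and $\alpha_1$, and the dichotomy $R\cdot\theta^+>0$ versus $R\cdot\theta^+\leqslant 0$ will coincide with the sign of $\alpha_1$.

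First I would observe that $\alpha\mapsto\theta_1^\alpha = -\mu_1 + \Vert\mu\Vert\cos\alpha$ is continuous and strictly decreasing on $(0,\pi)$; thus $\theta_1^\alpha > 0$ iff $\alpha < \alpha_0$, where $\alpha_0$ is the unique root of $\theta_1^\alpha = 0$ in $(0,\pi)$, and $\theta_1^\alpha > \theta_1^p$ iff $\alpha < \alpha_1$, where $\alpha_1$ is the root of $\theta_1^\alpha = \theta_1^p$ (possibly outside $(0,\pi)$). Using the parameterization $\mu_1 = -\Vert\mu\Vert\cos\alpha_\mu$, $\mu_2 = -\Vert\mu\Vert\sin\alpha_\mu$ that expresses the definition of $\alpha_\mu$ as the angle from the positive $x$-axis to $-\mu$ (with $\mu_2<0$ ensuring $\alpha_\mu\in(0,\pi)$), the equation $\theta_1^\alpha=0$ reads $\cos\alpha_0 = -\cos\alpha_\mu$, whence $\alpha_0 = \pi-\alpha_\mu$. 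For $\alpha_1$, I would substitute $\theta_1^p = 2(r\mu_2-\mu_1)/(r^2+1)$ and invoke the double-angle identities $\cos(2\alpha_R)=(r^2-1)/(r^2+1)$, $\sin(2\alpha_R)=2r/(r^2+1)$ (consequences of $\cos\alpha_R = r/\sqrt{r^2+1}$, $\sin\alpha_R = 1/\sqrt{r^2+1}$); the equation then simplifies to $\cos\alpha_1 = -\cos(2\alpha_R-\alpha_\mu)$, yielding $\alpha_1 = \pi + \alpha_\mu - 2\alpha_R$.

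Next I would establish the equivalence $R\cdot\theta^+>0 \Longleftrightarrow \alpha_1>0$. Expanding $R\cdot\theta^+ = r\theta_1^+ - \mu_2 = \Vert\mu\Vert\bigl(r(1+\cos\alpha_\mu)+\sin\alpha_\mu\bigr)$ and applying the half-angle identities $1+\cos\alpha_\mu = 2\cos^2(\alpha_\mu/2)$ and $\sin\alpha_\mu = 2\sin(\alpha_\mu/2)\cos(\alpha_\mu/2)$ factors the expression as a positive constant times $\cos(\alpha_R-\alpha_\mu/2)$; since $\alpha_R-\alpha_\mu/2\in(-\pi/2,\pi)$, this cosine is positive precisely when $2\alpha_R<\pi+\alpha_\mu$, i.e.\ when $\alpha_1>0$.

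It then remains to assemble the cases. If $\alpha_1\leqslant 0$, Case 2 of Theorem~\ref{thm:main} is vacuous, only $\alpha_0$ partitions $(0,\pi)$, and the two-region statement follows. If $\alpha_1>0$, the relation $\alpha_\mu<\alpha_R$ recorded just before Corollary~\ref{cor:asympt} forces $\alpha_1<\alpha_0$, so the three intervals $(0,\alpha_1]$, $(\alpha_1,\alpha_0)$, $[\alpha_0,\pi)$ align respectively with Cases 2, 1, 3 of Theorem~\ref{thm:main}, giving the three-region statement. The main obstacle is the trigonometric manipulation in the two previous paragraphs: recognizing $-\cos(2\alpha_R-\alpha_\mu)$ and $\cos(\alpha_R-\alpha_\mu/2)$ inside the algebraic expressions is the only non-routine step; the remainder is a direct reading-off of Theorem~\ref{thm:main}.
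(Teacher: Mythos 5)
Your argument is correct and follows the paper's overall strategy: both proofs deduce the corollary from Theorem~\ref{thm:main} by translating its conditions on $\theta_1^\alpha$ and on $R\cdot\theta^+$ into angle conditions, via the defining equations $\theta_1^{\alpha_0}=0$ and $\theta_1^{\alpha_1}=\theta_1^p$, and both use the double-angle formulas for $\alpha_R$ to identify $\cos\alpha_1=\cos(\pi+\alpha_\mu-2\alpha_R)$. Where you genuinely depart from the paper is the proof of the key sign equivalence $R\cdot\theta^+>0 \Leftrightarrow \alpha_1>0$: the paper computes $\tan\alpha_1=\tan(\alpha_\mu-2\alpha_R)$ explicitly in terms of $\mu$ and $r$, shows that $\tan\alpha_1=0$ exactly when $R\cdot\theta^+=0$, and then invokes conditions \eqref{eq:conditiontrans}--\eqref{eq:conddriftneg} to upgrade the borderline identity to the strict-inequality equivalence; you instead factorize
$R\cdot\theta^+ = 2\Vert\mu\Vert\,\cos(\alpha_\mu/2)\,\cos(\alpha_R-\alpha_\mu/2)/\sin\alpha_R$
via half-angle identities, so that the sign of $R\cdot\theta^+$ is read off directly from the location of $\alpha_R-\alpha_\mu/2$ in $(-\pi/2,\pi)$. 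Your version is more self-contained (no branch-of-tangent or implicit continuity step) at the price of one extra trigonometric identity; the paper's version is terser. The only small gap in your write-up is that you do not explicitly verify $\alpha_1=\pi+\alpha_\mu-2\alpha_R\in(-\pi,\pi)$, which the paper notes at the end of its proof, but this follows at once from $\alpha_\mu,\alpha_R\in(0,\pi)$ together with $\alpha_\mu<\alpha_R$.
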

\begin{proof}
The proof follows immediately from Theorem~\ref{thm:main} by defining $\alpha_0$ and $\alpha_1$ such that both $\theta_1^{\alpha_1} = \theta_1^p$ and $\theta_1^{\alpha_0} = 0$. Doing so, we obtain
\begin{equation*}
\cos \alpha_0= \frac{\mu_1}{\sqrt{\mu_1^2+\mu_2^2}}=\cos(\pi-\alpha_\mu),
\end{equation*}
and
\begin{equation*}
\cos \alpha_1= \frac{\mu_1+\theta_1^p}{\sqrt{\mu_1^2+\mu_2^2}}=\cos(\pi+\alpha_\mu-2\alpha_R).
\end{equation*}
\rems{We remark that $R\cdot \theta^+>0$ is equivalent to $\alpha_1>0$. Straightforward calculation yields  
$$\tan \alpha_1 = \tan (\alpha_\mu -2\alpha_R) = \frac{\frac{\mu_2}{\mu_1}-\frac{2r}{r^2-1}}{1+\frac{\mu_2}{\mu_1}\frac{2r}{r^2-1}}.$$
Then $\tan \alpha_1=0$ is equivalent to $\frac{\mu_2}{\mu_1}-\frac{2r}{r^2-1}=0$. Under conditions \eqref{eq:conditiontrans} and \eqref{eq:conddriftneg}, this is equivalent to $R\cdot \theta^+=0$ . We conclude the proof by noting that $\pi+\alpha_\mu-2\alpha_R\in (-\pi,\pi)$.
}
\end{proof}
\begin{figure}[H]
\centering
\includegraphics[trim = 0cm 1cm 0cm 0cm,scale=0.8]{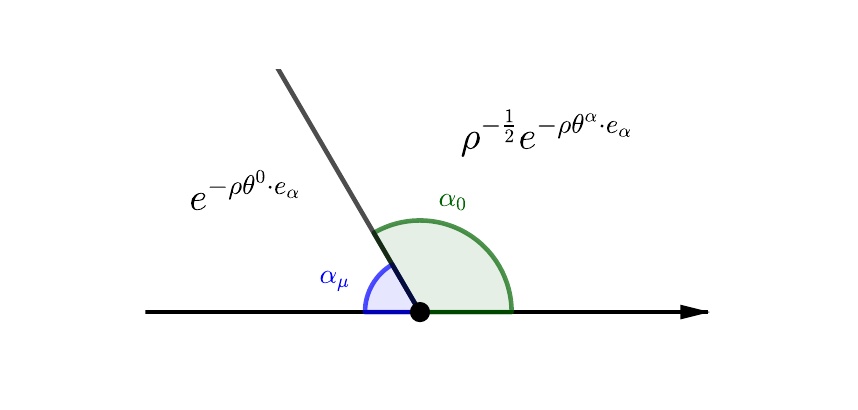}
\includegraphics[trim = 0cm 1cm 0cm 0cm,scale=0.8]{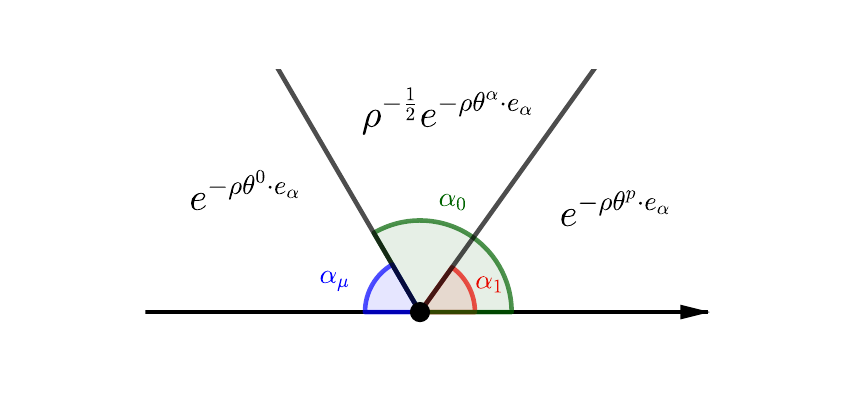}
\caption{Asymptotics by direction. The figure to the left considers the case \rems{$ \alpha_1 \leqslant 0$} and the figure to the right considers the case \rems{$ \alpha_1> 0$}.}
\label{fig:angleasympt}
\end{figure}

\section{Martin boundary}
\label{sec:martin}

\noindent {The goal of this section is to obtain the Martin boundary and the corresponding harmonic functions for the diffusion processes studied in this article. To this end, we recall the notion of harmonic function for a Markov process as well as the key relevant results from Martin boundary theory. We then proceed with the result in Proposition~\ref{prop:martin}.}

\indent Let $X(t)$ be a transient Markov process {on a state space $M$ (for example the upper half plane) and with transition density $p_t(x,y)$.} We recall a few definitions below. 

\begin{deff}
A function $h$ is harmonic in $M$ for the process $X$ (or $p_t$-harmonic) if the mean value property
$$\mathbb{E}_x \left[ h(X_{\tau_K}) \right] = h(x)$$
is satisfied for every compact $K\subset M$, where $\tau_K$ is the first exit time of $X$ from $K$. 
\end{deff}

\begin{deff}
The function $h$ is $p_t$-superharmonic if $\mathbb{E}_x \left[ f(X_{\tau_K}) \right] \leq f(x)$ for all compact $K$.
\end{deff}

\begin{deff} A non-negative harmonic function $h$ is minimal if for each harmonic function $g$ such that $0 \leq g \leq h$ we have $g = ch$ for some constant~$c$.
\end{deff}

\noindent The $\mathcal{C}^2$ harmonic functions for {$Z$, the reflected Brownian motion (RBM) in the upper half-plane}, are the functions which cancel the generator and the boundary generator, i.e. the functions $h\in \mathcal{C}^2 (\mathbb{R}\times\mathbb{R}_+)$ such that 
\begin{equation} \label{Martin1}
\mathcal{L}h =0,
\end{equation}
on the half plane and 
\begin{equation} \label{Martin2}
R\cdot \nabla h =0 
\end{equation}
on the abscissa. {This can be directly shown by the equality in \eqref{Ito}.} {Equations \eqref{Martin1} and \eqref{Martin2} imply} that a function is $p_t$-harmonic if it satisfies the classical Dirichlet problem in the half-plane with the oblique Neumann boundary condition.\\
\indent {We now recall a few relevant key results} in Martin boundary theory (for further details on Martin boundary theory, the reader may consult \cite{chung2006}, \cite{kunita1963,kunita1965}, and \cite{martin1941}). As in \eqref{def:greenfunction}, the Green's function is equal to 
\begin{equation*}
\pii^{x}(y):=\int_0^\infty p_t(x,y) \mathrm{d} t.
\end{equation*}
For some reference state $x_0$, the Martin kernel is defined {as}
$$
k^x_y:=\frac{\pii^{x}(y)}{\pii^{x_0}(y)}.
$$
The Martin compactification $\overline{M}$ is the smallest compactification of $M$ such that $y\mapsto k^x_y$ extends continuously. The Martin boundary is {defined as} the set
$$
\partial M := \overline{M} \setminus M.
$$
The function $x\mapsto k^x_y$ is superharmonic for all $y\in \overline{M}$. The {``minimal''} Martin boundary is defined~by
$$
\partial_m M := \{ y\in \partial M : 
x\mapsto k^x_y \text{ is minimal harmonic} \}.
$$
Finally, for any non-negative $p_t$-harmonic function $h$, there exists a unique finite measure $m$ such that for all $x\in M$,
$$
h(x)=\int_{\partial_m M} k^x_y m(\mathrm{d}y).
$$
{With these definitions and key results on Martin boundary theory in hand, we turn to Proposition \ref{prop:martin}.}
\begin{prop} Let $Z$ be the oblique RBM in the half plane starting from $x$ and let $k^x_y$ be its Martin kernel for the reference state $x_0=(0,0)$. Let us take $y=\rho e_\alpha$.
If $ \rems{\alpha_1}\leqslant 0$, then
$$
\lim_{\rho\to\infty} k^x_y =
\begin{cases}
 \left({(R\cdot {\theta}^\alpha)} e^{\widetilde{\theta}^\alpha\cdot x} - {(R\cdot \widetilde{\theta}^\alpha)} e^{{\theta}^\alpha\cdot x} \right)
\frac{ 1}{(\theta_2^\alpha- \widetilde{\theta}_2^\alpha)}  & \text{if } 0< \alpha < \alpha_0,
 \\
   1 & \text{if } \alpha_0 \leqslant \alpha <\pi  ,
\end{cases}
$$
and if $ \rems{\alpha_1} > 0$, then
$$
\lim_{\rho\to\infty} k^x_y =
\begin{cases}
  \left({(R\cdot {\theta}^\alpha)} e^{\widetilde{\theta}^\alpha\cdot x} - {(R\cdot \widetilde{\theta}^\alpha)} e^{{\theta}^\alpha\cdot x} \right)
\frac{ 1}{(\theta_2^\alpha- \widetilde{\theta}_2^\alpha)} & \text{if }  \alpha_1 <\alpha < \alpha_0,
 \\
 e^{\widetilde{\theta}^p \cdot x} & \text{if }  0<\alpha \leqslant \alpha_1,
 \\
   1 & \text{if } \alpha_0 \leqslant \alpha <\pi ,
\end{cases}
$$
where $\alpha_0$ and $\alpha_1$ are as defined in Corollary \ref{cor:asympt}. 
The Martin boundary coincides with the minimal Martin boundary and is homeomorphic to $[0,\alpha_0]$ if  $ \rems{\alpha_1}\leqslant 0$ and {is homeomorphic} to $[\alpha_1,\alpha_0]$ if  $ \rems{\alpha_1}> 0$. The {above} limits give all the harmonic functions of the minimal Martin boundary.
\label{prop:martin}
\end{prop}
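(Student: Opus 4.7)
The plan is to extend the entire analytic chain of the paper (functional equation, boundary MGF, Laplace inversion, saddle-point asymptotics) from initial state $(0,0)$ to arbitrary $x$, and then read off the limit of the Martin kernel $k^x_y=\pi^x(y)/\pi^{x_0}(y)$ by taking ratios of the asymptotic prefactors. First I would rerun Itô's formula \eqref{Ito} from $Z(0)=x$, so the constant $1$ in Proposition~\ref{prop:functionaleq} becomes $e^{\theta\cdot x}$, and then kill the kernel on $\theta_2=\Theta_2^-(\theta_1)$ exactly as in Lemma~\ref{lem:MGFexplicit} to obtain
$$g^x(\theta_1)=\frac{-e^{\widetilde{\theta}\cdot x}}{R\cdot\widetilde{\theta}},\qquad \widetilde{\theta}=(\theta_1,\Theta_2^-(\theta_1)).$$
Substituting back into $f^x$ and performing the Cauchy inversion in $\theta_2$ as in Proposition~\ref{lem:inverselaplace} yields
$$\pi^x(\rho,\alpha)=\frac{1}{i\pi}\int_{\epsilon-i\infty}^{\epsilon+i\infty} e^{-\rho S(\theta_1)}\,h^x(\theta_1)\,d\theta_1,\qquad h^x(\theta_1):=\frac{(R\cdot\widetilde{\theta})e^{\theta\cdot x}-(R\cdot\theta)e^{\widetilde{\theta}\cdot x}}{(R\cdot\widetilde{\theta})(\theta_2-\widetilde{\theta}_2)},$$
where $\theta=(\theta_1,\Theta_2^+(\theta_1))$ and $S$ is as in Theorem~\ref{thm:main}; this reduces to the formula of Proposition~\ref{lem:inverselaplace} at $x=(0,0)$ since then $h^x=g$.

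Next I would apply the contour shift and steepest-descent arguments of Theorem~\ref{thm:main} verbatim with $h^x$ in place of $g$. The singularities of $h^x$ in the relevant strip are unchanged (simple poles at the zeros $\theta_1=0$ and, when $R\cdot\theta^+>0$, $\theta_1=\theta_1^p$ of $R\cdot\widetilde{\theta}$), and the $\rho$-dependent factors $\rho^{-1/2}$ or $1$ together with $e^{-\rho\theta^{\sharp}\cdot e_\alpha}$ agree between $x$ and $x_0$, hence cancel in the ratio $k^x_y$. In the saddle regime the limit is
$$\frac{h^x(\theta_1^\alpha)}{g(\theta_1^\alpha)}=\frac{(R\cdot\theta^\alpha)e^{\widetilde{\theta}^\alpha\cdot x}-(R\cdot\widetilde{\theta}^\alpha)e^{\theta^\alpha\cdot x}}{\theta_2^\alpha-\widetilde{\theta}_2^\alpha},$$
after multiplying numerator and denominator of the left-hand side by $-R\cdot\widetilde{\theta}^\alpha$; in the $\theta_1^p$-pole regime the ratio of residues reduces to $e^{\widetilde{\theta}^p\cdot x}$ via the identity $R\cdot\widetilde{\theta}^p=0$, which forces $R\cdot\theta^p=\theta_2^p-\widetilde{\theta}_2^p$; and in the $\theta_1=0$ regime $\widetilde{\theta}=0$, so the residue ratio collapses identically to $1$. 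These are the three limits of the proposition.

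To finish, I would verify that each limit $h^\alpha$ is $p_t$-harmonic in the sense of \eqref{Martin1}--\eqref{Martin2}: every summand is of the form $e^{\vartheta\cdot x}$ with $Q(\vartheta)=0$, so $\mathcal{L}h^\alpha=0$; the oblique Neumann condition on $\{x_2=0\}$ is immediate for $h=1$ and for $h=e^{\widetilde{\theta}^p\cdot x}$ (since $R\cdot\widetilde{\theta}^p=0$), and for the saddle family it holds because on the boundary $e^{\theta^\alpha\cdot x}=e^{\widetilde{\theta}^\alpha\cdot x}=e^{\theta_1^\alpha x_1}$, so
$$R\cdot\nabla h^\alpha\big|_{x_2=0}\,\propto\,(R\cdot\theta^\alpha)(R\cdot\widetilde{\theta}^\alpha)-(R\cdot\widetilde{\theta}^\alpha)(R\cdot\theta^\alpha)=0.$$
The Martin boundary topology is then dictated by Corollary~\ref{cor:asympt}: all $\alpha\in[\alpha_0,\pi)$ give the constant limit $1$ and collapse to one boundary point, and when $\alpha_1>0$ all $\alpha\in(0,\alpha_1]$ collapse to $e^{\widetilde{\theta}^p\cdot x}$; the saddle range maps injectively and continuously to distinct harmonic functions, and the endpoint matchings at $\alpha_0$ and $\alpha_1$ (where the saddle collides with a pole, cf.\ Lemma~\ref{polesaddle}) produce the claimed homeomorphism with $[0,\alpha_0]$ or $[\alpha_1,\alpha_0]$. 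The main obstacle I anticipate is \emph{minimality}: that every $h^\alpha$ is a minimal harmonic function and not merely harmonic. For this I would invoke Choquet-type representation on the Martin compactification of the transient diffusion $Z$; having exhibited the complete family of Martin-kernel limits, any non-negative harmonic $0\leq f\leq h^\alpha$ admits a representing measure on the minimal boundary which, by the uniqueness of such representations for extremal exponential-type functions, must be concentrated at the single boundary point producing $h^\alpha$, yielding $f=c\,h^\alpha$ as required.
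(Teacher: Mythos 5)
Your proposal follows essentially the same path as the paper: rerun the functional equation and Laplace inversion from arbitrary initial state $x$ (the paper does exactly this in Appendix~\ref{subsec:startingpoint}), obtain $\pii^x$ as a contour integral against a kernel $h^x$ that reduces to $g$ at $x=0$, redo the saddle-point/residue analysis of Theorem~\ref{thm:main} with $h^x$, and read the Martin-kernel limits off as ratios $C_i(x)/C_i(0)$. Your explicit ratio computations agree with the paper's formulas \eqref{eq:C1z0}--\eqref{eq:C23z0}, and your identification of the three regimes and the collapse of the pole ranges to single boundary points matches Corollary~\ref{cor:asympt}. One small caveat: your minimality argument is circular as written — inferring from the Choquet representation that the measure for a function $f$ with $0\leq f\leq h^\alpha$ is concentrated at the boundary point "producing $h^\alpha$" presupposes that this boundary point is minimal, which is the very thing to be shown. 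The paper is no more detailed here (it simply asserts minimality), so this is a shared gap rather than a divergence; a complete argument would, for instance, use Abelian/Tauberian comparison between $\int k^x_y\,m(\mathrm{d}y)$ and the saddle-point asymptotics along a ray to show any representing measure is a point mass, or appeal to a known classification of minimal harmonic functions satisfying the oblique Neumann problem.
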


\begin{proof}
To find the Martin boundary, it is sufficient to study the limits of the Martin kernel $k^x_y$ when $y\to\infty$ in each direction.
{Combining the results} in Corollary \ref{cor:asympt} and Appendix \ref{subsec:startingpoint} provides the asymptotics of $\pii^x(y)$, {that is,} the Green's function of the process starting from $x$. {It also implies the following two limits}.
{Firstly, if} $ \rems{\alpha_1}\leqslant 0$, then
$$
\lim_{\rho\to\infty} k^x_y =
\begin{cases}
 C_1(x) / C_1(0)
  & \text{if } 0< \alpha < \alpha_0,
 \\
   C_3(x) / C_3(0) & \text{if } \alpha_0 \leqslant \alpha <\pi.
\end{cases}
$$
{Secondly, if}  $ \rems{\alpha_1} > 0$, then
$$
\lim_{\rho\to\infty} k^x_y =
\begin{cases}
  C_1(x) / C_1(0) & \text{if }  \alpha_1 <\alpha < \alpha_0,
 \\
   C_2(x) / C_2(0) & \text{if }  0<\alpha \leqslant \alpha_1,
 \\
   C_3(x) / C_3(0) & \text{if } \alpha_0 \leqslant \alpha <\pi .
\end{cases}
$$
The constants $C_1(x)$, $C_2(x)$ and $C_3(x)$ are given by \eqref{eq:C1z0} and \eqref{eq:C23z0} in Appendix \ref{subsec:startingpoint}.
It is straightforward to verify that each of these functions are positive harmonic. They are also minimal. 
{We have thus provided all of the} harmonic functions of the
Martin compactification.
The Martin boundary coincides with the minimal Martin boundary and is homeomorphic to $[0,\alpha_0]$ if  $ \rems{\alpha_1}\leqslant 0$ and {is homeomorphic to} $[\alpha_1,\alpha_0]$ if  $ \rems{\alpha_1}> 0$.
\end{proof}

\begin{remark}
Proposition \ref{prop:martin} {gives} a similar result {to that} obtained in the discrete case for reflected random walks in the half plane \cite[Theorem 2.3]{kurkova_martin_1998}. 
The work of Ignatiouk-Robert \cite{ignatiouk-robert2010} states that the $t$-Martin boundary of a reflected random walk in a half-space is not stable. {It would be worthy to study this problem} in the case of reflected Brownian motion.
\end{remark}

\appendix
\section{Generalization of parameters}
\label{appendix:generalization}
\noindent The calculations in the main text were simplified by letting
$B(t)+\mu t $ be a two-dimensional Brownian motion with \textit{identity} covariance matrix and \textit{initial state} $(0,0)$. In Section~\ref{subsec:covmatric}, we show that the results of the present paper may be easily generalized to the case of a general covariance matrix $\Sigma$. In Section~\ref{subsec:startingpoint}, it is shown that our results may be generalized to the choice of any starting point $z_0$. As in the main text of the paper, we shall restrict our focus to $\mu_2< 0$; however, the same methodology shall apply to the case where $\mu_2\geq0$, as we shall show in  Section~\ref{subsec:driftpos}.

\subsection{Generalization to arbitrary covariance matrix}
\label{subsec:covmatric}

Let $\widetilde Z =(\widetilde Z_1,\widetilde Z_2) $ to be a reflected Brownian motion in the half-plane with  
covariance matrix
\begin{equation*}
\Sigma = \begin{pmatrix}
    \sigma_{11}      &  \sigma_{12}  \\ 
    \sigma_{12}  & \sigma_{22}
\end{pmatrix},
\end{equation*}
a drift $\widetilde \mu$, and a reflection vector $\widetilde{R} = (\widetilde{r},1).$ Let its occupancy density be denoted by $\widetilde \pii$. Consider the linear transformation given by $$T=\begin{pmatrix}
     \sqrt{\frac{\sigma_{11}\sigma_{22}}{\det \Sigma}}    &  0  \\ 
  \frac{-\sigma_{12}}{\sqrt{\sigma_{22}\det \Sigma}}   &  \frac{1}{\sqrt{\sigma_{22}}}
\end{pmatrix},$$ which satisfies $T \Sigma T^\top = Id $. Then $Z:=  \widetilde Z T\, $ is a reflected Brownian motion in the half-plane with identity covariance matrix, drift $\mu= \widetilde\mu T$, and reflection vector $R=\sqrt{\sigma_{22}} \widetilde R T=(r,1)$.
By a change of variables,  
we have that for all $\widetilde z\in\mathbb{R}\times\mathbb{R}_+$
\begin{equation}\label{equivalence}
\widetilde \pii (\widetilde z) =|\det T | \pii (\widetilde z T).
\end{equation} 
From equation \eqref{equivalence}, we may immediately derive the asymptotics of $\widetilde \pii$ from those of $\pii$.

\subsection{Initial state $x$}
\label{subsec:startingpoint}

In lieu of the initial state $(0,0)$, we now consider 
an arbitrary initial point $x=(x_1,x_2)\in\mathbb{R}\times \mathbb{R}_+$. We have 
$
Z(t)=x+B(t)+\mu t+ R \ell(t) 
$
where the local time of RBM on the abscissa is now
$$
\ell(t):= - \underset{0 \leqslant s \leqslant t}{\inf} {\{ 0\wedge (x_2+B_2 (s) +\mu_2 s)} \}.
$$
Recall Proposition \ref{prop:functionaleq}. The corresponding kernel functional equation to that of \eqref{eq:functional_eq} is
$$
0=e^{\theta\cdot x}+ Q(\theta)f(\theta) + (R \cdot \theta) g(\theta_1).
$$
The corresponding equation to that of \eqref{eq:value:g} is then
$$
g(\theta_1)
=\frac{-e^{(\theta_1,\Theta_2^- (\theta_1))\cdot x}}{r\theta_1+\Theta_2^- (\theta_1)}.
$$
Similarly to Proposition \ref{lem:inverselaplace},
we obtain 
$$
\pii^{x}(z_1,z_2)=
\frac{1}{i\pi} \int_{\epsilon-i\infty}^{\epsilon+i\infty} \left(e^{(\theta_1,\Theta_2^+ (\theta_1))\cdot x}-\frac{r\theta_1+\Theta_2^+ (\theta_1) }{r\theta_1+\Theta_2^-(\theta_1)} e^{(\theta_1,\Theta_2^- (\theta_1))\cdot x}\right)  \frac{ e^{-z_1\theta_1-z_2\Theta_2^+(\theta_1)}}{(\Theta_2^+(\theta_1)- \Theta_2^-(\theta_1))}    \mathrm{d}\theta_1
.
$$
Theorem \ref{thm:main} and Corollary \ref{cor:asympt} remain valid but with different constants depending of the starting point $x$. We obtain
\begin{equation}
C_1(x)= \sqrt{\frac{-2}{\pi  S''(\theta^\alpha_1)}} 
\left(e^{{\theta}^\alpha\cdot x}-\frac{R\cdot {\theta}^\alpha}{R\cdot \widetilde{\theta}^\alpha} e^{\widetilde{\theta}^\alpha\cdot x}\right)
\frac{ 1}{(\theta_2^\alpha- \widetilde{\theta}_2^\alpha)},
\label{eq:C1z0}
\end{equation}
and
\begin{equation}
C_2(x)= 2 (1+r^2) \,\frac{(r^2-1)\mu_2 - 2r\mu_1}{ r\mu_2-\mu_1 } e^{\widetilde{\theta}^p \cdot x}, 
\quad
C_3(x)=  \frac{2\mu_2}{\mu_1 -r\mu_2} 
,
\label{eq:C23z0}
\end{equation}
where 
$\widetilde{\theta}^p:=(\theta^p_1,
 \Theta_2^-(\theta^p_1) )$. Note that for $i\in\{1,2,3\}$ we have $C_i(0)=C_i$. 


\subsection{Case $\mu_2\geq 0$}
\label{subsec:driftpos}
We have assumed throughout that the inequality in \eqref{eq:conddriftneg} holds. 
We may use the exact same methodology we have developed for the case $\mu_2<0$ for the case $\mu_2>0$ or $\mu_2=0$. As the following results are obtained using straightforward calculations, the details are left to the reader. For $\mu_2>0$, we have the following:

\begin{enumerate}[label=(\roman*)]
\item The equality in \eqref{eq:value:g} remain valid and gives 
the value of the function $g$. 
However, $0$ is no longer a pole and the pole $\theta_1^p$ is negative if $\rems{R\cdot \theta^+}>0$. 
\item The asymptotics of $\nu_1$ are given by
\begin{equation*}
\nu_1 (z_1) 
\underset{z_1\to +\infty}{\sim}
\begin{cases}
A e^{-\theta_1^p z_1} & \text{if } \rems{R\cdot \theta^+}>0,
\\
B z_1^{-\frac{1}{2}} e^{-\theta_1^+ z_1} & \text{if } \rems{R\cdot \theta^+}=0,
\\
C z_1^{-\frac{3}{2}} e^{-\theta_1^+ z_1} & \text{if } \rems{R\cdot \theta^+}<0,
\end{cases}
\end{equation*}
and by 
$$
\nu_1 (z_1) \underset{z_1\to -\infty}{\sim} 
\begin{cases}
D e^{\theta_1^p z_1} & \text{if } \rems{R\cdot \theta^-}>0,
\\
E (-z_1)^{-\frac{1}{2}} e^{-\theta_1^- z_1} & \text{if } \rems{R\cdot \theta^-}=0,
\\
F (-z_1)^{-\frac{3}{2}} e^{-\theta_1^- z_1} & \text{if }\rems{R\cdot \theta^-}<0.
\end{cases}
$$
\rems{where $\rems{R\cdot \theta^- = r\theta_1^- - \mu_2}$ with $\theta^-:=(\theta_1^-,\Theta_2^-(\theta_1^-))=(\theta_1^-,-\mu_2)$.}
\item The asymptotics of $\pii$ are given by
$$
\pii(\rho ,\alpha) \underset{\rho\to\infty}{\sim} 
\begin{cases}
 C_1{\rho^{-\frac{1}{2}}} e^{-\rho \theta^\alpha \cdot e_\alpha} & \text{if } 0 \leqslant \theta^\alpha_1  < \theta^p_1 
 \text{ or }
 \theta^p_1< \theta^\alpha_1  \leqslant 0   
 \text{ or } (\rems{R\cdot \theta^+}\leqslant 0 \text{ and } \rems{R\cdot \theta^-}\leqslant 0),
 \\
 C_2 e^{-\rho \theta^p \cdot e_\alpha} & \text{otherwise}.
\end{cases}
$$
\end{enumerate}
Similar results hold for $\mu_2=0$.

\section{Technical lemmas} \label{AppendixB}
\begin{lem}
We have that
\begin{equation}
Z_{1} (t) \leq (\mu_{1} + r \mu_{2}^{-})t + B_{1}(t) + |r| \sup_{0 \leq s \leq t} |B_{2}(s)|.  \label{eq2}
\end{equation}
If $(\mu_{1} + r \mu_{2}^{-})<0$ is verified then we have $Z_1(t)\to -\infty$ for $t\to\infty$.
\label{lem:transient}
\end{lem}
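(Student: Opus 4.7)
The plan is to first establish the explicit inequality and then to deduce the transience statement as a consequence of the strong law of large numbers for Brownian motion.

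To obtain the inequality, I would start from the decomposition $Z_1(t) = B_1(t) + \mu_1 t + r\ell(t)$ and look to control the reflection term $r\ell(t)$. The key auxiliary step is to show the two-sided bound
\[
|\ell(t) - \mu_2^{-}t| \leqslant \sup_{0\leqslant s \leqslant t}|B_2(s)|.
\]
For the upper bound, I would use the inequality $\inf(f+g) \geqslant \inf f + \inf g$ in the definition $\ell(t) = -\inf_{0\leqslant s\leqslant t}(B_2(s)+\mu_2 s)$, together with the easy identity $\inf_{0\leqslant s\leqslant t}\mu_2 s = -\mu_2^{-}t$. For the lower bound, I would evaluate the infimum at $s=t$ when $\mu_2<0$ (giving $\ell(t) \geqslant \mu_2^{-}t - B_2(t)$) and use the trivial bound $\ell(t)\geqslant 0$ when $\mu_2\geqslant 0$ (in which case $\mu_2^{-}=0$). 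Once this two-sided bound is established, one obtains $r\ell(t) \leqslant r\mu_2^{-}t + |r|\sup_{s\leqslant t}|B_2(s)|$ regardless of the sign of $r$, since $|r(\ell(t)-\mu_2^{-}t)| \leqslant |r|\sup|B_2(s)|$. Substituting back into the formula for $Z_1(t)$ yields \eqref{eq2}.

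For the transience statement, I would divide both sides of \eqref{eq2} by $t$ and take $t \to \infty$. The strong law for Brownian motion gives $B_1(t)/t \to 0$ almost surely, and the reflection-principle estimate $\sup_{0\leqslant s\leqslant t}|B_2(s)| \stackrel{d}{=} |B_2(t)|$ (up to constants) combined with $|B_2(t)|/t \to 0$ yields $\sup_{0\leqslant s\leqslant t}|B_2(s)|/t \to 0$ almost surely as well. Hence the right-hand side of \eqref{eq2} divided by $t$ tends to $\mu_1 + r\mu_2^{-} < 0$, forcing the right-hand side itself, and therefore $Z_1(t)$, to tend to $-\infty$ almost surely.

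I do not expect any real obstacle in this proof; it is a straightforward sample-path argument. The only mild subtlety is ensuring the correct sign handling when $r$ can be negative, which is precisely why the two-sided control $|\ell(t)-\mu_2^{-}t| \leqslant \sup|B_2(s)|$ is needed rather than a one-sided bound on $\ell(t)$ alone.
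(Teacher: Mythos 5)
Your proof is correct and takes essentially the same route as the paper: bound $\ell(t)$ above and below by $\mu_{2}^{-}t$ plus and minus a running supremum of $B_2$, then handle the sign of $r$ (the paper does this via an explicit case split on $r\geqslant 0$ versus $r<0$, while you absorb it into the single bound $|r|\,|\ell(t)-\mu_{2}^{-}t|\leqslant |r|\sup_{0\leqslant s\leqslant t}|B_2(s)|$, which is a cleaner packaging of the same estimate, and you also make the transience step explicit, which the paper leaves implicit). One small imprecision: the reflection principle gives $\sup_{0\leqslant s\leqslant t} B_2(s) \stackrel{d}{=} |B_2(t)|$, not $\sup_{0\leqslant s\leqslant t}|B_2(s)|$, but the needed fact $\sup_{0\leqslant s\leqslant t}|B_2(s)|/t\to 0$ a.s.\ still holds, e.g.\ by the law of the iterated logarithm or by writing $\sup|B_2|\leqslant \sup B_2+\sup(-B_2)$, so your conclusion is unaffected.
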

\begin{proof}
By the definition of $\ell(t)$, 
\begin{eqnarray*}
\ell(t) = \sup_{0 \leq s \leq t} \left(-B_{2}(s) - \mu_2 s\right)
\leq \sup_{0 \leq s \leq t} \left(-B_{2}(s)\right) + \sup_{0 \leq s \leq t} \left( - \mu_2 s\right)
= \sup_{0 \leq s \leq t} \left(-B_{2}(s)\right) + \mu_{2}^{-} t,
\end{eqnarray*}
and
\begin{eqnarray*}
\ell(t) = \sup_{0 \leq s \leq t} \left(-B_{2}(s) - \mu_2 s\right)
\geq \inf_{0 \leq s \leq t} \left(-B_{2}(s)\right) + \sup_{0 \leq s \leq t} \left( - \mu_2 s\right)
= \inf_{0 \leq s \leq t} \left(-B_{2}(s)\right) + \mu_{2}^{-} t.
\end{eqnarray*}
Together with the definition of $Z_{1}(t)$, we have
\begin{equation}
Z_{1} (t) \leq \begin{cases}
(\mu_{1} + r\mu_{2}^{-})t + B_{1}(t) + r \sup_{0 \leq s \leq t} \left(-B_{2}(s)\right), & \quad \text{if $r \geq 0$}, \\
(\mu_{1} + r \mu_{2}^{-})t + B_{1}(t) + r \inf_{0 \leq s \leq t} \left(-B_{2}(s)\right), & \quad \text{if $r < 0$}.
\end{cases}  \label{eq1}
\end{equation}
The inequality in \eqref{eq2} now immediately follows from \eqref{eq1}. 
\end{proof}

\begin{lem}
The saddle point method gives
\begin{equation}\label{saddleequation}
\int_{\gamma_\alpha} e^{-\rho S(\theta_1)}g(\theta_1) \mathrm{d}\theta_1
\underset{\rho\to\infty}{\sim}
i \sqrt{\frac{-2\pi}{\rho S''(\theta^\alpha_1)}}  e^{-\rho S(\theta_1^\alpha)}g(\theta_1^\alpha).
\end{equation}
\label{lem:saddlepoint}
\end{lem}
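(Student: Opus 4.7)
The plan is to apply the classical method of steepest descent. By definition $\gamma_\alpha$ is the component of the level curve $\{\Im S=0\}$ through the saddle $\theta_1^\alpha$. Since $S'(\theta_1^\alpha)=0$ and $S''(\theta_1^\alpha)$ is real and negative, $\gamma_\alpha$ meets the real axis orthogonally at $\theta_1^\alpha$, so locally it coincides with the vertical segment $\{\theta_1^\alpha+it : |t|\leqslant\delta\}$ for $\delta>0$ small, oriented upward. Moreover, along $\gamma_\alpha$ the real part $\Re S$ attains its minimum exactly at $\theta_1^\alpha$.

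The first step would be to split $\int_{\gamma_\alpha}=\int_{\gamma_\alpha^\delta}+\int_{\gamma_\alpha\setminus\gamma_\alpha^\delta}$, where $\gamma_\alpha^\delta$ is the local vertical piece. On $\gamma_\alpha^\delta$, Taylor-expand both factors:
$$S(\theta_1^\alpha+it)=S(\theta_1^\alpha)-\tfrac{1}{2}S''(\theta_1^\alpha)\,t^{2}+O(t^{3}),\qquad g(\theta_1^\alpha+it)=g(\theta_1^\alpha)+O(t).$$
The expansion of $g$ is justified because, in the regime considered by the main proof, $\theta_1^\alpha$ lies in $(\theta_1^-,\theta_1^+)\setminus\{0,\theta_1^p\}$, so by Lemma~\ref{lem:MGFexplicit} the function $g$ is holomorphic in a neighborhood of $\theta_1^\alpha$. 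Substituting and using $d\theta_1=i\,dt$, the local contribution becomes
$$i\,e^{-\rho S(\theta_1^\alpha)}g(\theta_1^\alpha)\int_{-\delta}^{\delta}e^{\frac{\rho}{2}S''(\theta_1^\alpha)\,t^{2}}\bigl(1+O(t)\bigr)\,dt.$$
The standard rescaling $t=s/\sqrt{\rho}$ turns this into a truncated Gaussian integral which, as $\rho\to\infty$, converges to
$$i\,e^{-\rho S(\theta_1^\alpha)}g(\theta_1^\alpha)\cdot\frac{1}{\sqrt{\rho}}\int_{-\infty}^{\infty}e^{\frac{1}{2}S''(\theta_1^\alpha)s^{2}}\,ds=i\sqrt{\frac{-2\pi}{\rho\,S''(\theta_1^\alpha)}}\,e^{-\rho S(\theta_1^\alpha)}g(\theta_1^\alpha),$$
which is exactly \eqref{saddleequation}.

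The main obstacle, and the only genuinely non-routine part, is controlling the remainder on $\gamma_\alpha\setminus\gamma_\alpha^\delta$. Since $\gamma_\alpha$ is a steepest-descent path, $\Re S$ is strictly increasing as one moves away from $\theta_1^\alpha$ along $\gamma_\alpha$; a quadratic lower bound furnishes a constant $c>0$ with $\Re S(\theta_1)\geqslant S(\theta_1^\alpha)+c$ on $\gamma_\alpha\setminus\gamma_\alpha^\delta$. Combined with the polynomial decay of $g$ at infinity given by \eqref{eq:value:g} (namely $g(\theta_1)=O(|\theta_1|^{-1/2})$ along the tails, since the square-root term dominates), this bound yields an $O(e^{-\rho(S(\theta_1^\alpha)+c)})$ estimate for the tail, which is exponentially smaller than the $\rho^{-1/2}e^{-\rho S(\theta_1^\alpha)}$ contribution from the saddle. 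The required global geometry of $\gamma_\alpha$ — in particular, the fact that it remains in the analytic domain of $g$ and escapes to infinity with $\Re S\to\infty$ — can be read off from the branch-point structure of $\Theta_2^+$ at $\theta_1^\pm$, or else invoked directly from \cite[\S 4]{fedoryuk_asymptotic_1989}.
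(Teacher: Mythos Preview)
Your argument follows the same strategy as the paper: localize near the saddle, Taylor-expand, reduce to a Gaussian, and show the remainder is exponentially negligible because $\Re S$ increases along the steepest-descent path. One point needs care: you assert that $\gamma_\alpha$ ``locally coincides with the vertical segment $\{\theta_1^\alpha+it:|t|\leqslant\delta\}$''. In fact $\gamma_\alpha$ is only \emph{tangent} to the vertical at $\theta_1^\alpha$; the level curve $\{\Im S=0\}$ is generically curved (cf.\ Figure~\ref{fig:contourcol}). Your computation is therefore really a computation on the tangent line rather than on $\gamma_\alpha$, so strictly speaking you must either (a) deform from $\gamma_\alpha$ to the vertical segment and bound the difference, or (b) parametrize $\gamma_\alpha$ by some $\gamma(t)$ and expand $S(\gamma(t))$ rather than $S(\theta_1^\alpha+it)$. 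The paper's rigorous version takes route (b): it parametrizes $\gamma:[-1,1]\to\gamma_\alpha$ with $\gamma(0)=\theta_1^\alpha$, $\gamma'(0)=i$, and then invokes a Morse-type change of variables $S(\gamma(t))-S(\theta_1^\alpha)=u(t)^2$ to reduce \emph{exactly} to a Gaussian, sidestepping the tangent-vs-contour issue. A second minor point: in the paper $\gamma_\alpha$ is a \emph{bounded} arc (the unbounded tails belong to $\Gamma_\alpha$ and are handled in Lemma~\ref{lem:intnegligeable}), so your discussion of the decay of $g$ at infinity along $\gamma_\alpha$ is unnecessary --- boundedness of $g$ on the compact arc $\gamma_\alpha\setminus\gamma_\alpha^\delta$ already gives the $O(e^{-\rho(S(\theta_1^\alpha)+c)})$ tail bound.
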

\begin{proof}
The reader may consult \cite[\S 4 (1.53)]{fedoryuk_asymptotic_1989} for details about the saddle point method. We first offer a heuristic proof of the Lemma, which we then follow with a formal proof. The main contribution to the integral in \eqref{saddleequation} is in the saddle point $\theta_1^\alpha$. For some $\delta>0$, the curve $\gamma_\alpha$ can be replaced by its tangent $[\theta_1^\alpha-i\delta,\theta_1^\alpha+i\delta]$. The Taylor series of $S$ is
$$
S(\theta_1^\alpha +it)=S(\theta_1^\alpha)- \frac{S''(\theta_1^\alpha)}{2} t^2 +o(t^2).
$$ 
We may proceed to calculate
\begin{align*}
\int_{\gamma_\alpha} e^{-\rho S(\theta_1)}g(\theta_1) \mathrm{d}\theta_1
& \underset{\rho\to\infty}{\sim} g(\theta_1^\alpha) \int_{-i\delta}^{i\delta}  e^{-\rho S(\theta_1)} \mathrm{d}\theta_1,
\\
& \underset{\rho\to\infty}{\sim} g(\theta_1^\alpha)  e^{-\rho S(\theta_1^\alpha)} \int_{-\delta}^{\delta}  e^{\rho \frac{ S''(\theta_1^\alpha)}{2} t^2 } i \mathrm{d}t,
\\
&\underset{\rho\to\infty}{\sim} i g(\theta_1^\alpha) e^{-\rho S(\theta_1^\alpha)} \sqrt{\frac{-2}{S''(\theta_1^\alpha) \rho}} \underbrace{\int_{-\infty}^{\infty}  e^{-u^2} \mathrm{d}u }_{=\sqrt{\pi}}, 
\\
&\underset{\rho\to\infty}{\sim}
i \sqrt{\frac{-2\pi}{\rho S''(\theta^\alpha_1)}}  e^{-\rho \theta^\alpha \cdot e_\alpha}g(\theta_1^\alpha).
\end{align*}

We now offer a rigorous proof. For $\Im S(\theta_1)=0$, there are two level curves which are orthogonal and which intersect at the saddle point $\theta_1^\alpha$. These curves are the curves of ``steepest descent'' of $\Re S(\theta_1)$. One of them the abscissa, namely $[\theta_1^-,\theta_1^+]$. The other curve, which we call $\gamma_\alpha$, is orthogonal to the abscissa in $\theta_1^\alpha$. Let $\gamma(t):[-1,1] \to \gamma_\alpha$  be a parametrization of $\gamma_\alpha$ such that $\gamma(0)=\theta_1^\alpha$ and $\gamma'(0)=i$.\,
Noting that  $S'(\gamma(0))=S'(\theta_1^\alpha)=0$,
the Taylor series expansion of $S$ is 
$$S(\gamma(t))-S(\gamma(0))=\frac{t^2}{2}(\gamma'(0))^2 S''(\gamma(0))+o(t^2) = -\frac{t^2}{2}S''(\theta_1^\alpha) (1+o(1)). $$ 
Since $S''(\theta_1^\alpha)<0$, there exists a $\mathcal{C}^1$-diffeomorphic 
function $u$ defined in a neighborhood of $0$ such that
$$S(\gamma(t))-S(\theta_1^\alpha)=-\frac{t^2}{2}S''(\theta_1^\alpha)+o(t^2) = u^2(t).$$ 
The yields that
$$
u(t)=t\sqrt{\frac{-S''(\theta_1^\alpha)}{2}}+o(t).
$$
Note that $u(-1)<0$ and $u(1)>0$. Let the inverse of $u$ be $v=u^{-1}$. Then $v(0)=0$ and 
\begin{equation*}
v'(0)=\frac{1}{u'(0)}=\sqrt{\frac{-2}{S''(\theta_1^\alpha)}}.
\end{equation*}
We proceed to calculate
\begin{align*}
\int_{\gamma_\alpha} e^{-\rho S(\theta_1)}g(\theta_1) \mathrm{d}\theta_1
&= \int_{-1}^{1}  e^{-\rho S(\gamma(t))}g(\gamma(t)) \gamma'(t)\mathrm{d}t
\\
&= e^{-\rho S(\theta_1^\alpha)} \int_{-1}^{1}  e^{-\rho u^2(t)}g(\gamma(t)) \gamma'(t) \mathrm{d}t
\\
& \text{with a change of variables} \quad u(t)=s \text{ and } t=v(s)
\\
&= e^{-\rho S(\theta_1^\alpha)} \int_{u(-1)}^{u(1)}  e^{-\rho s^2}g(\gamma(v(s))) \gamma'(v(s)) v'(s) \mathrm{d}s
\\
&\underset{\rho\to\infty}{\sim}
 e^{-\rho S(\theta_1^\alpha)} 
  \underbrace{g(\gamma(v(0))) }_{=g(\theta_1^\alpha)}
  \underbrace{\gamma'(v(0)) }_{=i}
 \underbrace{v'(0)}_{=\sqrt{\frac{-2}{S''(\theta_1^\alpha)}}}
   \underbrace{\int_{u(-1)}^{u(1)}  e^{-\rho s^2} \mathrm{d}s}_{\sim\sqrt{\frac{\pi}{\rho}}}
\\
&\underset{\rho\to\infty}{\sim}
i \sqrt{\frac{-2\pi}{\rho S''(\theta^\alpha_1)}}  e^{-\rho S(\theta^\alpha)}g(\theta_1^\alpha).
\end{align*}
\end{proof}
\begin{lem}
For some $c_1>0$, $c_2>0$, and $B>0$, the following two statements hold:
\begin{enumerate}
\item \label{1} $\underset{a\in[\theta_1^+,\theta_1^-]}{\inf} \Re S(a+ib) \geqslant -c_1 + c_2 \sin\alpha |b|$ for all $|b| \geqslant B$,
\item \label{2} For fixed $a\in[\theta_1^+,\theta_1^-]$, the function $b\mapsto \Re S(a+ib)$ is increasing on $[0,\infty)$ and decreasing on $(-\infty,0]$.
\item \rems{For $\delta>0$ sufficiently small and for all $|b|\leqslant \delta$, we have,  for some $C_2>0$, $Re(S(ib)) \geqslant S(0) + C_2 |b|^2$.}
\end{enumerate}
\label{lem:technique}
\end{lem}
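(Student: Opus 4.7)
All three items concern the real part of $S(a+ib) = (a+ib)\cos\alpha + \Theta_2^+(a+ib)\sin\alpha$, so they reduce to analyzing $\Re\Theta_2^+(a+ib) = -\mu_2 + \Re\sqrt{w(a,b)}$, where
\[
w(a,b) := (\mu_1^2+\mu_2^2) - (a+\mu_1+ib)^2 = u + iv,
\]
with $u = (\mu_1^2+\mu_2^2) - (a+\mu_1)^2 + b^2$ and $v = -2(a+\mu_1)b$. For $a$ in the closed interval between $\theta_1^-$ and $\theta_1^+$ one has $(a+\mu_1)^2 \leq \mu_1^2+\mu_2^2$, so $u\geq b^2\geq 0$. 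Taking the principal branch, $\Re\sqrt{u+iv}=\sqrt{(|w|+u)/2}$, which I will use throughout.

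For item (1) I would observe that $|w|+u \geq 2u \geq 2b^2$, hence $\Re\sqrt{w} \geq |b|$. Substituting into $\Re S(a+ib) = a\cos\alpha - \mu_2\sin\alpha + \sin\alpha\cdot \Re\sqrt{w}$ and bounding $a\cos\alpha - \mu_2\sin\alpha$ from below by a finite constant $-c_1$ (valid on the bounded $a$-interval) yields the claim with $c_2=1$ and any $B>0$. For item (2), set $c := (\mu_1^2+\mu_2^2) - (a+\mu_1)^2\geq 0$: then $u = c+b^2$ and $|w|^2 = (c+b^2)^2 + 4(a+\mu_1)^2 b^2$ are both manifestly increasing in $b^2$. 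Hence so is $|w|+u$ and thus $\Re\sqrt{w}$, which gives the required monotonicity of $b\mapsto \Re S(a+ib)$ once multiplied by $\sin\alpha>0$.

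For item (3) I would Taylor expand at $b=0$ with $a=0$. Writing $z := b^2 - 2i\mu_1 b$ and using $\sqrt{\mu_2^2+z} = |\mu_2|\bigl(1 + z/(2\mu_2^2) - z^2/(8\mu_2^4)\bigr) + O(|z|^3)$ together with $\Re z = b^2$ and $\Re z^2 = b^4 - 4\mu_1^2 b^2$, one finds
\[
\Re\sqrt{\mu_2^2+z} = |\mu_2| + \frac{\mu_1^2+\mu_2^2}{2|\mu_2|^3}\,b^2 + O(|b|^3).
\]
Since $S(0) = -2\mu_2\sin\alpha$, this gives $\Re S(ib) - S(0) = \frac{(\mu_1^2+\mu_2^2)\sin\alpha}{2|\mu_2|^3}\,b^2 + O(|b|^3)$, so any $0<C_2<\frac{(\mu_1^2+\mu_2^2)\sin\alpha}{2|\mu_2|^3}$ works for $\delta$ sufficiently small. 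The only mildly subtle point throughout is the branch of the square root: the identity $\Re\sqrt{u+iv}=\sqrt{(|w|+u)/2}$ holds for the principal branch whenever $|w|+u\geq 0$, which is automatic here since $u\geq 0$ in the regime considered; no further obstacle is expected.
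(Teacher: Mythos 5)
Your proposal is correct and follows exactly the route the paper indicates: start from $\Re S(a+ib)=a\cos\alpha -\mu_2\sin\alpha +\sin\alpha\,\Re\sqrt{(\mu_1^2+\mu_2^2)-(a+\mu_1+ib)^2}$ and carry out the elementary real-part analysis (the paper itself only writes this formula and refers to Lemma~19 of \cite{franceschi_asymptotic_2016} for the ``straightforward calculus''). You have supplied the details the paper omits; the key identity $\Re\sqrt{u+iv}=\sqrt{(|w|+u)/2}$, the bound $|w|+u\geq 2u\geq 2b^2$ for item (1), the monotonicity of $|w|+u$ in $b^2$ for item (2), and the expansion $\Re\sqrt{\mu_2^2+z}=|\mu_2|+\tfrac{\mu_1^2+\mu_2^2}{2|\mu_2|^3}b^2+O(|b|^3)$ for item (3) are all correct, including the branch consideration (since $u\geq 0$ throughout, $w$ never lies on $(-\infty,0)$, so the principal-branch formula applies).
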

\begin{proof}
We first calculate 
\begin{equation*}
\Re S(a+ib)=a\cos\alpha -\mu_2\sin\alpha +\sin\alpha \Re \sqrt{\mu_1^2+\mu_2^2-(a+ib +\mu_1)^2}.
\end{equation*} 
The claimed properties then follow from straightforward calculus. For further details, we refer the reader to the proof of Lemma~19 of \cite{franceschi_asymptotic_2016}.
\end{proof}
\begin{lem}
We may choose $\Gamma_\alpha$ and $\gamma_\alpha$ such that
$$
\int_{\Gamma_\alpha} e^{-\rho S(\theta_1)}g(\theta_1) \mathrm{d}\theta_1 = o \left( \int_{\gamma_\alpha} e^{-\rho S(\theta_1)}g(\theta_1) \mathrm{d}\theta_1 \right).
$$
\label{lem:intnegligeable}
\end{lem}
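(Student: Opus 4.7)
The plan is to exploit the two bounds in Lemma~\ref{lem:technique} to show that $\Re S$ exceeds $S(\theta_1^\alpha)$ by a strictly positive amount along $\Gamma_\alpha$, thereby producing an integral that is exponentially smaller than the saddle-point contribution from $\gamma_\alpha$ given by Lemma~\ref{lem:saddlepoint}.

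First I would specify the contours precisely. Take $\gamma_\alpha$ to be the arc of the steepest-descent curve $\{\Im S(\theta_1)=0\}$ through the saddle point, restricted to $|\Im\theta_1|\leq \delta$ for some small fixed $\delta>0$. Since this curve is orthogonal to the real axis at $\theta_1^\alpha$, its two endpoints lie close to $\theta_1^\alpha\pm i\delta$. Define $\Gamma_\alpha$ to consist of the two vertical rays $\{\theta_1^\alpha+ib:|b|\geq \delta\}$ together with short horizontal connectors joining the endpoints of $\gamma_\alpha$ to the vertical rays. The connectors lie in a compact neighborhood of the saddle on which $\Re S > S(\theta_1^\alpha)$ strictly (by continuity and the strict maximality of $S(\theta_1^\alpha)$ on the vertical line), so their contribution is $O(e^{-\rho(S(\theta_1^\alpha)+\eta')})$ for some $\eta'>0$.

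Next I would establish a uniform lower bound $\Re S(\theta_1) \geq S(\theta_1^\alpha) + \eta$ on $\Gamma_\alpha$. Item~(2) of Lemma~\ref{lem:technique}, applied with $a=\theta_1^\alpha\in(\theta_1^-,\theta_1^+)$, gives that $b\mapsto \Re S(\theta_1^\alpha+ib)$ is increasing on $[0,\infty)$ and decreasing on $(-\infty,0]$, with value $\Re S(\theta_1^\alpha)=S(\theta_1^\alpha)$ at $b=0$. Setting $\eta:=\min\{\Re S(\theta_1^\alpha+i\delta),\,\Re S(\theta_1^\alpha-i\delta)\}-S(\theta_1^\alpha) > 0$, this yields $\Re S(\theta_1) \geq S(\theta_1^\alpha) + \eta$ for all $\theta_1\in\Gamma_\alpha$. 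Item~(1) of the same lemma gives the tail bound $\Re S(\theta_1^\alpha+ib) \geq -c_1 + c_2\sin\alpha \cdot |b|$ for $|b|\geq B$, providing the linear growth needed for integrability. Along $\Gamma_\alpha$, the function $g$ is analytic and bounded: by \eqref{eq:value:g}, $\theta_1^\alpha\in(\theta_1^-,\theta_1^+)$ lies strictly inside the analyticity strip, and $|g(\theta_1^\alpha+ib)|=O(1/|b|)$ as $|b|\to\infty$. Splitting $\Gamma_\alpha$ into its bounded part $\{\delta\leq|b|\leq B'\}$ and tail $\{|b|>B'\}$, one obtains
\begin{align*}
\Bigl|\int_{\Gamma_\alpha} e^{-\rho S(\theta_1)}g(\theta_1)\,\mathrm{d}\theta_1\Bigr|
&\leq M\cdot e^{-\rho(S(\theta_1^\alpha)+\eta)} + M'\int_{B'}^{\infty} e^{\rho c_1 - \rho c_2 \sin\alpha\, b}\,\mathrm{d}b \\
&\leq C\, e^{-\rho(S(\theta_1^\alpha)+\eta)}
\end{align*}
for $B'$ chosen sufficiently large and $\rho$ sufficiently large. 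Comparing with Lemma~\ref{lem:saddlepoint}, which yields $\bigl|\int_{\gamma_\alpha}\bigr|\sim c\,\rho^{-1/2}e^{-\rho S(\theta_1^\alpha)}$ with $c\neq 0$, the ratio is $o(1)$ as $\rho\to\infty$, which is the desired conclusion.

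The main technical subtlety lies in the geometric setup of the deformed contour: one must verify that the vertical ray through $\theta_1^\alpha$ avoids the cuts of $g$ (which it does, since $\theta_1^\alpha\in(\theta_1^-,\theta_1^+)$ while the cuts lie on $(-\infty,\theta_1^-]\cup[\theta_1^+,\infty)$) and that the short horizontal connectors between $\gamma_\alpha$ and the vertical rays contribute negligibly, which follows from the continuity of $\Re S$ and its strict maximality at $\theta_1^\alpha$ on the vertical line. The two geometric configurations $\theta_1^\alpha>0$ and $\theta_1^\alpha<0$ depicted in Figure~\ref{fig:contourcol} are handled by symmetric but entirely analogous arguments.
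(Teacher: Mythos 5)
Your proof is correct and rests on the same two ingredients as the paper's (the bounds in Lemma~\ref{lem:technique} and the saddle-point estimate in Lemma~\ref{lem:saddlepoint}), but the contour geometry is genuinely different. You take $\Gamma_\alpha$ to be the two \emph{infinite} vertical rays $\{\theta_1^\alpha + ib : |b|\geqslant\delta\}$ joined to the endpoints of $\gamma_\alpha$ by short connectors, whereas the paper takes $\Gamma_\alpha$ to be the six-piece ``box'' of Figure~\ref{fig:contourcollem}: the two unbounded tails of the \emph{original} line at $\Re\theta_1 = \epsilon$, two horizontal legs at height $\pm iB$, and two bounded vertical segments at $\Re\theta_1 = A$ (the common real part of $\gamma_\alpha$'s endpoints). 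Your version is cleaner in one respect: monotonicity (item~(2) of Lemma~\ref{lem:technique}) applies verbatim along a vertical ray at the fixed abscissa $\theta_1^\alpha$, and the tail integrability from item~(1) does the rest, with no intermediate height $B$ to choose. The paper's version buys something else: because $\Gamma_\alpha$ keeps the tails of the original contour, the set $\gamma_\alpha\cup\Gamma_\alpha$ differs from the line $\epsilon+i\mathbb{R}$ only on a \emph{bounded} region, so the Cauchy identity used in the proof of Theorem~\ref{thm:main} is immediate. With your choice of $\Gamma_\alpha$, you would additionally need to record (when plugging this lemma into the theorem) that the two horizontal segments from $\epsilon+iT$ to $\theta_1^\alpha+iT$ vanish as $T\to\infty$ — which indeed follows from item~(1) of Lemma~\ref{lem:technique} and the boundedness of $g$ — so that the deformation from $\epsilon+i\mathbb{R}$ to $\gamma_\alpha\cup\Gamma_\alpha$ is justified. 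Finally, a small terminological slip: $\theta_1^\alpha$ is a strict \emph{minimum} of $\Re S$ on the vertical line $\Re\theta_1=\theta_1^\alpha$ (not a maximum), which is what you actually use; the conclusion $\eta>0$ follows from this together with $S''(\theta_1^\alpha)<0$, and the bound on the short connectors follows from continuity near $\theta_1^\alpha\pm i\delta$, exactly as you indicate.
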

\begin{proof}
Note that $\gamma_\alpha$ is the contour of steepest descent. Recall that the saddle point $\theta_1^\alpha$ is a minimum of $\Re S$ on the curve $\gamma_\alpha$ and note that $\Re S$ is increasing as one moves away from $\theta_1^\alpha$. For $\delta>0$, let $A\pm i\widetilde B$ be the endpoints of $ \gamma_\alpha$ chosen such that $S(A\pm i \widetilde B)= S(\theta_1^\alpha) +\delta$.
For $B$ sufficiently large, we shall choose a contour $\Gamma_\alpha$ such that (see Figure~\ref{fig:contourcollem})
\begin{equation}\label{integraleq}
\int_{\Gamma_\alpha} = 
\int_{\epsilon-i \infty}^{\epsilon-iB}+
\int_{\epsilon-iB}^{A-i B}+
\int_{A-i B}^{A-i \widetilde B}+
\int_{A+i \widetilde B}^{A+i B}+
\int_{A+i B}^{\epsilon+iB}+
\int_{\epsilon+iB}^{\epsilon+i \infty}.
\end{equation}
\begin{figure}[hbtp]
\centering
\includegraphics[scale=0.8]{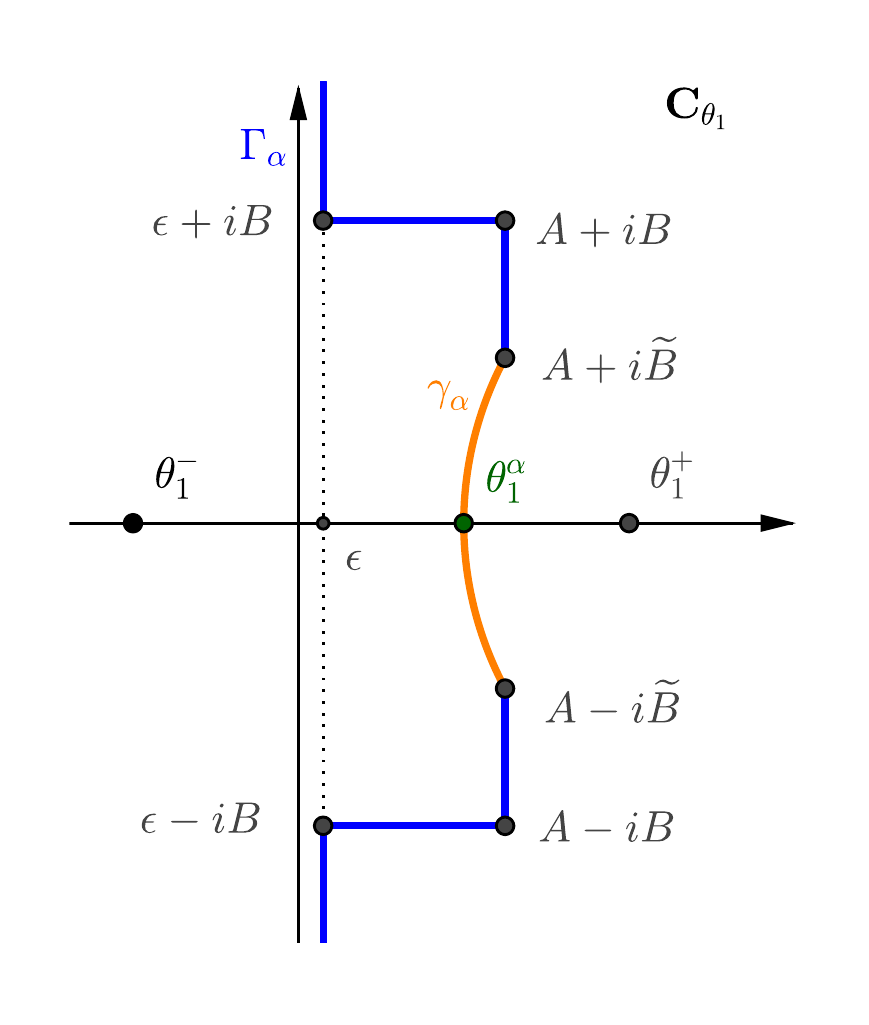}
\caption{The contour $\Gamma_\alpha$ and $\gamma_\alpha$}
\label{fig:contourcollem}
\end{figure}We now seek to show that, for some $\delta>0$, the six integrals in \eqref{integraleq} are $O(e^{-\rho (S(\theta_1^\alpha) +\delta)})$. Noting that $\Re S(\theta_1)=\Re S(\overline{\theta_1})$, it is enough to show this property for the last three integrals in \eqref{integraleq}. We first work with the third from the last integral of \eqref{integraleq}. By the \rems{second} statement in Lemma~\ref{lem:technique}, we have for all $\theta_1\in [{A+i \widetilde B},{A+i B}]$ that 
\begin{equation*}
\Re S(\theta_1)\geqslant S(A+i \widetilde B)= S(\theta_1^\alpha) +\delta. 
\end{equation*}
Then
$$
\left| \int_{A+i \widetilde B}^{A+i B}  e^{-\rho S(\theta_1)}g(\theta_1) \mathrm{d}\theta_1 \right|
\leqslant
e^{-\rho (S(\theta_1^\alpha) +\delta)}
\int_{A+i \widetilde B}^{A+i B} \left| g(\theta_1)\right| \mathrm{d}\theta_1.
$$
We continue with the second to last integral in \eqref{integraleq}. Let us consider $B$ such that 
\begin{equation*}
-c_1+c_2\sin\alpha B \geqslant S(\theta_1^\alpha) +\delta.
\end{equation*}
By the first statement in Lemma~\ref{lem:technique}, we have that for all $\theta_1\in [{A+i B},{\epsilon+i  B}]$,
\begin{equation*}
\Re S(\theta_1)\geqslant -c_1+c_2\sin\alpha B\geqslant S(\theta_1^\alpha) +\delta.
\end{equation*}
Thus
$$
\left| \int_{A+i B}^{\epsilon+iB} e^{-\rho S(\theta_1)}g(\theta_1) \mathrm{d}\theta_1 \right|
\leqslant
e^{-\rho (S(\theta_1^\alpha) +\delta)}
\int_{A+i B}^{\epsilon+iB} \left| g(\theta_1)\right| \mathrm{d}\theta_1.
$$
We now work with the final integral in \eqref{integraleq}. By the first statement of Lemma~\ref{lem:technique}, we have for all $\theta_1\in [0,\infty]$ that
\begin{equation*}
\Re S(\epsilon+iB+it)\geqslant -c_1+c_2\sin\alpha B +c_2t \geqslant S(\theta_1^\alpha) +\delta +c_2 t.
\end{equation*}
Then
$$\begin{aligned}
\left| \int_{\epsilon+iB}^{\epsilon+i \infty} e^{-\rho S(\theta_1)}g(\theta_1) \mathrm{d}\theta_1 \right|
& \leqslant 
\int_{0}^{ \infty} e^{-\rho \Re S(\epsilon+iB+it)} \left| g(\theta_1)\right| \mathrm{d}t
\\
& \leqslant 
e^{-\rho (S(\theta_1^\alpha) +\delta)}
\underbrace{
\int_{0}^{\infty} e^{-\rho c_2 t}\left| g(\epsilon+iB+it)\right| \mathrm{d}t.
}_{<\infty}
\end{aligned}$$
Combining the above results, we have
\begin{equation*}
\int_{\Gamma_\alpha} e^{-\rho S(\theta_1)}g(\theta_1) \mathrm{d}\theta_1=O(e^{-\rho (S(\theta_1^\alpha) +\delta)}).
\end{equation*}
The proof then concludes by applying Lemma~\ref{lem:saddlepoint}.
\end{proof}

\rems{The following Lemma considers the case where a pole coincides with the saddle point, a case left untreated in \cite{franceschi_asymptotic_2016}.}
\begin{lem}
If $\theta_1^\alpha=0$, then
\begin{align*}
\pii (\rho ,\alpha)
\underset{\rho\to\infty}{\sim} \text{Res}_{0}(g) e^{-\rho S(0)}.
\end{align*}
If $\theta_1^p=\theta_1^\alpha>0$, then
\begin{align*}
\pii (\rho ,\alpha)
\underset{\rho\to\infty}{\sim} -\text{Res}_{\theta_1^p}(g) e^{-\rho S(\theta_1^p)}  .
\end{align*}
\label{polesaddle}
\end{lem}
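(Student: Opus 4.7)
The plan is to repeat the contour-shifting strategy from the proof of Theorem~\ref{thm:main}, but indent the steepest descent contour $\gamma_\alpha$ with a small semicircle of radius $\eta>0$ around the pole, which here coincides with the saddle point $\theta_1^\alpha$. Writing
$$g(\theta_1) = \frac{c}{\theta_1 - \theta_1^\alpha} + \tilde g(\theta_1), \qquad c := \operatorname{Res}_{\theta_1^\alpha}(g),$$
with $\tilde g$ holomorphic near $\theta_1^\alpha$, the integral of $e^{-\rho S}\tilde g$ along $\gamma_\alpha$ is handled directly by Lemma~\ref{lem:saddlepoint} applied to the now-regular integrand, contributing only $O(\rho^{-1/2}e^{-\rho S(\theta_1^\alpha)})$, which is subdominant with respect to the expected $O(e^{-\rho S(\theta_1^\alpha)})$ leading term. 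The singular part along the indented contour $\gamma_\alpha^\eta$ splits, as $\eta\to 0$, into the principal value along $\gamma_\alpha$ plus the contribution of the semicircular arc.

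For the arc, a direct parametrization $\theta_1 = \theta_1^\alpha + \eta e^{i\phi}$ yields a contribution of $\pm i\pi c\, e^{-\rho S(\theta_1^\alpha)}$ as $\eta\to 0$, with sign depending on whether the indentation passes to the left or to the right of the pole. For the principal value, I would use the steepest-descent parametrization $S(\theta_1(u)) - S(\theta_1^\alpha) = u^2$ constructed in the proof of Lemma~\ref{lem:saddlepoint}: to leading order $\theta_1(u) - \theta_1^\alpha$ is a real positive multiple of $iu$, so the integrand becomes $c\, e^{-\rho u^2}/u$ times an analytic factor, and odd symmetry in $u$ forces the principal value to vanish at leading order, with the cubic correction to $\theta_1(u)$ producing a residual contribution of size $O(\rho^{-1/2}e^{-\rho S(\theta_1^\alpha)})$. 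Combining with Cauchy's theorem for the deformation from $\epsilon + i\mathbb{R}$ to $\gamma_\alpha^\eta \cup \Gamma_\alpha$, where $\Gamma_\alpha$ is negligible by Lemma~\ref{lem:intnegligeable}, whichever way we indent, the half-residue from the arc and the residue picked up (or not) during the deformation combine to give $\pm c\, e^{-\rho S(\theta_1^\alpha)}$ after the factor $\frac{1}{i\pi}$ is applied. For $\theta_1^\alpha = \theta_1^p > 0$ this yields
$$\pii(\rho,\alpha) \sim -\operatorname{Res}_{\theta_1^p}(g)\, e^{-\rho S(\theta_1^p)},$$
exactly half of $C_2 e^{-\rho S(\theta_1^p)}$ from Theorem~\ref{thm:main}; for $\theta_1^\alpha = 0$, an analogous computation gives half of $C_3 e^{-\rho S(0)}$, namely $\operatorname{Res}_0(g)\, e^{-\rho S(0)}$.

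The main obstacle will be careful sign bookkeeping. The two indentation conventions (left or right of the pole) give formally different intermediate decompositions --- one in which the sweep from $\epsilon + i\mathbb{R}$ to the indented contour encloses the pole and picks up a full $2\pi ic$, versus one in which it does not --- yet both must agree on the final leading-order asymptotic. Verifying this consistency is the hallmark of the half-residue rule when a simple pole lies on a steepest-descent contour and should serve as a cross-check on the computation. A secondary technical point is a clean justification that the principal value is $o(e^{-\rho S(\theta_1^\alpha)})$; this follows from the odd-symmetry argument together with a Taylor expansion of $\theta_1(u) - \theta_1^\alpha$ beyond its linear-in-$u$ part, with the remainder controlled by the same contour arguments as in Lemma~\ref{lem:intnegligeable}.
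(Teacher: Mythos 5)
Your proposal is correct but takes a genuinely different route than the paper. You use the textbook half-residue device: split $g$ into its singular part $c/(\theta_1-\theta_1^\alpha)$ plus a regular remainder $\tilde g$, indent the steepest-descent curve $\gamma_\alpha$ with a \emph{fixed} small semicircle of radius $\eta$ around the coincident pole-saddle, and identify, as $\eta\to 0$, the arc contribution $\pm i\pi c\,e^{-\rho S(\theta_1^\alpha)}$ plus a principal value; both the PV and the $\tilde g$-integral are then shown to be $O(\rho^{-1/2}e^{-\rho S(\theta_1^\alpha)})$ and hence subdominant. The paper avoids the principal-value notion entirely: it deforms onto a semicircle $\mathcal{C}_{\rho,K}$ whose radius $K/\sqrt{\rho}$ \emph{shrinks with $\rho$}, rescales by $t=\sqrt{\rho}\,\theta_1$ so that the near-pole contribution is captured by the single explicit computation $\int_{\mathcal{C}_{1,K}}t^{-1}e^{\tfrac{1}{2}S''(0)t^2}\,dt=i\pi$ (by oddness of the integrand, which lets one double up to the full circle), and then controls the remaining imaginary-axis segments via the quadratic lower bound $\Re S(ib)\geq S(0)+C_2\lvert b\rvert^2$ from Lemma~\ref{lem:technique}, with a second limit $K\to\infty$ at the end. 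Both approaches land on the half-residue value, and your final constants indeed match half of $C_2$ and $C_3$ from Theorem~\ref{thm:main}; the trade-off is that your route has a cleaner conceptual frame (residue decomposition, PV, half-residue) at the cost of a few estimates you gesture at but would need to spell out --- in particular the endpoints $u(\pm 1)$ of the steepest-descent arc are generally \emph{not} symmetric about the saddle, so the odd-symmetry cancellation only kills the symmetric core of the PV and one must separately note that the leftover one-sided piece $\int_{a}^{a'}u^{-1}e^{-\rho u^2}du$ is exponentially small --- whereas the paper's rescaled contour packages all of this into one computable integral at the price of the double limit in $(\rho,K)$. Your remark about the left-versus-right indentation conventions is well taken and is exactly the consistency check one would run to validate the sign.
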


\begin{proof}
In these two cases the pole coincides with the saddle point. 
In this case we cannot integrate on the steepest descent contour because the integral will not converge. We thus (see Figure~\ref{fig:contourpol} below) employ alternative contours of integration near the pole. We shall consider two cases of interest separately. 
\begin{figure}[h]
\centering
\includegraphics[scale=0.7]{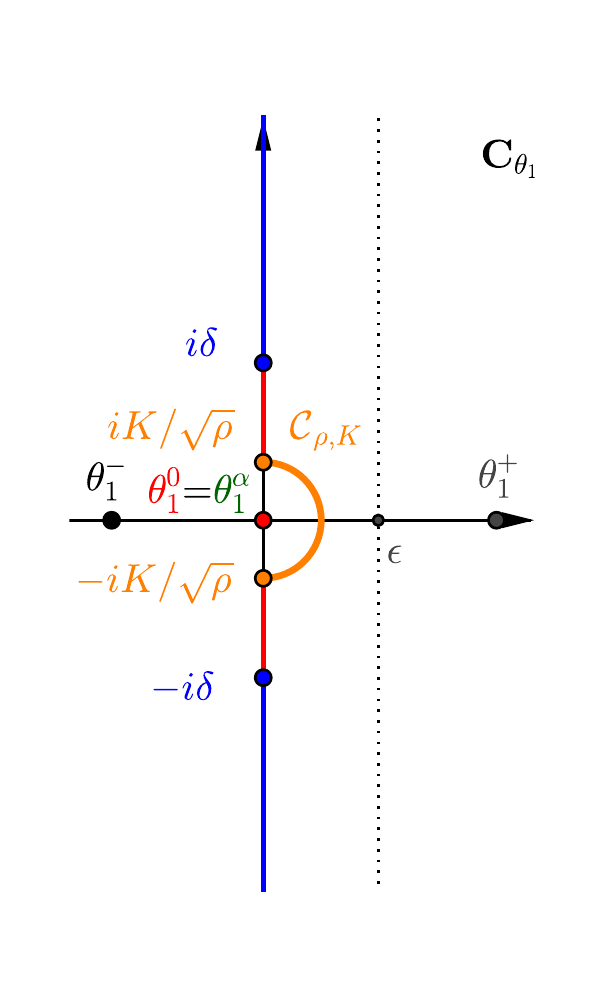}
\includegraphics[scale=0.7]{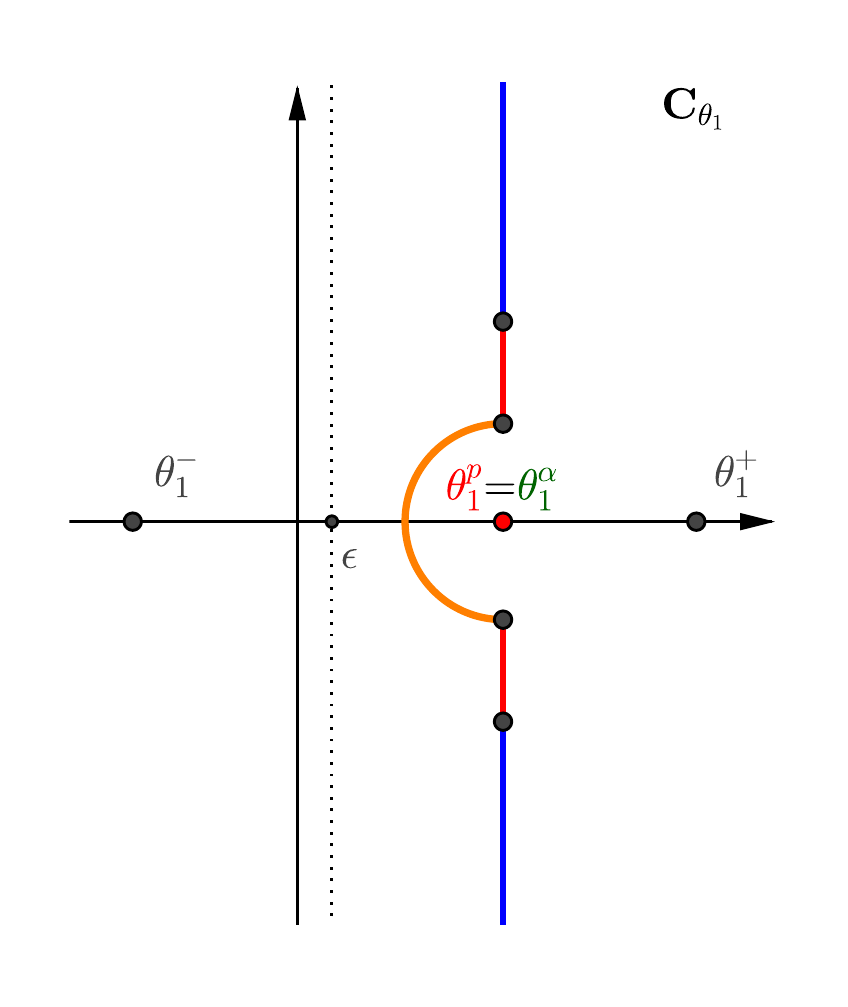}
\vspace{-0.7cm}
\caption{Shifting the contour. The left figure considers the case $\theta_1^\alpha=0$. The right figure considers the case $\theta_1^p=\theta_1^\alpha>0$.}
\label{fig:contourpol}
\end{figure}

\noindent \underline{Case I: $\theta_1^\alpha =0$.} \rems{For $K>0$}, consider the contour of integration  
\rems{\begin{equation*}
\mathcal{C}_{\rho, K} := \left\{ \theta_1\in\mathbb{C}: |\theta_1|= \frac{K}{\sqrt{\rho}} \text{ and } \Re \theta_1 \geqslant 0 \right\}
\end{equation*}}pictured in orange in Figure~\ref{fig:contourpol} below. The contour is half of \rems{a} small circle with center $0$ oriented in the positive direction. 
The Taylor series of $S$ is
\begin{equation*}
S(\theta_1)=S(0)+\frac{S'' (0)}{2} (\theta_1^2 +o(1))
\end{equation*}
In addition, 
\begin{equation*}
g(\theta_1)=\frac{\text{Res}_{0}(g)}{\theta_1} (1+o(1))
.
\end{equation*} We then have the following equivalence
\begin{align}
\int_{ \rems{\mathcal{C}_{\rho,K}}} e^{-\rho S(\theta_1)}g(\theta_1) \mathrm{d}\theta_1
& = \rems{ \text{Res}_{0}(g) e^{-\rho S(0)}   \int_{ \mathcal{C}_{\rho,K}}   e^{\rho \frac{ S''(0)}{2} \theta_1^2(1 +o(1)) } \frac{1+o(1)}{\theta_1} \mathrm{d}\theta_1 }
\\ 
&= \rems{ \text{Res}_{0}(g) e^{-\rho S(0)}   \int_{ \mathcal{C}_{1,K}}   e^{ \frac{ S''(0)}{2} t^2(1 +o(1/\sqrt{\rho})) } \frac{1+o(1/\sqrt{\rho})}{t} \mathrm{d}t }
\label{eq:equivint1}
\\ &\underset{\rho\to\infty}{\sim} \text{Res}_{0}(g) e^{-\rho S(0)}   \int_{\rems{\mathcal{C}_{1,K}}}  e^{\rho \frac{ S''(0)}{2} t^2 } \frac{1}{t}  \mathrm{d}t
=i\pi\,\text{Res}_{0}(g) e^{-\rho S(0)}.
\label{eq:equivint}
\end{align}
The equality in \eqref{eq:equivint1} comes from the change of variables \rems{$t=\sqrt{\rho}\theta_1$}. 
The last equality in \eqref{eq:equivint} comes from the fact that 
\begin{equation}\label{wholecircle}
\int_{\rems{\mathcal{C}_{1,K}}} \frac{1}{t}  e^{\rho \frac{ S''(0)}{2} t^2 } \mathrm{d}t= \frac{1}{2} \int_{\rems{|t|=K}} \frac{1}{t}  e^{\rho \frac{ S''(0)}{2} t^2 } \mathrm{d}t=i\pi, 
\end{equation}
where the equality in \eqref{wholecircle} illustrates that a change of variables enables us to integrate over the whole circle. 
Cauchy's residue theorem yields
\begin{equation}
\left( \int_{\epsilon-i\infty}^{\epsilon+i\infty}
-\int_{\rems{\mathcal{C}_{\rho,K}}}{-\int_{iK/\sqrt{\rho}}^{i\infty} -\int_{-i\infty}^{-iK/\sqrt{\rho}}}
 \right) e^{-\rho S(\theta_1)} g(\theta_1) \mathrm{d}\theta_1=
0.
\label{eq:contourdecompose}
\end{equation}
Using a similar argument in the proof of Lemma~\ref{lem:intnegligeable} yields that for $\delta>0$,
\begin{equation}
\left( \int_{i\delta}^{i\infty} +\int_{-i\infty}^{-i\delta} \right) e^{-\rho S(\theta_{1})}\, g(\theta_1)\, d\theta_{1} = o( e^{-\rho S(0)}).
\label{eq:negligeable}
\end{equation}
\rems{
For $\theta_1$ sufficiently small and for some $C_1>0$, we have $g(\theta_1)<C_1/|\theta_1|$. Invoking the third property of Lemma~\ref{lem:technique}, we have
\begin{eqnarray*}
&&\left|\int_{i K/\sqrt{\rho} }^{i \delta} e^{-\rho S(\theta_{1})}\, g(\theta_1)\, d\theta_{1} \right|
\leq 
e^{-\rho S(0)} \int_{ K/\sqrt{\rho} }^{ \delta} e^{- C_2 \rho t^2} \, \frac{C_1}{t} \,dt  
\leq  e^{-\rho S(0)} \int_{K}^{\infty} e^{-C_2 t^2} \, \frac{C_1}{t} \, dt .
\end{eqnarray*}
The same inequality holds for $\int_{-i K/\sqrt{\rho} }^{-i \delta}$.
Combining these two last inequalities with \eqref{eq:equivint}, \eqref{eq:contourdecompose} and \eqref{eq:negligeable} yields
\begin{equation*}
 \limsup_{\rho \rightarrow \infty} \left|  e^{\rho S(0)} \int_{\epsilon - i  \infty}^{\epsilon + i \infty} e^{-\rho S(\theta_{1})}\, g(\theta_1)\, d\theta_{1}  - i \pi \, \mathrm{Res}_{0}(g) \, e^{- \rho S(0)} \right| 
\leq \int_{K}^{\infty} e^{-C_2 l^2} \, \frac{C_1}{l} \, dl.
\end{equation*}
Letting $K \rightarrow \infty$,  and
recalling that by Proposition~\ref{lem:inverselaplace} we have}
\begin{equation*}
\pii(\rho ,\alpha)= \frac{1}{i\pi} \int_{\epsilon-i\infty}^{\epsilon+i\infty}  e^{-\rho S(\theta_1)} g(\theta_1) \mathrm{d}\theta_1,
\end{equation*}
the desired result follows.


\noindent \underline{Case II: $\theta_1^p=\theta_1^\alpha>0$.} The proof is identical to that of the previous case. The only difference is that we need to take into account that the orientation of the contour yields a minus sign. 

\end{proof}

\subsection*{Acknowledgments} 
We are deeply grateful to Professor J. Michael Harrison for sharing this problem with us as well as for providing some initial  ideas about it. We are also grateful \rems{to Irina Kourkova, Masakiyo Miyazawa,} and Kilian Raschel for helpful discussions about this problem. {We acknowledge, with thanks, Dongzhou Huang for helpful feedback. The first-named author thanks Rice University's Dobelman Family Junior Chair; he also gratefully acknowledges the support of ARO-YIP-71636-MA, NSF DMS-1811936, and ONR N00014-18-1-2192.}



\bibliographystyle{imsart-number} 


\bibliography{biblio2}


\end{document}